\tikzstyle{Dnode}=[draw, circle, inner sep = 0.07cm]
\tikzstyle{double line} = [
\tikzstyle{triple line} = [
\numberwithin{equation}{section}
\DeclareMathOperator{\Cone}{Cone}
\DeclareMathOperator{\PC}{PC}
\DeclareMathOperator{\Cox}{Cox}
\DeclareMathOperator{\Conv}{Conv}
\DeclareMathOperator{\indeg}{deg^{--}}
\newcommand{\defi}[1]{{\textit{#1}}}
\newcommand{\C}{{\mathbb{C}}}
\newcommand{\R}{{\mathbb{R}}}
\newcommand{\Z}{{\mathbb{Z}}}
\DeclareMathOperator{\flag}{Fl}
\def\Sn{\mathfrak{S}_{n}}
\newcommand{\Q}{{\mathsf{Q}}}
\newcommand{\eset}[2]{E_{#1}^{-}(#2)}
\newcommand{\peset}[2]{E_{#1}^{+}(#2)}
\def\G{\mathcal{G}}
\def\GL{\rm GL}
\theoremstyle{plain}
\newtheorem{theorem}{Theorem}[section]
\newtheorem{lemma}[theorem]{Lemma}
\newtheorem{proposition}[theorem]{Proposition}
\newtheorem{corollary}[theorem]{Corollary}
\theoremstyle{definition}
\newtheorem{example}[theorem]{Example}
\newtheorem{definition}[theorem]{Definition}
\theoremstyle{remark}
\newtheorem{remark}[theorem]{Remark}
\tikzset{root/.style = {circle, double, draw, inner sep = 1pt}}
\tikzset{vertex/.style = {circle, fill, inner sep = 1.5pt}}
\def\namedlabel#1#2{\begingroup
   \def\@currentlabel{#2}%
   \label{#1}\endgroup
}
\newcommand{\dyn}[1]{{\mathsf{#1}}}
\renewcommand\theenumii{(\alph{enumii})}
\renewcommand\p@enumiii{\theenumi\theenumii}
\begin{document}

\author[Eunjeong Lee]{Eunjeong Lee}
\address[E. Lee]{Department of Mathematics,
	Chungbuk National University,
	Cheongju 28644, Republic of Korea}
\email{eunjeong.lee@chungbuk.ac.kr}

\author[Mikiya Masuda]{Mikiya Masuda}
\address[M. Masuda]{Osaka Central Advanced Mathematics Institute (OCAMI), Osaka Metropolitan  
University, Sumiyoshi-ku, Sugimoto, 558-8585, Osaka, Japan}
\email{mikiyamsd@gmail.com}

\author[Seonjeong Park]{Seonjeong Park}
\address[S. Park]{Department of Mathematics Education, Jeonju University, Jeonju 55069, Republic of Korea}
\email{seonjeongpark@jj.ac.kr}

\thanks{
Lee was supported by the National Research Foundation of Korea(NRF) grant funded by the Korea government(MSIT) (No.\ RS-2022-00165641, No.\ RS-2023-00239947).
Masuda was supported in part by %JSPS Grant-in-Aid for Scientific Research 19K03472 and 
the HSE University Basic Research Program. Park was supported by the Basic Science Research Program through the National Research Foundation of Korea (NRF) funded by the Government of Korea (NRF-2020R1A2C1A01011045). This work was partly supported by Osaka City University Advanced Mathematical Institute (MEXT Joint Usage/Research Center on Mathematics and Theoretical Physics JPMXP0619217849).}

\title{Toric Schubert varieties and directed Dynkin diagrams}

\date{\today}

\subjclass[2020]{Primary: 14M25, 14M15, 57S12; Secondary: 05A05}

\keywords{Schubert varieties, toric varieties, Bott manifolds}

\begin{abstract} 
A flag variety is a homogenous variety $G/B$ where $G$ is a simple algebraic group over the complex numbers and $B$ is a Boel subgroup of $G$. A Schubert variety $X_w$ is a subvariety of $G/B$ indexed by an element $w$ in the Weyl group of $G$. It is called toric if it is a toric variety with respect to the maximal torus of $G$ in $B$. In this paper, we associate an edge-labeled digraph~$\mathcal{G}_w$ with a toric Schubert variety $X_w$ and classify toric Schubert varieties up to isomorphism. We also give a simple criterion of when a toric Schubert variety $X_w$ is (weak) Fano in terms of~$\G_w$. Finally, we discuss whether toric Schubert varieties can be distinguished by their integral cohomology rings up to isomorphism and show that this is the case when $G$ is of simply-laced type. 
\end{abstract}

\maketitle

\setcounter{tocdepth}{1} 
\tableofcontents

%===========================Introduction===============
\section{Introduction} 

A flag variety is a homogeneous variety $G/B$ where $G$ is a simple algebraic group over the complex numbers $\C$ and $B$ is a Borel subgroup of $G$. Let $T$ be the maximal torus of $G$ in $B$ and $W$ the Weyl group of $G$. The Bruhat decomposition $G/B=\bigsqcup_{w\in W}BwB/B$ provides a cell decomposition of $G/B$. Indeed, $BwB/B$ is isomorphic to an affine space $\C^{\ell(w)}$ where $\ell(w)$ denotes the length of $w$. The Schubert variety $X_w$ is the closure of the cell $BwB/B$ in the flag variety~$G/B$, and $X_w$ is called toric if it is a toric variety with respect to the torus $T$. It is known that $X_w$ is toric if and only if $w$ is a product of \emph{distinct} simple reflections (see \cite{Fan98, Karu13Schubert}). This implies that a toric Schubert variety $X_w$ is smooth, in fact, $X_w$ is a Bott manifold that is the total space of an iterated $\C P^1$-bundle over a point. 

In this paper, we associate an edge-labeled digraph $\mathcal{G}_w$ with a toric Schubert variety $X_w$ and prove the following. 

\begin{theorem}[Theorem~\ref{theo:graph_determines_Xw}] \label{theo:main1}
Let $W$ and $W'$ be the Weyl groups of simple algebraic groups~$G$ and~$G'$, respectively. Then toric Schubert varieties $X_w$ $(w\in W)$ and $X_{w'}$ $(w'\in W')$ are isomorphic as varieties if and only if $\G_{w}$ and $\G_{w'}$ are isomorphic as edge-labeled digraphs. 
\end{theorem}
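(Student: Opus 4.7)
The plan is to prove the two implications separately, exploiting the fact (recalled in the introduction) that $X_w$ is a Bott manifold, hence a smooth projective toric variety with respect to $T$.

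For the forward implication ($\G_w \cong \G_{w'}$ implies $X_w \cong X_{w'}$), my strategy is to give an explicit combinatorial recipe reconstructing the fan of $X_w$ from $\G_w$. The vertices of $\G_w$ are to be identified with a basis of the character lattice of $T$; the directed edges together with the Dynkin-type edge labels then dictate the primitive generators of the remaining rays, and the maximal cones follow from the iterated $\CP^1$-bundle description of a Bott tower. An isomorphism of edge-labeled digraphs $\G_w \xrightarrow{\sim} \G_{w'}$ will induce a lattice isomorphism identifying the two fans, which yields a torus-equivariant isomorphism $X_w \xrightarrow{\sim} X_{w'}$ and in particular an isomorphism of abstract varieties.

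For the converse direction, I would first promote an abstract isomorphism $\phi \colon X_w \xrightarrow{\sim} X_{w'}$ to a toric one. Since $X_w$ is a smooth projective toric variety, the torus $T$ sits inside $\Aut(X_w)$ as a maximal algebraic torus. Any such $\phi$ carries $T$ to some maximal torus of $\Aut(X_{w'})$; after composing with a suitable element of $\Aut(X_{w'})^{0}$, one may arrange $\phi$ to intertwine the torus actions. The resulting toric isomorphism corresponds to a lattice isomorphism between the two fans, from which $\G_{w}$ should be recoverable: the distinguished rays coming from the Bott tower encode the vertex set, their pairwise geometric relations encode the directed edges, and the integer entries appearing in the maximal cones encode the Dynkin labels.

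The principal obstacle I anticipate is that a Bott manifold can carry several inequivalent Bott tower structures, so it is not a priori clear that $\G_w$ depends only on $X_w$ as a variety rather than on the underlying reduced expression of $w$. Overcoming this requires either showing that all Bott tower structures compatible with the toric Schubert origin of $X_w$ yield the same edge-labeled digraph, or extracting a canonical Bott tower from purely variety-theoretic data --- for instance, by identifying the projections to lower-dimensional toric Schubert varieties among all $\CP^1$-fibrations of $X_w$. An induction on $\ell(w)$ using such a canonical fibration $X_w \to X_{w''}$, whose removal corresponds to deleting a distinguished vertex of $\G_w$, is the natural framework in which to carry the argument out.
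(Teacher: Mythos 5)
Your forward direction is essentially the paper's argument and is fine: by Theorem~\ref{thm_char_matrix_of_toric_Schubert} the fan of $X_w$ is determined by $\G_w$ (take $\mathbf{v}_{i_1},\dots,\mathbf{v}_{i_m}$ to be a lattice basis, recover each $\mathbf{w}_{i_k}$ from the in-edges of $i_k$ and their labels via the relation \eqref{equation_ray_vectors2}, and take as cones all subsets of rays avoiding the pairs $\{\mathbf{v}_{i_k},\mathbf{w}_{i_k}\}$), so a label-preserving digraph isomorphism induces a linear isomorphism carrying one fan to the other; one small correction is that the vertices index a basis of the \emph{cocharacter} lattice, not the character lattice. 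Your first step in the converse --- upgrading an abstract variety isomorphism to a toric one via conjugacy of maximal tori in the automorphism group --- is also correct, and is in fact more explicit than the paper, which simply asserts that isomorphic $X_w$, $X_{w'}$ have isomorphic fans.

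The genuine gap is in the remainder of the converse, and you have located it yourself: you must show that the edge-labeled digraph can be read off from the fan alone, independently of any Bott tower structure, and neither of your two suggested strategies is carried out. The second one (singling out a canonical $\CP^1$-fibration $X_w \to X_{w''}$ by variety-theoretic data and inducting on $\ell(w)$) is moreover problematic: toric Schubert varieties such as $X_{s_1s_3}\cong \CP^1\times\CP^1$ admit several interchangeable $\CP^1$-fibrations, and \lq\lq projections to lower-dimensional toric Schubert varieties\rq\rq\ is not a notion available before the theorem is proved. The paper dissolves the obstacle without any tower or induction by using primitive collections and primitive relations, which are canonical invariants of the fan: for a Bott manifold the primitive collections are exactly the $m$ pairs $\{\mathbf{v}_{i_k},\mathbf{w}_{i_k}\}$, so the fan isomorphism $\varphi$ induces a bijection $f$ on these pairs; applying $\varphi$ to the primitive relation \eqref{equation_ray_vectors2} and comparing with the corresponding relation in the target fan, a positivity argument (all coordinates on one side are nonnegative, whereas $\mathbf{w}'_{i'_{f(j)}}$ would contribute a negative entry on the other) shows $\varphi(\mathbf{v}_{i_j})=\mathbf{v}'_{i'_{f(j)}}$ whenever $c_{i_j,i_k}\neq 0$, and then linear independence of $\mathbf{v}'_{i'_1},\dots,\mathbf{v}'_{i'_m}$ forces $c_{i_j,i_k}=c_{i'_{f(j)},i'_{f(k)}}$, i.e., $f$ preserves directed edges together with their labels. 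In particular your \lq\lq principal obstacle\rq\rq\ --- non-uniqueness of Bott tower structures --- never arises, because $\G_w$ is encoded in the primitive relations and no choice of tower enters. Supplying this (or an equivalent) argument is exactly what your outline lacks; as written, the hard direction rests on the unproven assertion that the \lq\lq pairwise geometric relations encode the directed edges.\rq\rq
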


The edge-labeled digraph $\mathcal{G}_w$ is defined as follows. Take a reduced expression $s_{i_1}s_{i_2}\cdots s_{i_m}$ of~$w$ by simple reflections $s_i$'s in $W$, where $i_1,i_2,\dots,i_m$ are mutually distinct. Then the vertex set of~$\mathcal{G}_w$ is $\{i_1,\dots,i_m\}$, two vertices in $\mathcal{G}_w$ are joined by a directed edge $(i_j,i_k)$ if and only if $j>k$ and the $(i_j,i_k)$ entry $c_{i_j,i_k}$ in the Cartan matrix of Lie type $G$ is nonzero, and we label the directed edge by assigning $-c_{i_j,i_k}$ which is $1$, $2$, or $3$. Therefore, the underlying graph of $\mathcal{G}_w$ is an induced subgraph of the Dynkin diagram of Lie type $G$ with a multiple edge (if any) replaced by a single edge. The labels on the directed edges in $\mathcal{G}_w$ are $1$ in most cases. The only exception is the directed edge in $\mathcal{G}_w$ corresponding to the multiple edge in the Dynkin diagram and having the direction of the multiple edge. In this case, the label on the directed edge is $2$ if the multiple edge is a double edge and $3$ if it is a triple edge. 

As the next theorem shows, the edge-labeled digraph $\mathcal{G}_w$ is also useful for determining whether the toric Schubert variety $X_w$ is Fano or weak Fano. 

\begin{theorem}[Theorem~\ref{thm_Fano_weak_Fano}] \label{theo:main2}
A toric Schubert variety $X_w$ is Fano {\rm(}resp. weak Fano{\rm)} if and only if every vertex of $\mathcal{G}_w$ has indegree at most $1$ {\rm(}resp. $2${\rm)}.
\end{theorem}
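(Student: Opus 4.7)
My plan is to combine the explicit description of $X_w$ as a Bott manifold with the standard numerical criterion for $-K_{X_w}$ to be ample (resp.\ nef).

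Since $X_w$ is a smooth projective toric variety of dimension $m=\ell(w)$, it is Fano (resp.\ weak Fano) if and only if $-K_{X_w}\cdot C>0$ (resp.\ $\ge 0$) for every extremal class $[C]$ of the Mori cone $\overline{NE}(X_w)$. The Bott manifold structure yields Picard rank $m$ and a simplicial Mori cone generated by $m$ torus-invariant curve classes $C_1,\dots,C_m$, one for each factor of the Bott tower; the primitive collections of the Bott fan are precisely the $m$ pairs $\{u_k,v_k\}$ described below. Hence it suffices to verify the inequality on each $C_k$.

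For the intersection computation, I would use the explicit fan of $X_w$. After fixing a suitable basis $\bm{e}_1,\dots,\bm{e}_m$ of the cocharacter lattice, the $2m$ primitive ray generators of the fan take the form
\[
u_k=\bm{e}_k,\qquad v_k=-\bm{e}_k+\sum_{j>k}\ell_{jk}\,\bm{e}_j \qquad (k=1,\dots,m),
\]
where $\ell_{jk}:=-c_{i_j,i_k}\in\{0,1,2,3\}$ is the label of the directed edge $(i_j,i_k)$ in $\mathcal{G}_w$, with $\ell_{jk}=0$ when no such edge exists. The curve $C_k$ corresponds to the wall spanned by $\{u_j:j\ne k\}$; its primitive wall relation is
\[
u_k+v_k-\sum_{j>k}\ell_{jk}\,u_j=0.
\]
Feeding this into the toric intersection formula applied to $-K_{X_w}=\sum_{\rho}D_\rho$ then yields
\[
-K_{X_w}\cdot C_k \;=\; 2-\sum_{j>k}\ell_{jk} \;=\; 2-\mathrm{indeg}_{\mathcal{G}_w}(i_k),
\]
where the weighted indegree of $i_k$ is by definition the sum of the labels of the edges of $\mathcal{G}_w$ directed into $i_k$. (As a sanity check, when $m=2$ and the single label is $a$, this recovers $-K\cdot C_1=2-a$, reflecting the classical fact that the Hirzebruch surface $\mathbb{F}_a$ is Fano iff $a\le 1$ and weak Fano iff $a\le 2$.) Requiring the quantity above to be strictly positive (resp.\ non-negative) for every $k$ is exactly the condition that every vertex of $\mathcal{G}_w$ has indegree at most $1$ (resp.\ $2$), which is the assertion of the theorem.

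The main technical step I foresee is the identification of the Bott fan in the third paragraph: one needs to verify that the twisting coefficients of the iterated $\CP^1$-bundle structure on $X_w$ produced by the reduced expression $s_{i_1}s_{i_2}\cdots s_{i_m}$ are exactly the Cartan-matrix labels $\ell_{jk}=-c_{i_j,i_k}$, and that the Mori cone is indeed spanned by the $m$ classes $C_k$ coming from the walls described. Both facts belong to the same combinatorial package used to prove Theorem~\ref{theo:main1}, so they can be imported from the earlier part of the paper; after that, the Fano/weak Fano dichotomy is a formal consequence of the intersection number formula.
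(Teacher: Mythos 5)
Your proposal is correct and takes essentially the same route as the paper: the entire content of the proof is the relation $\mathbf{v}_{i_k}+\mathbf{w}_{i_k}=\sum_{j>k}(-c_{i_j,i_k})\mathbf{v}_{i_j}$ from Theorem~\ref{thm_char_matrix_of_toric_Schubert} together with the computation $2-\sum_{j>k}(-c_{i_j,i_k})=2-\mathrm{indeg}(i_k)$, which you carry out exactly (including the correct identification of the twisting coefficients with the Cartan labels). The only difference is packaging: the paper feeds the primitive relations directly into Batyrev's degree criterion (Proposition~\ref{prop:batyrev}), whereas you rederive that criterion in this case by pairing $-K_{X_w}$ with the wall curve classes $C_k$ and invoking Kleiman's criterion plus the fact that the Mori cone is generated by the primitive-relation classes -- an equivalent formulation rather than a genuinely different argument.
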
 

An element of the Weyl group $W$ is called a Coxeter element if it is a product of all simple reflections in $W$. When $w\in W$ is a Coxeter element, the underlying graph of $\mathcal{G}_w$ agrees with the Dynkin diagram of Lie type $G$ with a multiple edge (if any) replaced by a single edge. We enumerate the number of isomorphism classes of (Fano or weak Fano) toric Schubert varieties $X_w$ for Coxeter elements $w$ in each Lie type (see Table~\ref{table_cardinality_iso_classes_Coxeter} in Section~\ref{sect:enumeration}). 

We also discuss whether a toric Schubert variety $X_w$ is determined by its cohomology ring $H^*(X_w;\Z)$, in other words, whether $\G_w$ can be recovered from $H^*(X_w;\Z)$. 

\begin{theorem}[Theorem~\ref{theo:recover}] \label{theo:main3}
The edge-labeled digraph $\mathcal{G}_w$ can be recovered {\rm(}up to isomorphism{\rm)} from the cohomology ring $H^*(X_w;\Z)$ if 
all the labels in $\G_w$ are $1$. %\label{thm_1.3_2}
\end{theorem}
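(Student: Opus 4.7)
The strategy is to read off $\G_w$ directly from the Bott-manifold presentation of $H^*(X_w;\Z)$, exploiting the fact that the ``all labels $1$'' hypothesis produces a Bott matrix with $0/1$ entries. First I would write the standard presentation coming from the iterated $\CP^1$-bundle structure: fixing the reduced expression $s_{i_1}\cdots s_{i_m}$ of $w$, one obtains
\[
H^*(X_w;\Z) \;\cong\; \Z[x_1,\dots,x_m]\Big/\Big(x_k^2 - x_k\sum_{j<k} a_{kj}\,x_j\;:\;1\le k\le m\Big),
\]
where the strictly lower-triangular matrix $A=(a_{kj})$ encodes the labeled adjacency data of $\G_w$ (with vertices ordered by their position in the reduced word). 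When every label equals $1$, every $a_{kj}$ lies in $\{0,1\}$, so $X_w$ is a so-called $\Q$-trivial Bott manifold.

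The reconstruction of $\G_w$ from the abstract graded ring $R=H^*(X_w;\Z)$ would then proceed inductively on $m$. At each stage I would identify a primitive class $y\in R^2$ that serves as a candidate ``top'' Bott generator--characterized ring-theoretically by the existence of a linear form $L\in R^2$ with $y(y-L)=0$, the quotient $R/(y)$ itself being the cohomology ring of a $\Q$-trivial Bott manifold of one dimension less, and $L$ having $0/1$ coefficients in a suitable Bott generator system of the quotient. Peeling off such a top generator corresponds to removing a source (a vertex with no incoming edges) of $\G_w$; the coefficients of $L$ recover exactly the edges emanating from that source. Iterating this procedure reconstructs the entire labeled adjacency matrix up to the equivalences of Bott matrices generated by sign flips $x_k\mapsto L_k-x_k$ and by reorderings that preserve lower-triangularity, each of which preserves the isomorphism class of the associated directed graph.

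The main technical obstacle is to show that this inductive reconstruction is well defined: one must prove that the set of valid ``top generator'' classes $y$ is nonempty, and that any two such choices lead, after iteration, to the same edge-labeled digraph up to isomorphism. This is essentially a cohomological rigidity statement for $\Q$-trivial Bott manifolds, and I would build on the known results of Choi--Masuda in this direction. The $0/1$ hypothesis is essential at this step: in the presence of labels $2$ or $3$ the additional arithmetic leaves room for distinct Bott matrices to produce isomorphic cohomology rings, which is why the theorem restricts to the unlabeled case. Combined with Theorem~\ref{theo:main1}, the reconstruction then implies that when all labels of $\G_w$ are $1$ (in particular whenever $G$ is simply laced), toric Schubert varieties are distinguished by their integral cohomology rings.
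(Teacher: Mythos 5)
Your starting point, the Bott-type presentation of $H^*(X_w;\Z)$, is exactly the paper's Lemma~\ref{lem:cohomology}, but your inductive peeling argument has a genuine gap precisely at the step you flag as the ``main technical obstacle,'' and the results you propose to fill it with do not apply. First, the assertion that a $0/1$ Bott matrix makes $X_w$ a $\Q$-trivial Bott manifold is false: already for $w=s_1s_2s_3$ in type $\dyn{A}_3$ the relations are $x_1^2=0$, $x_2^2=x_2x_1$, $x_3^2=x_3x_2$, and a direct computation shows the square-zero subspace of $H^2(X_w;\Q)$ is only $2$-dimensional (spanned by $x_1$ and $2x_2-x_1$; any class with nonzero $x_3$-coefficient has nonzero square), so $H^*(X_w;\Q)\not\cong H^*((\CP^1)^3;\Q)$. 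Hence the Choi--Masuda classification of $\Q$-trivial Bott manifolds \cite{ch-ma12} does not cover these rings; the paper uses only its Corollary~2.1, and only to pin down the square-zero primitive integral classes (its Lemma~\ref{lemm:sve}), i.e., to locate sinks of $\G_w$. Second, even the strong cohomological rigidity of arbitrary Bott manifolds \cite{Choi_et} classifies only up to \emph{diffeomorphism}, which is strictly coarser than what is needed here: $\G_w$ is a variety-isomorphism invariant, not a diffeomorphism invariant (the paper's Remark~\ref{rema:1} exhibits $F_0\,/\,F_2$ and $F_1\,/\,F_3$ with isomorphic cohomology rings but non-isomorphic digraphs), so quoting rigidity cannot by itself show that two admissible peeling sequences produce the same digraph. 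Third, your claim that the sign flips $x_k\mapsto L_k-x_k$ and lower-triangular reorderings ``preserve the isomorphism class of the associated directed graph'' is unsubstantiated: in the closely analogous real Bott manifold setting \cite{ch-ma-ou17}, non-isomorphic acyclic digraphs can have isomorphic cohomology rings, so some structural hypothesis beyond $0/1$ entries must enter the argument.

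The paper closes exactly this gap by a route different from induction on $m$: it passes to $\Z_2$ coefficients and uses the eigenelement machinery of \cite{ch-ma-ou17}. An element $\alpha\in H^2(X_w;\Z_2)$ is an eigenelement if $x^2=\alpha x$ for some $x\neq 0,\alpha$, with eigenspace $E(\alpha)=\{x \mid x^2=\alpha x\}$; these data are manifestly preserved by any graded ring isomorphism, so no well-definedness issue arises. The decisive structural input, which replaces your appeal to rigidity, is that the underlying graph of $\G_w$ is an induced subgraph of a Dynkin diagram, hence a disjoint union of paths and trees with a single trivalent vertex; Lemma~\ref{lemm:eigenspace} then computes all eigenspaces and multiplicities ($1\le\dim\bar E(\alpha)\le 3$, with an explicit characterization of when it exceeds $1$), and the proof of Theorem~\ref{theo:recover} runs a five-step algorithm reconstructing $\G_w$ from the one-dimensional intersection pattern of the spaces $V_j$, recovering trivalent vertices in types $\dyn{D}$ and $\dyn{E}$ via minimal choices with $\alpha_j\in V_p\oplus V_q$ or $V_p\oplus V_q\oplus V_s$. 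To repair your proposal you would have to prove, from scratch, that in the label-$1$ Dynkin setting any two valid ``top generator'' classes are related by a ring automorphism inducing an isomorphism of digraphs; that statement is essentially equivalent to the theorem and is not available off the shelf.
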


Combining Theorem~\ref{theo:main3} with Theorem~\ref{theo:main1}, we obtain the following corollary. 

\begin{corollary}%[Theorem~\ref{thm:dynkin2}]
Let $W$ and $W'$ be the Weyl groups of simple algebraic groups $G$ and $G'$ of simply-laced type {\rm(}i.e. type $\dyn{A}$, $\dyn{D}$, or $\dyn{E}${\rm)}. Then toric Schubert varieties $X_w$ $(w\in W)$ and $X_{w'}$ $(w'\in W')$ are isomorphic as varieties if and only if $H^{\ast}(X_w;\Z)$ and $H^{\ast}(X_{w'};\Z)$ are isomorphic as graded rings.
\end{corollary}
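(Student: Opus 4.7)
The plan is to prove the two implications separately, with the nontrivial direction being an immediate combination of Theorem~\ref{theo:main1} and Theorem~\ref{theo:main3}.

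For the forward implication, an isomorphism of varieties $X_w \cong X_{w'}$ induces an isomorphism of graded cohomology rings $H^{\ast}(X_w;\Z) \cong H^{\ast}(X_{w'};\Z)$ by functoriality of singular cohomology, so there is nothing to prove.

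For the backward implication, assume that $H^{\ast}(X_w;\Z)\cong H^{\ast}(X_{w'};\Z)$ as graded rings. The key observation is that when $G$ and $G'$ are of simply-laced type ($\dyn{A}$, $\dyn{D}$, or $\dyn{E}$), the Dynkin diagrams contain no multiple edges. According to the construction of the edge-labeled digraph $\G_w$ described in the introduction, the only edges of $\G_w$ that can carry a label different from $1$ are those arising from a multiple edge in the Dynkin diagram. Consequently every label in $\G_w$ and in $\G_{w'}$ equals $1$, so the hypothesis of Theorem~\ref{theo:main3} is satisfied by both toric Schubert varieties.

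Applying Theorem~\ref{theo:main3} to each of $X_w$ and $X_{w'}$ therefore recovers $\G_w$ and $\G_{w'}$, up to isomorphism of edge-labeled digraphs, from the respective cohomology rings. Since the recovery is an invariant of the graded-ring isomorphism class, the given ring isomorphism forces $\G_w\cong \G_{w'}$ as edge-labeled digraphs. Theorem~\ref{theo:main1} then yields $X_w\cong X_{w'}$ as varieties, completing the proof. There is no genuine obstacle here; the whole point of the corollary is to package Theorems~\ref{theo:main1} and~\ref{theo:main3} into a cohomological rigidity statement for toric Schubert varieties in simply-laced type, and the only substantive issue, namely that the simply-laced hypothesis makes every label equal to~$1$, is immediate from the definition of $\G_w$.
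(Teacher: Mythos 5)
Your proof is correct and matches the paper's intended argument exactly: the paper obtains this corollary by combining Theorem~\ref{theo:main3} with Theorem~\ref{theo:main1} (see also Corollary~\ref{thm:dynkin2} in Section~\ref{section_cohomology_ring_ADE}), noting, as you do, that in simply-laced type every edge of $\G_w$ is labeled by $1$ because the Cartan integers $-c_{i,j}$ are at most $1$. Your observation that the recovery of $\G_w$ from $H^*(X_w;\Z)$ is invariant under graded ring isomorphism is precisely how the paper's recovery procedure (built from square-zero primitive elements and eigenspaces) is used.
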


\begin{remark} \label{rema:1}
	The assumption in Theorem~\ref{theo:main3} cannot be weakened as is seen in the following examples. 
	\begin{enumerate}
		\item 
		When $G$ is of type $\dyn{G}_2$, there are two Coxeter elements $w,w'$ and both $\mathcal{G}_w$ and $\mathcal{G}_{w'}$ consist of two vertices with a directed edge but one has label $1$ while the other has label $3$. However, the cohomology rings $H^*(X_w;\Z)$ and $H^*(X_{w'};\Z)$ are isomorphic. Indeed, if $F_a$ denotes the Hirzebruch surface indexed by a nonnegative integer $a$, then $X_w$ and $X_{w'}$ are $F_1$ and $F_3$. As is well-known, $F_a$ and $F_{a'}$ are isomorphic if and only if $a=a'$, and $H^*(F_a;\Z)$ and $H^*(F_{a'};\Z)$ are isomorphic as graded rings (more strongly, $F_a$ and $F_{a'}$ are diffeomorphic) if and only if $a\equiv a'\pmod{2}$. 
		
		\item When $G$ is of type $\dyn{C}_3$, $\G_{s_1s_3}$ consists of two vertices with no edge and $\G_{s_2s_3}$ consists of two vertices and a directed edge with label $2$. On the other hand, $X_{s_1s_3}$ and $X_{s_2s_3}$ are isomorphic to $F_0$ and $F_2$ respectively, so their integral cohomology rings are isomorphic. 
		We obtain a similar observation when $G$ is of type $\dyn{B}_3$ or $\dyn{F}_4$.
		The toric Schubert variety $X_{s_3s_2}$ in type $\dyn{B}_3$ or $\dyn{F}_4$ is isomorphic to the Hirzebruch surface $F_2$ 
		while $X_{s_1s_3}$ is isomorphic to $F_0$. 
	\end{enumerate}
\end{remark}

As mentioned in Remark~\ref{rema:1}, not all toric Schubert varieties are distinguished as varieties by their integral cohomology rings. However, they are distinguished by their integral cohomology rings as smooth manifolds. Indeed, toric Schubert varieties are Bott manifolds and it is known that any Bott manifolds are distinguished by their integral cohomology rings up to diffeomorphism~(see~\cite{Choi_et}). 
Related to this, it is asked and studied in \cite{richmond2021isomorphism} whether smooth (not necessarily toric) Schubert varieties are distinguished by their integral cohomology rings up to diffeomorphism or homeomorphism. %There seems to be no known counterexample to this question. 

This paper is organized as follows. As mentioned above, toric Schubert varieties are Bott manifolds. In Section~\ref{sec:general}, we review fans of Bott manifolds and recall a criterion of when they are Fano or weak Fano. In Section~\ref{section_toric_Schubert_directed_graphs}, we associate the edge-labeled digraph $\mathcal{G}_w$ with a toric Schubert variety $X_w$ and prove Theorems~\ref{theo:main1} and~\ref{theo:main2}. Using Theorem~\ref{theo:main1} we enumerate the isomorphism classes of (Fano or weak Fano) toric Schubert varieties in Section~\ref{sect:enumeration}. In Section~\ref{section_cohomology_ring_ADE} we prove Theorem~\ref{theo:main3} using a presentation of $H^*(X_w;\Z)$ as a graded ring with generators and relations.

%===========================Section: Torus orbit closures===============

\section{Preliminaries: toric Schubert varieties and Bott manifolds}\label{sec:general}
In this section, we recall the classification of toric Schubert varieties and their description. To do so, we review Bott manifolds, which are smooth projective toric varieties. Moreover, we consider Fano and weak Fano conditions on Bott manifolds.

%%%%%
\subsection{Toric Schubert varieties}
Let $G$ be a simple algebraic group over $\C$ of rank $r$, let $B$ be a Borel subgroup, and let $T$ be a maximal torus of $G$ in $B$. We denote by $W$ the Weyl group of $G$. A \emph{flag variety} is the homogeneous space $G/B$, which is a smooth projective variety. 
When $G$ is of type $A$ with rank $n-1$, then $G/B$ is diffeomorphic to 
\[
\flag(n) \colonequals \{(\{0\} \subset V_1 \subset V_2 \subset \cdots \subset V_{n} = \C^{n}) \mid \dim_{\C} V_i = i \text{ for all } i =1,\dots,n\},
\]
where each $V_i$ is a linear subspace of $\C^{n}$. Moreover, the Weyl group is the symmetric group $\Sn$ on the set $[n] \colonequals \{1,\dots,n\}$.

The Weyl group $W$ of $G$ is generated by simple reflections $s_i$ for $i=1,\dots,r$, so each element $w \in W$ can be expressed by a product of generators:
\[
w = s_{i_1} s_{i_2} \cdots s_{i_m}. 
\]
If $m$ is minimal among all such expressions for $w$, then $m$ is called the \emph{length} of $w$ and we write $\ell(w) = m$. Moreover, we call the word $s_{i_1} s_{i_2} \cdots s_{i_m}$ a \emph{reduced decomposition} for $w$. A decomposition $s_{i_1} s_{i_2} \cdots s_{i_m}$ provides a string $(i_1,i_2,\dots,i_m)$ in $[r]^k$ and we call it a \emph{word}. A word $(i_1,i_2,\dots,i_m)$ is \emph{reduced} if the corresponding decomposition $s_{i_1} s_{i_2} \cdots s_{i_m}$ is reduced. 
In Table~\ref{table_finite}, we provide Dynkin diagrams for finite types. 
In this manuscript, we use the ordering on the simple roots as in the table following~\cite{Humphreys78Lie}.

\begin{table}[t]
\begin{center}
\begin{tabular}{c|l  }
\toprule
$\Phi$ & Dynkin diagram \\
\midrule
$\dyn{A}_r$ $(r \geq 1)$ &
\begin{tikzpicture}[scale=.5, baseline=-.5ex]
	\tikzset{every node/.style={scale=0.7}}
	\node[Dnode] (1) {} node[below = 0cm of 1] {$1$};
	\node[Dnode] (2) [right = of 1] {} node[below=0cm of 2] {$2$};
	\node[Dnode] (3) [right = of 2] {} node[below=0cm of 3] {$3$};
	\node[Dnode] (4) [right =of 3] {} node[below=0cm of 4] {$r-1$};
	\node[Dnode] (5) [right =of 4] {} node[below=0cm of 5] {$r$};			

	\draw (1)--(2)--(3)
		(4)--(5);
	\draw[dotted] (3)--(4);

\end{tikzpicture}  \\[1em] 			
$\dyn{B}_r$ $(r \geq 2)$ &
\begin{tikzpicture}[scale=.5, baseline=-.5ex]
	\tikzset{every node/.style={scale=0.7}}
			
	\node[Dnode] (1) {} node[below = 0cm of 1] {$1$};
	\node[Dnode] (2) [right = of 1] {} node[below = 0cm of 2] {$2$};
	\node[Dnode] (3) [right = of 2] {} node[below = 0cm of 3] {$r-2$};
	\node[Dnode] (4) [right =of 3] {} node[below = 0cm of 4] {$r-1$};
	\node[Dnode] (5) [right =of 4] {} node[below = 0cm of 5] {$r$};

	\draw (1)--(2)
		(3)--(4);
	\draw [dotted] (2)--(3);
	\draw[double line] (4)--(5);
\end{tikzpicture}  \\[1em]		
$\dyn{C}_r$ $(r \geq 3)$ & 
\begin{tikzpicture}[scale=.5, baseline=-.5ex]
	\tikzset{every node/.style={scale=0.7}}
			
	\node[Dnode] (1) {} node[below = 0cm of 1] {$1$};
	\node[Dnode] (2) [right = of 1] {} node[below = 0cm of 2] {$2$};
	\node[Dnode] (3) [right = of 2] {} node[below = 0cm of 3] {$r-2$};
	\node[Dnode] (4) [right =of 3] {} node[below = 0cm of 4] {$r-1$};
	\node[Dnode] (5) [right =of 4] {} node[below = 0cm of 5] {$r$};

	\draw (1)--(2)
		(3)--(4);
	\draw [dotted] (2)--(3);
	\draw[double line] (5)--(4);
\end{tikzpicture}  \\[1em] 
			
$\dyn{D}_r$ $(r \geq 4)$ & 
\begin{tikzpicture}[scale=.5, baseline=-.5ex]
	\tikzset{every node/.style={scale=0.7}}
			
	\node[Dnode] (1) {} node[below = 0cm of 1] {$1$};
	\node[Dnode] (2) [right = of 1] {} node[below = 0cm of 2] {$2$};
	\node[Dnode] (3) [right = of 2] {} node[below = 0cm of 3] {$r-3$};
	\node[Dnode] (4) [right =of 3] {} node[below = 0cm of 4] {$r-2$};			
			
	\node[Dnode] (5) [above right= 0.3cm and 1cm of 4] {} node[right = 0cm of 5] {$r-1$};
	\node[Dnode] (6) [below right= 0.3cm and 1cm of 4] {} node[right = 0cm of 6] {$r$};

	\draw(1)--(2)
		(3)--(4)--(5)
		(4)--(6);
	\draw[dotted] (2)--(3);
\end{tikzpicture}
\\[1em] 
$\dyn{E}_6$& 
\begin{tikzpicture}[scale=.5, baseline=-.5ex]
	\tikzset{every node/.style={scale=0.7}}

	\node[Dnode] (1) {} ;
	\node[Dnode] (3) [right=of 1] {} ;
	\node[Dnode] (4) [right=of 3] {} ;
	\node[Dnode] (2) [above=of 4] {} node[right = 0cm of 2] {$2$};
	\node[Dnode] (5) [right=of 4] {} ;
	\node[Dnode] (6) [right=of 5] {} ;
	
	\foreach \x in {1,3,4,5,6}{
		\node[below = 0cm of \x] {$\x$};
	}

	\draw(1)--(3)--(4)--(5)--(6)
		(2)--(4);
\end{tikzpicture}			
\\[1em]
$\dyn{E}_7$ & 
\begin{tikzpicture}[scale=.5, baseline=-.5ex]
	\tikzset{every node/.style={scale=0.7}}

	\node[Dnode] (1) {} ;
	\node[Dnode] (3) [right=of 1] {} ;
	\node[Dnode] (4) [right=of 3] {} ;
	\node[Dnode] (2) [above=of 4] {} node[right = 0cm of 2] {$2$};
	\node[Dnode] (5) [right=of 4] {} ;
	\node[Dnode] (6) [right=of 5] {} ;
	\node[Dnode] (7) [right=of 6] {} ;

	\foreach \x in {1,3,4,5,6,7}{
		\node[below = 0cm of \x] {$\x$};
	}

	\draw(1)--(3)--(4)--(5)--(6)--(7)
		(2)--(4);
\end{tikzpicture}	
\\[1em]
$\dyn{E}_8$ & 
\begin{tikzpicture}[scale=.5, baseline=-.5ex]
	\tikzset{every node/.style={scale=0.7}}

	\node[Dnode] (1) {} ;
	\node[Dnode] (3) [right=of 1] {} ;
	\node[Dnode] (4) [right=of 3] {} ;
	\node[Dnode] (2) [above=of 4] {} node[right = 0cm of 2] {$2$};
	\node[Dnode] (5) [right=of 4] {} ;
	\node[Dnode] (6) [right=of 5] {} ;
	\node[Dnode] (7) [right=of 6] {};
	\node[Dnode] (8) [right=of 7] {};

	\foreach \x in {1,3,4,5,6,7,8}{
		\node[below = 0cm of \x] {$\x$};
	}

	\draw(1)--(3)--(4)--(5)--(6)--(7)--(8)
		(2)--(4);
\end{tikzpicture}
\\[1em]
$\dyn{F}_4$
&
\begin{tikzpicture}[scale = .5, baseline=-.5ex]
	\tikzset{every node/.style={scale=0.7}}
			
	\node[Dnode] (1) {};
	\node[Dnode] (2) [right = of 1] {};
	\node[Dnode] (3) [right = of 2] {};
	\node[Dnode] (4) [right =of 3] {};

	\foreach \x in {1,...,4}{
		\node[below = 0cm of \x] {$\x$};
	}
			
	\draw (1)--(2)
		(3)--(4);
	\draw[double line] (2)-- (3);
\end{tikzpicture}  \\[1em]
$\dyn{G}_2$
&
\begin{tikzpicture}[scale =.5, baseline=-.5ex]
	\tikzset{every node/.style={scale=0.7}}
			
	\node[Dnode] (1) {};
	\node[Dnode] (2) [right = of 1] {};

	\foreach \x in {1,2}{
		\node[below = 0cm of \x] {$\x$};
	}
			
	\draw[triple line] (2)--(1);
	\draw (1)--(2);
			
\end{tikzpicture}\\
\bottomrule
\end{tabular}
\end{center}
\caption{Dynkin diagrams for finite types}\label{table_finite}
\end{table}

The left multiplication of $T$ on $G$ induces an action of $T$ on $G/B$. Then there is a bijective correspondence between the $T$-fixed point set $(G/B)^T$ and the Weyl group $W$ of $G$, and we denote by $wB$ the fixed point set in $G/B$ corresponding to $w \in W$. 
For $w \in W$, the \emph{Schubert variety} $X_w$ is a subvariety of $G/B$ defined by the (Zariski) closure of $BwB/B \subset G/B$. A Schubert variety is invariant under the $T$-action on $G/B$ of complex dimension $\ell(w)$.

Considering a reduced decomposition for $w \in W$, one can decide whether the Schubert variety~$X_w$ is toric or not with respect to the torus action $T$ as follows.
\begin{theorem}[{\cite{Fan98, Karu13Schubert}}]\label{thm_toric_Schubert_distinct}
For $w \in W$, the following statements are equivalent:
\begin{enumerate}
\item $X_w$ is a toric variety.
\item $X_w$ is a smooth toric variety.
\item A reduced decomposition for $w$ consists of distinct letters.
\end{enumerate}
\end{theorem}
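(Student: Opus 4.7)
The implication $(2)\Rightarrow(1)$ is tautological, so the content lies in $(3)\Rightarrow(2)$ and $(1)\Rightarrow(3)$, both of which I would attack through the Bott-Samelson resolution together with a $T$-fixed-point analysis.

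For $(3)\Rightarrow(2)$, let $\underline{w}=(s_{i_1},\dots,s_{i_m})$ be a reduced decomposition whose letters are mutually distinct. Form the Bott-Samelson variety
\[
Z_{\underline{w}}=P_{i_1}\times_B P_{i_2}\times_B\cdots\times_B P_{i_m}/B,
\]
where $P_i=B\sqcup Bs_iB$ is the minimal parabolic attached to $s_i$. This $Z_{\underline{w}}$ is always smooth, carries a $T$-action by left multiplication on the first factor, and is an iterated $\CP^1$-bundle over a point (a Bott tower) via the projection forgetting the last factor, so it is automatically a smooth projective $T$-toric variety. The canonical birational multiplication map $\pi\colon Z_{\underline{w}}\to X_w$ is $T$-equivariant, and the distinct-letter hypothesis is what upgrades $\pi$ to an isomorphism: both source and target have exactly $2^m$ $T$-fixed points (using the classical identity $|\{v\le w\}|=2^{\ell(w)}$ for boolean $w$), $\pi$ is a bijection on these fixed points, and then normality of $X_w$ combined with Zariski's main theorem promotes $\pi$ to an isomorphism. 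Consequently $X_w$ inherits the structure of a smooth projective toric variety.

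For $(1)\Rightarrow(3)$, I would analyze the local structure of $X_w$ at the $T$-fixed point $eB$. The $T$-invariant curves through $eB$ in $X_w$ are in bijection with positive roots $\alpha$ satisfying $s_\alpha\le w$, while in any complete toric variety of dimension $n$ each $T$-fixed point lies on at least $n$ $T$-invariant curves; combining these with $\dim X_w=\ell(w)$ gives the inequality $|\{\alpha\in\Phi^+:s_\alpha\le w\}|\ge\ell(w)$. Iterating the same local analysis at every fixed point $vB$ of $X_w$ and invoking the Carrell-Peterson criterion (or, equivalently, Tenner's pattern-avoidance characterization of boolean elements) then forces $|\mathrm{Supp}(w)|=\ell(w)$, which means every reduced expression of $w$ consists of distinct simple reflections. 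The hardest step I anticipate is this last one: translating the inequality on $T$-invariant curves into the sharper combinatorial conclusion that $w$ is boolean requires exploiting the rigidity of the Schubert paving simultaneously at every fixed point, essentially running the Bott-Samelson comparison of the previous paragraph in reverse.
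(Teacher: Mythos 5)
The paper itself does not prove this theorem---it is quoted from Fan and Karuppuchamy---so your sketch must stand on its own; your direction $(3)\Rightarrow(2)$ does follow the route the paper gestures at (Bott--Samelson varieties and the Grossberg--Karshon description). But there is a genuine gap there: the inference ``$Z_{\underline{w}}$ is an iterated $\C P^1$-bundle, so it is automatically a smooth projective $T$-toric variety'' is a non sequitur. A Bott tower is toric with respect to an abstract torus $(\C^{*})^m$, not with respect to the maximal torus $T$ of $G$; for the non-reduced word $(s_1,s_2,s_1)$ in $\mathrm{SL}_3$ the Bott--Samelson threefold is still a Bott tower, yet $T$ has rank $2$, so it cannot be $T$-toric. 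Since $T$-toricness is exactly what is at stake, assuming it for $Z_{\underline{w}}$ is close to circular. The missing step is to exhibit a dense $T$-orbit, and the standard way is through the open cell $BwB/B\cong\C^{\ell(w)}$, on which $T$ acts linearly with weights the inversion roots $\beta_k=s_{i_1}\cdots s_{i_{k-1}}(\alpha_{i_k})=\alpha_{i_k}+\sum_{j<k}(\text{nonnegative integers})\,\alpha_{i_j}$: when the letters are distinct this system is unitriangular in $\alpha_{i_1},\dots,\alpha_{i_m}$, hence linearly independent, and a linear torus action with linearly independent weights has a dense orbit. (Your ZMT step also needs \emph{all} fibers of $\pi$ finite, not only those over fixed points; this is repairable by equivariance---a positive-dimensional fiber would be a closed $T$-invariant projective set containing at least two $T$-fixed points, contradicting your fixed-point bijection---but it should be said.)

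The direction $(1)\Rightarrow(3)$ does not work as sketched. The inequality $|\{\alpha\in\Phi^{+}:s_\alpha\le w\}|\ge\ell(w)$ you derive is Deodhar's inequality, valid for \emph{every} $w$, so it carries no information about toricness; moreover, in a possibly singular complete toric variety a fixed point can lie on \emph{more} than $n$ invariant curves (non-simplicial cones), so the curve count bounds in the useless direction. The Carrell--Peterson criterion concerns rational smoothness, which you are not entitled to invoke---smoothness is statement (2), the very thing being proved, and a priori a toric $X_w$ could be singular. Your closing sentence concedes that the passage from curve counts to $|\mathrm{Supp}(w)|=\ell(w)$ is unproved, and that concession is precisely the content of the implication. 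The weight computation above closes it in one stroke: if $X_w$ is toric, the dense $T$-orbit meets the open cell, a linear $T$-action on $\C^{\ell(w)}$ has a dense orbit only if its $\ell(w)$ weights are linearly independent, and the inversion roots all lie in the span of $\{\alpha_i : i\in\mathrm{Supp}(w)\}$, whence $\ell(w)\le|\mathrm{Supp}(w)|$, i.e.\ every reduced word for $w$ has distinct letters. This single computation proves $(1)\Leftrightarrow(3)$ and makes the invariant-curve analysis unnecessary.
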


\begin{example}\label{example_toric_A2}
There are six elements in the Weyl group $\mathfrak{S}_3$ of type $\dyn{A}_2$. For each $w \in \mathfrak{S}_3$, we display whether the corresponding Schubert variety $X_w$ is toric or not and the length $\ell(w) = \dim_{\C} X_w$ in the following table. 
\begin{center}
\begin{tabular}{c|cccccc}
\toprule
$w$  & $e$ & $s_1$ & $s_2$ & $s_1s_2$ & $s_2s_1$ & $s_1s_2s_1$ \\
\midrule 
$X_w$ is toric &  yes & yes & yes & yes & yes & no \\
$\ell(w)$ & $0$ & $1$ & $1$ & $2$ & $2$ & $3$ \\
\bottomrule
\end{tabular}
\end{center}
\end{example}

\begin{example}\label{example_toric_B2}
The Weyl group $W_{\dyn{B}_2}$ of type $\dyn{B}_2$ is given by $W_{\dyn{B}_2} = \langle s_1, s_2 \mid (s_1)^2 = (s_2)^2 = (s_1s_2)^4 = e\rangle$. There are eight elements in $W_{\dyn{B}_2}$. Five of them produce toric Schubert varieties. 
\begin{center}
\begin{tabular}{c|cccccccc}
\toprule
$w$ & $e$ & $s_1$ & $s_2$ & $s_1s_2$ & $s_2s_1$ & 
	$s_1s_2s_1$ & $s_2s_1s_2$ & $s_1s_2s_1s_2$ \\
\midrule 
$X_w$ is toric & yes & yes & yes & yes & yes & no & no & no  \\
$\ell(w)$ & $0$ & $1$ & $1$ & $2$ & $2$ & $3$ & $3$ & $4$ \\
\bottomrule
\end{tabular}
\end{center}
\end{example}

When $G$ is of type $\dyn{A}_{n-1}$, the fan of a toric Schubert variety $X_w$ is the same as the normal fan of a polytope 
\[
\Q_{w} \colonequals \Conv\{(v^{-1}(1),\dots,v^{-1}(n)) \in \R^n \mid v \leq w\}. 
\]
Here, we compare two elements $v,w \in W$ with respect to the \emph{Bruhat order}, that is, for $v = s_{i_1} \cdots s_{i_{k}}$, $u \leq v$ if and only if 
there exists a reduced decomposition $u = s_{i_{j_1}} s_{i_{j_2}} \cdots s_{i_{j_q}}$ with $1 \leq j_1 < \dots < j_q \leq k$. To consider the fan of a toric Schubert variety $X_w$ in general Lie types, we recall~\cite[\S 3.7]{GK94Bott} and \cite[\S 4.3]{LMP_torus_orbit_closures_book_chapter}.  

\begin{theorem}[{\cite[Theorem~4.23]{LMP_torus_orbit_closures_book_chapter}}]\label{thm_char_matrix_of_toric_Schubert}
Let $w = s_{i_1} \cdots s_{i_m}$ be a reduced decomposition for $w \in W$. Assume that $i_1,\dots,i_m$ are distinct. Then the fan of the toric Schubert variety $X_w$ is isomorphic to the fan in $\R^m$ such that primitive ray vectors are the $2m$ column vectors of the following matrix 
and a subset of the column vectors forms a cone if and only if it does not contain both the $i$th column vectors in the left $m \times m$ submatrix and the right $m \times m$ submatrix for each $i=1,\dots,m$:
\begin{equation}\label{equation_ray_vectors}
\left[
\begin{array}{cccc|cccc}
1 & 0 & \cdots & 0 & -1 & 0 & \cdots & 0 \\
0 & 1 &  \cdots & 0 & & -1 & \cdots & 0 \\
\vdots & & \ddots & & & -c_{i_j,i_k} & \ddots & \\
0 & 0 & \cdots & 1 & & & & -1
\end{array}
\right],
\end{equation}
where the $(j,k)$ entry for $m \geq j >k \geq 1$ in the right submatrix above is $-c_{i_j,i_k}$ and $c_{i,j}$ are Cartan integers. Therefore, if we denote the $2m$ ray vectors in the above matrix from left to right by $\mathbf{v}_{i_1},\dots,\mathbf{v}_{i_m},\mathbf{w}_{i_1},\dots,\mathbf{w}_{i_m}$, then 
\begin{equation} \label{equation_ray_vectors2}
\mathbf{v}_{i_k}+\mathbf{w}_{i_k}=\sum_{j>k}(-c_{i_j,i_k})\mathbf{v}_{i_j}
\end{equation}
for $k=1,\dots,m$. 
\end{theorem}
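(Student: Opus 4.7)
The plan is to realize $X_w$ as an iterated $\C P^1$-bundle via the Bott--Samelson construction and then read its fan off the tower data, with the Cartan integers entering through the Weyl group action on simple roots. Since $i_1, \dots, i_m$ are mutually distinct, Theorem~\ref{thm_toric_Schubert_distinct} guarantees that $X_w$ is smooth, so the Bott--Samelson desingularization $Z_{\mathbf{i}} = P_{i_1} \times_B P_{i_2} \times_B \cdots \times_B P_{i_m}/B \to X_w$ is an isomorphism. The forgetful maps $Z_k = P_{i_1} \times_B \cdots \times_B P_{i_k}/B \to Z_{k-1}$ are $\C P^1$-bundles, which exhibits $X_w$ as a Bott manifold.

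Next I would invoke the standard description of the fan of a Bott manifold arising from a tower $B_m \to B_{m-1} \to \cdots \to B_0 = \{\mathrm{pt}\}$ with $B_k = \mathbb{P}(\mathcal{O} \oplus L_k)$. Such a fan has $2m$ primitive rays grouped in opposite pairs $(\mathbf{v}_k, \mathbf{w}_k)$, and the maximal cones are obtained by choosing exactly one ray from each pair, which matches the compatibility condition asserted in the theorem. The relations $\mathbf{v}_k + \mathbf{w}_k = \sum_{j} a_{jk}\mathbf{v}_j$ encode the first Chern classes of the twisting line bundles $L_k$, so the content of the theorem reduces to identifying these coefficients as $a_{jk} = -c_{i_j, i_k}$.

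The crux of the argument is that the line bundle $L_k$ at the $k$-th stage is attached to the character $-\alpha_{i_k}$ of $B$ via the Bott--Samelson construction. Computing its Chern class requires tracking this character through each earlier quotient by $P_{i_j}/B$, and the basic identity $s_{i_j}(\alpha_{i_k}) = \alpha_{i_k} - c_{i_j, i_k}\alpha_{i_j}$ shows that each such stage contributes exactly a $-c_{i_j, i_k}$ multiple of the $j$-th fan vector, reproducing the relation~(\ref{equation_ray_vectors2}).

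The main obstacle is bookkeeping the conventions so that they match the statement: the sum in~(\ref{equation_ray_vectors2}) runs over $j > k$ rather than $j < k$, and for $k = m$ the sum is empty, so that $\mathbf{w}_{i_m} = -\mathbf{v}_{i_m}$. This indicates that the Bott tower should be oriented so that its base corresponds to $s_{i_m}$, with successive stages added by the Bott--Samelson projections that forget the \emph{first} factor rather than the last. Once this orientation is fixed and the $T$-equivariant line bundles are tracked consistently through the quotients, the Cartan-integer formula~(\ref{equation_ray_vectors2}) drops out from a direct root-system computation, and the structure of the cones described in~(\ref{equation_ray_vectors}) follows from the general Bott-manifold picture.
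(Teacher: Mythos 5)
Your overall route coincides with the paper's own sketch (the paper quotes the result from \cite[Theorem~4.23]{LMP_torus_orbit_closures_book_chapter} and justifies it exactly via the chain: distinct letters $\Rightarrow$ $X_w \cong$ Bott--Samelson variety $\Rightarrow$ iterated $\C P^1$-bundle whose fan is computed by Grossberg--Karshon), but two of the details you added are genuinely wrong. First, your opening step --- ``$X_w$ is smooth, so the Bott--Samelson desingularization is an isomorphism'' --- is a false implication. A proper birational morphism onto a smooth variety need not be an isomorphism: take $w = s_1s_2s_1$ in type $\dyn{A}_2$, a reduced word for $w_0$, so $X_w = G/B$ is smooth; the Bott--Samelson variety $Z_{(1,2,1)}$ is a Bott tower, hence toric, while $G/B$ for $\mathrm{SL}_3(\C)$ is not toric (its automorphism group contains no $3$-dimensional torus), so the birational Bott--Samelson morphism cannot be an isomorphism. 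What the references (Fan, Karuppuchamy) actually use is the distinctness of the letters: then the Bott--Samelson map is bijective onto $X_w$, and one concludes by Zariski's main theorem. Smoothness alone, which Theorem~\ref{thm_toric_Schubert_distinct} also gives, is not the operative hypothesis.

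Second, your orientation analysis of the tower is backwards, and as a result your Cartan indices come out transposed. In the fan of a $\C P^1$-bundle $\mathbb{P}(\mathcal{O}\oplus L)$ over a toric base it is the \emph{base} pair of rays that acquires the twist, not the fiber pair: for the Hirzebruch surface $F_a$ the rays are $e_1$, $-e_1+ae_2$, $e_2$, $-e_2$, with the fiber pair summing to zero. Hence the \emph{standard} tower --- base $P_{i_1}/B$, with forgetful maps dropping the \emph{last} factor --- already produces relations summing over $j>k$, and $\mathbf{w}_{i_m}=-\mathbf{v}_{i_m}$ holds because the last stage's twist is recorded in the earlier pairs rather than its own; no reorientation is needed. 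Your proposed projections forgetting the \emph{first} factor are not even well-defined on $P_{i_1}\times_B\cdots\times_B P_{i_m}/B$, since replacing $p_1$ by $p_1b$ forces $p_2\mapsto b^{-1}p_2$. Relatedly, the coefficient $-c_{i_j,i_k}$ in \eqref{equation_ray_vectors2} arises by pairing the character $-\alpha_{i_j}$ of the stage-$j$ twisting bundle against the coroot $\alpha_{i_k}^\vee$ of an \emph{earlier} letter, i.e.\ $\langle -\alpha_{i_j},\alpha_{i_k}^\vee\rangle = -c_{i_j,i_k}$; your recipe of transporting the character $-\alpha_{i_k}$ through the reflections $s_{i_j}$ yields $-c_{i_k,i_j}$, the transpose. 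In simply-laced types this is invisible, but in multiply-laced types it is exactly the distinction the theorem encodes: in type $\dyn{B}_2$ it would interchange $X_{s_1s_2}\cong F_1$ and $X_{s_2s_1}\cong F_2$, which the paper's own example (and Remark~\ref{rema:1}) shows are non-isomorphic. So the skeleton is right, but both the isomorphism step and the coefficient computation need to be repaired.
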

We briefly explain how to obtain Theorem~\ref{thm_char_matrix_of_toric_Schubert}. 
Suppose that $w = s_{i_1} \cdots s_{i_m}$ be a reduced decomposition for $w \in W$ consisting of distinct letters. 
Then the Schubert variety $X_w$ is isomorphic to the \emph{Bott--Samelson variety} corresponding to the decomposition $(i_1,..,i_m)$. Here, a Bott--Samelson variety is a smooth projective variety that can be understood as the total space of an iterated $\C P^1$-bundle whose bundle structure is decided by the decomposition $(i_1,\dots,i_m)$. Moreover, such a Bott--Samelson variety is also toric and its fan structure is described in the paper by Grossberg and Karshon (see~\cite[\S 3.7]{GK94Bott}).

We call the right $m \times m$ submatrix in~\eqref{equation_ray_vectors} the \emph{reduced characteristic matrix}. 
Recall that the Cartan integers $c_{i,j}$ can be read directly from the Dynkin diagram as follows:
\begin{center}
\setlength{\tabcolsep}{20pt}
\begin{tabular}{ccc}
\begin{tikzpicture}[scale =.5, baseline=-.5ex]
\tikzset{every node/.style={scale=0.7}}

\node[Dnode, label=below:{$i$}] (2) {};
\node[Dnode, label=below:{$j$}] (3) [right=of 2] {};

\draw (2)-- (3);

\end{tikzpicture}&
\begin{tikzpicture}[scale =.5, baseline=-.5ex]
\tikzset{every node/.style={scale=0.7}}

\node[Dnode, label=below:{$i$}] (2) {};
\node[Dnode, label=below:{$j$}] (3) [right=of 2] {};

\draw[double line] (2)--(3);

\end{tikzpicture}&
\begin{tikzpicture}[scale =.5, baseline=-.5ex]
\tikzset{every node/.style={scale=0.7}}

\node[Dnode,  label=below:{$i$}] (2) {};
\node[Dnode, label=below:{$j$}] (3) [right=of 2] {};

\draw[triple line] (2)--(3);
\draw (2)--(3);

\end{tikzpicture}
\\
$c_{i,j} = -1$
&$c_{i,j} = -2$
& $c_{i,j} = -3$
\\
$c_{j,i} = -1$
&$c_{j,i} = -1$
& $c_{j,i} = -1$
\end{tabular}
\end{center}
Here, we notice that an `arrow' on the Dynkin diagram represents the lengths of two roots. Indeed, an arrow points to the shorter of the two roots. (Hence, one can also consider an arrow as an inequality comparing the length of two roots.) In the above diagrams, we have $\|\alpha_{i}\| > \|\alpha_{j}\|$ for two simple roots $\alpha_i$ and $\alpha_j$ associated to two vertices $i$ and $j$ in the Dynkin diagram.
\begin{example}\label{example_char_matrix_s31452}
Suppose that $G$ is of type $\dyn{A}_5$. Let $w = s_3s_1s_4s_5s_2$. 
Then the reduced characteristic matrix in~\eqref{equation_ray_vectors} 
is given as follows. 

\[
\begin{bmatrix}
-1 & 0 & 0 & 0 & 0 \\
0 & -1 & 0 & 0 & 0 \\
1 & 0 & -1 & 0 & 0 \\
0 & 0 & 1 & -1 & 0 \\
1 & 1 & 0 & 0 & -1
\end{bmatrix}
\]
Since $(i_1,i_2,i_3,i_4,i_5)=(3,1,4,5,2)$, the column vectors above are $\mathbf{w}_3, \mathbf{w}_1, \mathbf{w}_4, \mathbf{w}_5, \mathbf{w}_2$ from the left.  Therefore we have 
\[
\mathbf{v}_3+\mathbf{w}_3=\mathbf{v}_4+\mathbf{v}_2,\quad \mathbf{v}_1+\mathbf{w}_1=\mathbf{v}_2,\quad \mathbf{v}_4+\mathbf{w}_4=\mathbf{v}_5,\quad \mathbf{v}_5+\mathbf{w}_5=\mathbf{0}, \quad\mathbf{v}_2+\mathbf{w}_2=\mathbf{0}.
\]
\end{example}

\begin{example} 
Suppose that $G$ is of type $\dyn{B}_2$. As computed in Example~\ref{example_toric_B2}, there are five toric Schubert varieties. Considering toric Schubert varieties of dimension $2$, we obtain the reduced characteristic matrices in~\eqref{equation_ray_vectors} as follows:
\begin{center}
\begin{tabular}{cc}
$s_1s_2$ & \qquad\qquad$s_2s_1$ \\
\(
\begin{bmatrix}
-1 & 0 \\ 1 & -1
\end{bmatrix}
\)
& \qquad\qquad\(
\begin{bmatrix}
-1 & 0 \\ 2 & -1
\end{bmatrix}
\)
\end{tabular}
\end{center}
For $w = s_1s_2$, the $(2,1)$-entry of the reduced characteristic matrix is $-c_{i_2,i_1} = -c_{2,1} = -(-1)=1$.
Moreover, for $w = s_2s_1$, the $(2,1)$-entry of the reduced characteristic matrix is $-c_{i_2,i_1} = -c_{1,2} = -(-2)=2$.
Therefore,
\[
\mathbf{v}_1+\mathbf{w}_1=\mathbf{v}_2,\quad \mathbf{v}_2+\mathbf{w}_2=\mathbf{0}
\]
in the former case and 
\[
\mathbf{v}_2+\mathbf{w}_2=2\mathbf{v}_1,\quad \mathbf{v}_1+\mathbf{w}_1=\mathbf{0}
\]
in the latter case. 
\end{example}

\subsection{Fano or weak Fano toric varieties}
There is a combinatorial way to determine whether a smooth compact toric variety is Fano or weak Fano.
For a fan $\Sigma$, a subset $R$ of the primitive ray vectors is called a \defi{primitive collection} of~$\Sigma$ if 
\[
\Cone(R) \notin \Sigma
\quad \text{ but }\quad \Cone(R \setminus \{\mathbf{u}\}) \in \Sigma \quad \text{ for every }\mathbf{u} \in R.
\] 
We denote by $\PC(\Sigma)$ the set of primitive collections of $\Sigma$.
\begin{example}
For the toric Schubert variety $X_w$ in Theorem~\ref{thm_char_matrix_of_toric_Schubert}, the primitive ray vectors of the fan $\Sigma$ of $X_w$ are the column vectors $C:=\{ \mathbf{v}_{i_1},\dots,\mathbf{v}_{i_m},\mathbf{w}_{i_1},\dots,\mathbf{w}_{i_m}\}$ in \eqref{equation_ray_vectors}. 
Then 
\begin{equation}\label{equation_PC_of_Bott}
\PC(\Sigma) = \{ \{\mathbf{v}_{i_k},\mathbf{w}_{i_k}\} \mid k =1,\dots,m\}
\end{equation}
because a subset of $C$ forms a cone if and only if it does not contain both $\mathbf{v}_{i_k}$ and $\mathbf{w}_{i_k}$ for each $k$ as mentioned in Theorem~\ref{thm_char_matrix_of_toric_Schubert}.   
\end{example}

For a primitive collection $R = \{\mathbf{u}'_1, \dots,\mathbf{u}'_{\ell}\}$, we get $\mathbf{u}'_1 + \cdots+\mathbf{u}'_{\ell}=\boldsymbol{0}$ or there exists a unique cone~$\sigma$ of positive dimension such that $\mathbf{u}'_1 + \cdots+\mathbf{u}'_{\ell}$ is in the interior of $\sigma$. That is, 
\begin{equation}\label{eq:primitive}
	\mathbf{u}'_1 + \cdots+\mathbf{u}'_{\ell}=\begin{cases}
	\boldsymbol{0}, &\text{ or }\\
	a_1 \mathbf{u}_1 + \cdots+ a_{s} \mathbf{u}_{s},&{}
	\end{cases}
\end{equation}
where $\mathbf{u}_1,\dots,\mathbf{u}_{s}$ are the primitive generators of $\sigma$ and $a_1,\dots,a_{s}$ are positive integers.
We call~\eqref{eq:primitive} a \emph{primitive relation}, and the \emph{degree} $\deg R$ of a primitive collection $R$ is defined to be 
\begin{equation} \label{eq:degree}
\deg R  \colonequals 
\begin{cases}
\ell &  \text{ if }\mathbf{u}'_1 + \cdots+\mathbf{u}'_{\ell} = \mathbf{0}, \\
\ell - (a_1+\cdots+a_s) &\text{ otherwise}. 
\end{cases}
\end{equation}
Batyrev~\cite{Batyrev} provided a criterion for a projective toric variety to be Fano or weak Fano.
\begin{proposition}[{\cite[Proposition~2.3.6]{Batyrev}}]\label{prop:batyrev}
A smooth compact toric  variety $X$ is Fano \textup{(}respectively, weak Fano\textup{)} if and only if $\mathrm{deg}(R)>0$ \textup{(}respectively, $\mathrm{deg}(R)\geq 0$\textup{)} for every primitive collection $R$ of the fan $\Sigma$ of $X$. 
\end{proposition}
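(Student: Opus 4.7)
The plan is to invoke Kleiman's numerical criterion together with the toric description of the Mori cone in terms of primitive relations. By definition $X$ is Fano (resp.\ weak Fano) exactly when $-K_X$ is ample (resp.\ nef), and Kleiman's criterion reduces this to checking $-K_X \cdot [C] > 0$ (resp.\ $\geq 0$) on every nonzero class $[C]$ of the closed Mori cone $\overline{\mathrm{NE}}(X)$, or equivalently on a generating set of its extremal rays. I would use throughout the standard toric identity $-K_X = \sum_\rho D_\rho$, where the sum runs over the rays $\rho$ of $\Sigma$ and $D_\rho$ is the corresponding torus-invariant prime divisor, so that for any $1$-cycle class $[C]$ one has $-K_X \cdot [C] = \sum_\rho (D_\rho \cdot [C])$.

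Next, to each primitive collection $R = \{\mathbf{u}'_1, \ldots, \mathbf{u}'_\ell\}$ I would associate the $1$-cycle class $[C_R] \in N_1(X)$ determined by the primitive relation~\eqref{eq:primitive}. Using the standard toric intersection formulas on a smooth projective toric variety, the pairing $D_\rho \cdot [C_R]$ is $+1$ for each $\rho \in R$, equals $-a_i$ for the ray generated by $\mathbf{u}_i$ appearing on the right-hand side of~\eqref{eq:primitive}, and vanishes otherwise. Summing over all rays gives
\[
-K_X \cdot [C_R] \;=\; \ell - (a_1 + \cdots + a_s) \;=\; \deg R,
\]
with the convention that the parenthesized sum is zero when the primitive relation lands at the origin.

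The heart of the argument, and the main obstacle, is Batyrev's key structural result that for a smooth projective toric variety every extremal ray of $\overline{\mathrm{NE}}(X)$ is spanned by a class of the form $[C_R]$ for some primitive collection $R$ of $\Sigma$. To prove this I would take an extremal contraction, inspect the primitive ray vectors of $\Sigma$ whose corresponding divisors meet the contracted curve positively, and argue that this set of rays does not span a cone of $\Sigma$ but every proper subset does, so it is a primitive collection whose associated primitive relation recovers the contracted class. Granting this generation statement, the proposition follows immediately from Kleiman's criterion combined with the computation $-K_X \cdot [C_R] = \deg R$: testing ampleness (resp.\ nefness) of $-K_X$ on all extremal rays of $\overline{\mathrm{NE}}(X)$ is equivalent to the condition $\deg R > 0$ (resp.\ $\deg R \geq 0$) ranging over all primitive collections $R$ of $\Sigma$.
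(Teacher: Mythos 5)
You should note at the outset that the paper contains no proof of this proposition at all: it is imported verbatim from Batyrev \cite[Proposition~2.3.6]{Batyrev}, so there is no internal argument to compare against, and your proposal is in effect a reconstruction of Batyrev's own route --- which is indeed the standard one (Kleiman's criterion, the identity $-K_X=\sum_\rho D_\rho$, and the description of the Mori cone via primitive relations). Your degree computation is essentially correct: identifying $N_1(X)$ with the lattice of integer relations $\sum_\rho b_\rho\mathbf{u}_\rho=0$ among the ray generators, with $D_\rho\cdot[C]=b_\rho$, the primitive relation \eqref{eq:primitive} produces the tuple you describe, and $-K_X\cdot[C_R]=\sum_\rho b_\rho=\ell-(a_1+\cdots+a_s)=\deg R$; note only that if a ray occurred both in $R$ and among the generators of $\sigma$, your per-divisor values $+1$ and $-a_i$ would have to be merged into $1-a_i$, though the total sum, hence $\deg R$, is unaffected.

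There is, however, one genuine gap in the logical structure as you wrote it. Your reduction uses only the statement that every \emph{extremal} ray of $\overline{\mathrm{NE}}(X)$ is spanned by some $[C_R]$. That yields the implication ``$\deg R>0$ for all primitive collections $\Rightarrow$ $-K_X$ ample,'' but \emph{not} the converse: ampleness of $-K_X$ gives positivity on $\overline{\mathrm{NE}}(X)\setminus\{0\}$, and to conclude $\deg R>0$ for \emph{every} primitive collection --- not merely those whose classes span extremal rays --- you must also know that each class $[C_R]$ lies in $\overline{\mathrm{NE}}(X)\setminus\{0\}$. This is the other half of Batyrev's cone theorem, namely $\overline{\mathrm{NE}}(X)=\sum_R \R_{\geq 0}\,[C_R]$ with every primitive class effective and nonzero; it does not follow from your extremal-contraction sketch and must be argued separately (Batyrev's original proof of this cone description had gaps, later completed by Cox and von Renesse). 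A second, smaller caveat: the proposition as stated says ``compact,'' but Kleiman's criterion and the Mori-cone machinery you invoke require projectivity, which is how Batyrev states the result. This is harmless for the paper's application, since Fano forces projectivity and the toric Schubert varieties in question are Bott manifolds, hence projective, but your proof should either assume projectivity explicitly or handle the complete non-projective case by testing $-K_X$ on torus-invariant curves (walls) instead.
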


A smooth compact toric variety is called a \emph{Bott manifold} if it is isomorphic to the total space of a Bott tower that is an iterated $\C P^1$-bundle starting with a point, where each $\C P^1$-bundle is the projectivization of Whitney sum of two complex line bundles.  It is known that a smooth compact toric variety $X$ is a Bott manifold if and only if the fan $\Sigma$ of $X$ has primitive collections $\PC(\Sigma)$ of the form in~\eqref{equation_PC_of_Bott}.  Theorem~\ref{thm_char_matrix_of_toric_Schubert} says that a toric Schubert variety is a Bott manifold. We refer the reader to~\cite{GK94Bott} for Bott towers, and to~\cite{MasudaPanov08} for details on Bott manifolds. 
%%%%%%%%
\section{Toric Schubert varieties and directed graphs}\label{section_toric_Schubert_directed_graphs}

In this section, we associate an edge-labeled digraph $\G_w$ with a toric Schubert variety $X_w$ and prove that two toric Schubert varieties are isomorphic as varieties if and only if the associated edge-labeled digraphs are isomorphic (Theorem~\ref{theo:graph_determines_Xw}).  We also give a simple criterion of when $X_w$ is Fano or weak Fano in terms of $\G_w$.  

As before, let $W$ be the Weyl group of a simple Lie group $G$ of rank $r$ and $s_1,\dots,s_r$ the simple reflections in $W$. Suppose that a Schubert variety $X_w$ $(w\in W)$ is toric.  Then simple reflections in a reduced decomposition $w=s_{i_1}\cdots s_{i_m}$ for $w$ are mutually distinct by Theorem~\ref{thm_toric_Schubert_distinct}. 

\begin{definition}
For $w=s_{i_1}\cdots s_{i_m}$ above, we define an edge-labeled digraph $\G_{w}$ as follows.
\begin{itemize}
\item $V(\G_{w}) = \{i_1,\dots,i_m\}$; and
\item $(i_j,i_k) \in E(\G_{w})$ if and only if $c_{i_j,i_k} \neq 0$ for $1 \leq k < j \leq m$. 
\end{itemize}
Note that $1\le -c_{i_j, i_k}\le 3$ if $c_{i_j,i_k} \neq 0$.  We assign the positive integer $-c_{i_j, i_k}$ to the directed edge $(i_k,i_j) \in E(\G_{w})$.  When we draw $\G_w$, we omit the label $1$ for simplicity.   
\end{definition} 

Suppose that $c_{{i_k},i_{k+1}}=0$ for some $k\in [m-1]$, i.e. $s_{i_k}$ and $s_{i_{k+1}}$ commute.  Then  
replacing the factor $s_{i_k}s_{i_{k+1}}$ in the reduced decomposition $w=s_{i_1}\cdots s_{i_m}$ by $s_{i_{k+1}}s_k$ is called a {\it $2$-move}.  Word Property \cite[Theorem~3.3.1]{BB05Combinatorics} says that any two reduced decompositions for our $w$ are related via a sequence of $2$-moves.  This implies that the edge-labeled digraph $\G_w$ does not depend on the choice of the reduced decomposition for $w$.  

\begin{example} \label{exam:digraph_Gi}
\begin{enumerate}
\item \label{example_3.2_1} Let $G$ be of type $\dyn{A}$. 
For $w=s_2s_1s_3s_4=s_2s_3s_1s_4=s_2s_3s_4s_1$ and $w'=s_3s_4s_2s_1=s_3s_2s_4s_1=s_3s_2s_1s_4$, we have %the following digraphs:
\begin{center}
$\mathcal G_{w} = $
\begin{tikzpicture}[node/.style={circle,draw, fill=white!20, inner sep = 0.25mm}, baseline = -0.5ex]
\node[node] (1) at (1,0) {$1$};
\node[node] (2) at (2,0) {$2$};
\node[node] (3) at (3,0) {$3$};
\node[node] (4) at (4,0) {$4$};
\draw[->] (1) to (2);
\draw[<-] (2) to (3);
\draw[<-] (3) to (4);
\end{tikzpicture}
$\quad \quad$
$\mathcal G_{w'} = $
\begin{tikzpicture}[node/.style={circle,draw, fill=white!20, inner sep = 0.25mm}, baseline = -0.5ex]
\node[node] (1) at (1,0) {$1$};
\node[node] (2) at (2,0) {$2$};
\node[node] (3) at (3,0) {$3$};
\node[node] (4) at (4,0) {$4$};
\draw[->] (1) to (2);
\draw[->] (2) to (3);
\draw[<-] (3) to (4);
\end{tikzpicture}
\end{center}
For $w''=s_1s_2s_4s_5=s_1s_4s_2s_5=s_1s_4s_5s_2=s_4s_1s_2s_5=s_4s_1s_5s_2=s_4s_5s_1s_2$, $\mathcal G_{w''}$ is not connected as follows: 
\begin{center} 
$\mathcal G_{w''} = $
\begin{tikzpicture}[node/.style={circle,draw, fill=white!20, inner sep = 0.25mm}, baseline = -0.5ex]
\node[node] (1) at (1,0) {$1$};
\node[node] (2) at (2,0) {$2$};
\node[node] (3) at (3,0) {$4$};
\node[node] (4) at (4,0) {$5$};
\draw[<-] (1) to (2);
\draw[<-] (3) to (4);
\end{tikzpicture}
\end{center}

\item \label{example_3.2_2} Let $G$ be of type $\dyn{C}_3$. For $w=s_1s_2s_3$ and $w' =s_3s_2s_1$, we have 
\[
\G_{w} = 
\begin{tikzpicture}[node/.style={circle,draw, fill=white!20, inner sep = 0.25mm}, baseline = -0.5ex]
\node[node] (1) at (1,0) {$1$};
\node[node] (2) at (2,0) {$2$};
\node[node] (3) at (3,0) {$3$};
\draw[<-] (1) to (2);
\draw[<-] (2) to node[above, midway] {$2$}  (3) ;
\end{tikzpicture}
\quad \quad 
\G_{w'} = 
\begin{tikzpicture}[node/.style={circle,draw, fill=white!20, inner sep = 0.25mm}, baseline = -0.5ex]
\node[node] (1) at (1,0) {$1$};
\node[node] (2) at (2,0) {$2$};
\node[node] (3) at (3,0) {$3$};
\draw[->] (1) to (2);
\draw[->] (2) to (3) ;
\end{tikzpicture}
\]

\item Let $G$ be of type $\dyn{D}_4$. For $w=s_2s_1s_3s_4=s_2s_3s_1s_4=s_2s_3s_4s_1=s_2s_1s_4s_3=s_2s_4s_1s_3=s_2s_4s_3s_1$, we have 
\[
\G_{w} = 
\begin{tikzpicture}
[node/.style={circle,draw, fill=white!20, inner sep = 0.25mm}, baseline = -0.5ex]
\node[node] (1) at (0,0) {$2$};
\node[node] (2) at (60:1) {$3$};
\node[node] (3) at (-60:1) {$4$};
\node[node] (4) at (180:1) {$1$};

\draw[<-] (1) to (2) ;
\draw[<-] (1) to (3) ;
\draw[<-] (1) to (4) ;
\end{tikzpicture} 
\]
\end{enumerate}
\end{example}

\begin{remark}
The underlying graph of $\G_w$ is an induced subgraph of the Dynkin diagram of $G$.
\end{remark} 

The edge-labeled digraph $\G_w$ has the same information as the primitive relations  \eqref{equation_ray_vectors2}.  Indeed, $\G_w$ can be obtained from the primitive relations  \eqref{equation_ray_vectors2} if we take a directed egde $(i_j,i_k)$ whenever $c_{i_j,i_k}$ in \eqref{equation_ray_vectors2} is non-zero and put the label $-c_{i_j,i_k}$ on it. Conversely, it is clear that the primitive relations \eqref{equation_ray_vectors2} can be obtained from $\G_w$ through this correspondence.  

We say that two edge-labeled digraphs are isomorphic if there is a bijection between their vertices preserving directed edges and labels on the edges.  For example, $\G_w$ and $\G_{w'}$ in Example~\ref{exam:digraph_Gi}\eqref{example_3.2_1} are isomorphic but those in Example~\ref{exam:digraph_Gi}\eqref{example_3.2_2} are not isomorphic as edge-labeled digraphs although they are isomorphic as digraphs.  

\begin{theorem}\label{theo:graph_determines_Xw}
Let $W$ and $W'$ be the Weyl groups of simple Lie groups $G$ and $G'$, respectively. Then toric Schubert varieties $X_w$ $(w\in W)$ and $X_{w'}$ $(w'\in W')$ are isomorphic as varieties if and only if $\G_{w}$ and $\G_{w'}$ are isomorphic as edge-labeled digraphs. 
\end{theorem}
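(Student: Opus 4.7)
The plan is to realize the edge-labeled digraph $\mathcal G_w$ as an invariant of the fan $\Sigma_w$ of $X_w$ that can be read off from the primitive collections and their primitive relations~\eqref{equation_ray_vectors2}. For the translation between varieties and fans, I will use that for smooth projective toric varieties any abstract variety isomorphism can be modified by an automorphism to a toric isomorphism (since all maximal tori of the automorphism group are conjugate); therefore ``$X_w\cong X_{w'}$ as varieties'' is equivalent to ``$\Sigma_w\cong \Sigma_{w'}$ as fans with lattice.''

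For the backward direction, given an isomorphism $\phi\colon \mathcal G_w\to \mathcal G_{w'}$, I would fix reduced decompositions and ray vectors $\mathbf v_{i_k},\mathbf w_{i_k}$ and $\mathbf v_{i'_k},\mathbf w_{i'_k}$ as in Theorem~\ref{thm_char_matrix_of_toric_Schubert}, and define a lattice isomorphism $\Psi\colon \Z^m\to \Z^m$ by $\Psi(\mathbf v_{i_k})\colonequals \mathbf v_{\phi(i_k)}$. Rewriting \eqref{equation_ray_vectors2} as $\mathbf w_{i_k}=-\mathbf v_{i_k}+\sum_{j>k}(-c_{i_j,i_k})\mathbf v_{i_j}$, the equality $\Psi(\mathbf w_{i_k})=\mathbf w_{\phi(i_k)}$ would follow by matching terms using the label-preserving bijection of edges incident to $i_k$ and $\phi(i_k)$ induced by $\phi$. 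Since $\Psi$ then respects the pairing $\{\mathbf v_{i_k},\mathbf w_{i_k}\}\leftrightarrow \{\mathbf v_{\phi(i_k)},\mathbf w_{\phi(i_k)}\}$, and since cones of a Bott-manifold fan are characterized by avoiding at least one ray of each pair (cf.~\eqref{equation_PC_of_Bott}), $\Psi$ induces an isomorphism of fans, hence $X_w\cong X_{w'}$.

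For the forward direction, assuming $\Sigma_w\cong \Sigma_{w'}$, it suffices to reconstruct $\mathcal G_w$ from $\Sigma_w$ in a canonical way. The primitive collections of $\Sigma_w$ are the pairs $\{\mathbf v_{i_k},\mathbf w_{i_k}\}$ by \eqref{equation_PC_of_Bott}, so I take these pairs as the vertex set of the reconstructed digraph; under the bijection $k\leftrightarrow i_k$ this matches the vertex set of $\mathcal G_w$. The primitive relation of pair $k$ given by \eqref{equation_ray_vectors2} is intrinsic to the fan and exhibits, for each $j>k$ with $c_{i_j,i_k}\ne 0$, exactly one ray of pair $j$ appearing with coefficient $-c_{i_j,i_k}$. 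Declaring a directed edge from pair $j$ to pair $k$ with label equal to this coefficient in every such case then reproduces $\mathcal G_w$ precisely, so the graph is determined by the fan up to isomorphism, yielding $\mathcal G_w\cong \mathcal G_{w'}$.

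The main obstacle will be the apparent ambiguity in distinguishing the two rays of a pair (which is $\mathbf v_{i_k}$ and which is $\mathbf w_{i_k}$). Only the $\mathbf v$'s appear on the right of \eqref{equation_ray_vectors2}, so one might hope to single out $\mathbf v_{i_k}$ as the unique ray of its pair appearing in some other primitive relation; but this fails exactly when $i_k$ has no outgoing edges in $\mathcal G_w$. The key observation is that the reconstruction above never asks us to name which of the two rays is $\mathbf v_{i_k}$ — only the pair-membership of each contributing ray is used — so the ambiguity is harmless and the edge data is well defined regardless.
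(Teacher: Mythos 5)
Your proposal is correct and takes essentially the same route as the paper: both directions work on the fan via the primitive collections $\{\mathbf v_{i_k},\mathbf w_{i_k}\}$ and the primitive relations \eqref{equation_ray_vectors2}, and your backward direction constructs exactly the paper's lattice map $\mathbf v_{i_k}\mapsto \mathbf v'_{i'_{f(k)}}$ and checks it on the $\mathbf w$'s and on maximal cones in the same way. The only (harmless) differences are that you make explicit the standard fact that an abstract isomorphism of smooth projective toric varieties can be promoted to a fan isomorphism, which the paper uses silently, and that in the forward direction you replace the paper's sign argument---which rules out $\varphi(\mathbf v_{i_j})=\mathbf w'_{i'_{f(j)}}$ when $c_{i_j,i_k}\neq 0$ by comparing nonnegative components in \eqref{eq:sum_vw3}---with the equivalent observation that the uniqueness of primitive relations makes the pair-membership and coefficient data fan-intrinsic, so the $\mathbf v$-versus-$\mathbf w$ ambiguity never needs to be resolved.
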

\begin{proof}
First, we prove the \lq\lq only if\rq\rq\ part.  
Suppose that $X_w$ and $X_{w'}$ are isomorphic as varieties.  Then there is an isomorphism $\varphi$ from the fan $\Sigma$ of $X_w$ to the fan $\Sigma'$ of $X_{w'}$.  Let  
\begin{equation} \label{eq:ray_vectors_vw}
\text{$\{\mathbf v_{i_1},\dots,\mathbf v_{i_m}, \mathbf w_{i_1},\dots,\mathbf w_{i_m}\}$ and $\{\mathbf v'_{i'_1},\dots,\mathbf v'_{i'_m}, \mathbf w'_{i'_1},\dots,\mathbf w'_{i'_m}\}$}
\end{equation} 
be the ray generators (i.e., column vectors of the matrix in~\eqref{equation_ray_vectors}) of the fans $\Sigma$ and $\Sigma'$, respectively. Since the isomorphism $\varphi$ preserves the primitive collections,  there exists a bijection $f \colon [m] \to [m]$ such that 
\begin{equation}\label{eq_sum_vi_wi_and_image_f}
\{\varphi(\mathbf v_{i_k}), \varphi(\mathbf w_{i_k}) \}=\{\mathbf v'_{i'_{f(k)}}, \mathbf w'_{i'_{f(k)}}\}\qquad (\forall k\in [m]).
\end{equation}
Therefore, 
\begin{equation} \label{eq:sum_vw1}
\varphi(\mathbf v_{i_k} + \mathbf w_{i_k}) =\varphi(\mathbf v_{i_k})+\varphi(\mathbf w_{i_k})= \mathbf v'_{i'_{f(k)}} + \mathbf w'_{i'_{f(k)}}=\sum_{f(j)>f(k)}\left(-c_{i'_{f(j)},i'_{f(k)}}\right)\mathbf v'_{i'_{f(j)}}
\end{equation}
while sending the identity \eqref{equation_ray_vectors2} by $\varphi$, we obtain 
\begin{equation} \label{eq:sum_vw2}
\varphi(\mathbf v_{i_k} + \mathbf w_{i_k}) =\sum_{j > k} \left(-c_{i_j,i_k}\right) \varphi(\mathbf v_{i_j}).  
\end{equation}
It follows from \eqref{eq:sum_vw1} and \eqref{eq:sum_vw2} that 
\begin{equation} \label{eq:sum_vw3}
\sum_{f(j)>f(k)}\left(-c_{i'_{f(j)},i'_{f(k)}}\right)\mathbf v'_{i'_{f(j)}}
=\sum_{j > k} \left(-c_{i_j,i_k}\right) \varphi(\mathbf v_{i_j}).
\end{equation}
Here $\varphi(\mathbf v_{i_j})=\mathbf v'_{i'_{f(j)}}$ or $\mathbf w'_{i'_{f(j)}}$ by \eqref{eq_sum_vi_wi_and_image_f}.  When $c_{i_j,i_k}\not=0$, the latter case does not occur because if it occurs, the vector at the right hand side in \eqref{eq:sum_vw3} has a negative component coming from $\mathbf w'_{i'_{f(j)}}$ while all components of the vector at the left hand side in \eqref{eq:sum_vw3} are nonnegative, a contradiction. 
Thus \eqref{eq:sum_vw3} implies that 
\begin{equation} \label{eq:cij_f} 
c_{i_j,i_k} = c_{i_{f(j)}',i_{f(k)}'} \quad \text{ for all } 1 \leq k < j \leq m
\end{equation}
because $\mathbf v'_{i'_1},\dots,\mathbf v'_{i'_m}$ are linearly independent.   
Therefore, the bijection $f\colon [m]\to [m]$ provides an isomorphism from $\G_w$ to $\G_{w'}$ as edge-labeled digraphs.  

Now we prove the \lq\lq if\rq\rq\ part.  Suppose that $\G_w$ and $\G_{w'}$ are isomorphic as edge-labeled digraphs.  Then there is a bijection $f\colon [m]\to [m]$ which satisfies \eqref{eq:cij_f}.  We shall observe that the linear automorphism $\varphi$ of $\R^m$ defined by $\varphi(\mathbf{v}_{i_j})=\mathbf{v}'_{i'_{f(j)}}$ for $j\in [m]$ provides an isomporphism from the fan $\Sigma$ to the fan $\Sigma'$.  First we note 
\begin{equation} \label{eq:varphi(w)}
\varphi(\mathbf{w}_{i_k})=\mathbf{w}'_{i'_{f(k)}} \quad (\forall k\in [m]).
\end{equation}
Indeed, \eqref{eq:sum_vw3} holds by \eqref{eq:cij_f} and it follows from \eqref{equation_ray_vectors2} and \eqref{eq:sum_vw3} that we have    
\[
\begin{split}
\varphi(\mathbf{v}_{i_k})+\varphi(\mathbf{w}_{i_k})&=\varphi(\mathbf{v}_{i_k}+\mathbf{w}_{i_k})=\sum_{j > k} \left(-c_{i_j,i_k}\right) \varphi(\mathbf v_{i_j})\\
&=\sum_{f(j)>f(k)}\left(-c_{i'_{f(j)},i'_{f(k)}}\right)\mathbf v'_{i'_{f(j)}}=\mathbf v'_{i'_{f(k)}} + \mathbf w'_{i'_{f(k)}}. 
\end{split}
\]
Since $\varphi(\mathbf{v}_{i_k})=\mathbf{v}'_{i'_{f(k)}}$ by definition of $\varphi$, the identity above implies \eqref{eq:varphi(w)} as we desired.  Any maximal cone in $\Sigma$ is spanned by $\mathbf{u}_{i_1},\dots,\mathbf{u}_{i_m}$ where $\mathbf{u}$ denotes either $\mathbf{v}$ or $\mathbf{w}$, and the same is true for $\Sigma'$.  Therefore, $\varphi$ sends maximal cones in $\Sigma$ to those in $\Sigma'$ bijectively.  This means that $\varphi$ is an isomorphism from $\Sigma$ to $\Sigma'$.  Hence $X_w$ and $X_{w'}$ are isomorphic as varieties.   
\end{proof}

The edge-labeled digraph $\G_{w}$ encodes all the geometrical information of the toric Schubert variety $X_w$ by Theorem~\ref{theo:graph_determines_Xw}.  We shall give a simple criterion of when $X_w$ is (weak) Fano in terms of $\G_w$.  
For each vertex $i_k \in [m]$ of $\G_{w}$, we define the \emph{indegree} $\indeg(i_k)$ of $i_k$ to be the sum of labels of the edges going into $i_k$:
\begin{equation}\label{eq:outdegree}
\indeg(i_k) := \sum_{(i_j,i_k) \in E(\G_w)} (-c_{i_j,i_k})= \sum_{j>k} (-c_{i_j,i_k}). 
\end{equation}

\begin{theorem}\label{thm_Fano_weak_Fano}
A toric Schubert variety $X_w$ is Fano \textup{(}resp. weak Fano\textup{)} if and only if every vertex of $\G_w$ has indegree at most $1$ \textup{(}resp. $2$\textup{)}.  
\end{theorem}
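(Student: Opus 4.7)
The plan is to invoke Batyrev's criterion (Proposition~\ref{prop:batyrev}) and compute, for each primitive collection of the fan $\Sigma$ of $X_w$, its degree in terms of the edge-labeled digraph $\G_w$. Since $X_w$ is a Bott manifold, the primitive collections of $\Sigma$ are exactly the pairs $R_k := \{\mathbf v_{i_k}, \mathbf w_{i_k}\}$ for $k = 1, \dots, m$, as recorded in \eqref{equation_PC_of_Bott}. So I only need to compute $\deg(R_k)$ for each $k$ and match the result against the criterion.

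The primitive relation already sits inside \eqref{equation_ray_vectors2}:
$$\mathbf v_{i_k} + \mathbf w_{i_k} = \sum_{j > k} (-c_{i_j, i_k})\, \mathbf v_{i_j}.$$
If every $c_{i_j, i_k}$ with $j > k$ vanishes, the right-hand side is $\mathbf 0$ and \eqref{eq:degree} directly gives $\deg(R_k) = 2$. Otherwise, set $J = \{j > k : c_{i_j, i_k} \neq 0\}$. The subset $\{\mathbf v_{i_j} : j \in J\}$ spans a cone of $\Sigma$: by Theorem~\ref{thm_char_matrix_of_toric_Schubert}, any subset of $\{\mathbf v_{i_1}, \dots, \mathbf v_{i_m}\}$ is a face of the maximal cone spanned by all the $\mathbf v_{i_\ell}$'s. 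Linear independence of $\mathbf v_{i_1}, \dots, \mathbf v_{i_m}$ together with positivity of each coefficient $-c_{i_j, i_k}$ for $j \in J$ guarantees that the right-hand side lies in the relative interior of $\sigma := \Cone\{\mathbf v_{i_j} : j \in J\}$, with the $\mathbf v_{i_j}$ ($j\in J$) being the primitive generators of the rays of $\sigma$. So the displayed equation is already the primitive relation of $R_k$ in the sense of \eqref{eq:primitive}, and by \eqref{eq:degree},
$$\deg(R_k) \;=\; 2 - \sum_{j > k}(-c_{i_j, i_k}) \;=\; 2 - \indeg(i_k).$$

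Applying Proposition~\ref{prop:batyrev}, $X_w$ is Fano iff $\deg(R_k) > 0$ for every $k$, i.e.\ $\indeg(i_k) \leq 1$ for every vertex $i_k$ of $\G_w$; and weak Fano iff $\deg(R_k) \geq 0$ for every $k$, i.e.\ $\indeg(i_k) \leq 2$. I do not anticipate a real obstacle here; the only mildly delicate point is verifying that $\{\mathbf v_{i_j} : j \in J\}$ really spans a cone of $\Sigma$ whose relative interior contains $\mathbf v_{i_k} + \mathbf w_{i_k}$, and this falls out directly from the combinatorial description of the maximal cones of $\Sigma$ in Theorem~\ref{thm_char_matrix_of_toric_Schubert}.
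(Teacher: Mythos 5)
Your proposal is correct and follows essentially the same route as the paper's own proof: both identify the primitive collections as the pairs $\{\mathbf v_{i_k},\mathbf w_{i_k}\}$, read off the primitive relation from \eqref{equation_ray_vectors2} to get $\deg(R_k)=2-\indeg(i_k)$, and conclude via Proposition~\ref{prop:batyrev}. Your extra check that $\sum_{j>k}(-c_{i_j,i_k})\mathbf v_{i_j}$ lies in the relative interior of the cone spanned by $\{\mathbf v_{i_j}: j\in J\}$ is a detail the paper leaves implicit, and it is verified correctly.
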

\begin{proof}
Let $\{ \mathbf v_{i_1},\dots,\mathbf{v}_{i_m},\mathbf{w}_{i_1},\dots,\mathbf{w}_{i_m}\}$ be the column vectors of the matrix in~\eqref{equation_ray_vectors}.  Then the primitive collections of the fan of $X_w$ are $\{ \mathbf v_{i_k}, \mathbf w_{i_k} \}$ for $k=1,\dots,m$ and the primitive relations are 
\begin{equation*}\label{eq_sum_vk_wk}
\mathbf v_{i_k} + \mathbf w_{i_k} = \sum_{j > k} (-c_{i_j,i_k})\mathbf{v}_{i_j}. 
\end{equation*}
Therefore, it follows from \eqref{eq:degree} that the degree of the primitive collection $\{\mathbf{v}_{i_k},\mathbf{w}_{i_k}\}$ is given by
\begin{equation*}\label{eq_deg_of_vk_wk}
\deg(\{\mathbf v_{i_k}, \mathbf w_{i_k}\}) = 2 - \sum_{j>k}(-c_{i_j,i_k})=2-\indeg(i_k),
\end{equation*}
where the latter equality follows from~\eqref{eq:outdegree}.  Thus, the theorem follows from Proposition~\ref{prop:batyrev}.
\end{proof}

In type $\dyn{A}$, the edge-labeled digraph $\G_w$ is a union of directed path graphs whose all edges have label $1$. Therefore, we have   
\begin{corollary}
In Type $\dyn{A}$, any toric Schubert variety $X_w$ is weak Fano.  Moreover, it is Fano if and only if 
$\G_w$ has no sink other than leaves.  
\end{corollary}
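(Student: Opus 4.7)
The plan is to apply Theorem~\ref{thm_Fano_weak_Fano} after analyzing what the indegree function looks like for $\G_w$ in type $\dyn{A}$. First I would observe that in the Cartan matrix of type $\dyn{A}$, every off-diagonal entry is $0$ or $-1$, so every edge of $\G_w$ carries the label $1$. Consequently, for any vertex $i_k$ of $\G_w$, the indegree $\indeg(i_k)$ defined in \eqref{eq:outdegree} simply counts the number of directed edges pointing into $i_k$.

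Next, since the underlying undirected graph of $\G_w$ is an induced subgraph of the type $\dyn{A}$ Dynkin diagram, it is a disjoint union of path graphs. In particular, every vertex of $\G_w$ has total degree at most $2$, so $\indeg(i_k) \le 2$ for every $k$. By the weak-Fano criterion of Theorem~\ref{thm_Fano_weak_Fano}, this already gives that $X_w$ is weak Fano.

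For the Fano part, I would unpack when the indegree of a vertex $i_k$ reaches the forbidden value $2$. Because each edge is labeled $1$ and the underlying graph is a union of paths, $\indeg(i_k) = 2$ holds if and only if $i_k$ has exactly two neighbors in the underlying graph and both of the corresponding arrows point into $i_k$; equivalently, $i_k$ is an interior vertex (not a leaf) of its path component and is a sink of $\G_w$. Theorem~\ref{thm_Fano_weak_Fano} then gives that $X_w$ is Fano if and only if $\G_w$ has no sink other than leaves.

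The argument is a direct unpacking of definitions, so there is no real obstacle; the only care needed is the simultaneous use of \emph{two} facts about type $\dyn{A}$ (all labels equal $1$ and all degrees at most $2$) to conclude that the indegree equals the number of incoming edges and is globally bounded by $2$.
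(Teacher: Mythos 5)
Your proof is correct and follows essentially the same route as the paper, which deduces the corollary immediately from the observation that in type $\dyn{A}$ the digraph $\G_w$ is a union of directed path graphs with all edge labels equal to $1$, combined with the indegree criterion of Theorem~\ref{thm_Fano_weak_Fano}. Your write-up merely makes explicit the two facts the paper uses implicitly (labels all $1$, so indegree counts incoming edges; degrees at most $2$, so indegree is at most $2$, with equality exactly at non-leaf sinks).
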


%%%%%
\section{Enumeration of toric Schubert varieties} \label{sect:enumeration}
As is considered in Section~\ref{section_toric_Schubert_directed_graphs}, the graph $\G_{w}$ encodes fruitful geometric information of a toric Schubert variety~$X_w$. In this section, we enumerate the isomorphism classes of (Fano or weak Fano) toric Schubert varieties using $\G_w$ in Proposition~\ref{cor_enumeration_isom_Coxeter_Schubert}.

Let $G$ be a simple Lie group of rank $r$ with Cartan matrix $C = (c_{i,j})_{1 \leq i, j \leq r}$. A bijection $\theta \colon [r] \to [r]$ is called a \emph{Dynkin diagram automorphism} if 
\[
c_{\theta(i),\theta(j)} = c_{i,j}
\]
hold for all $i,j$. For each Lie type, we recall from \cite[\S12.2]{Humphreys78Lie} the group $\Gamma$ of Dynkin diagram automorphism in Table~\ref{table_D_diagram_auto}. 
When the generators of the group $\Gamma$ of Dynkin diagram automorphism can be expressed as elements in $W$, that is, when $ws_iw^{-1}$ is again a simple reflection for all $i \in [r]$, we display the corresponding elements in the third column of the table.
\begin{table}
\begin{tabular}{c|ll}
\toprule
Type & $\Gamma$ & $w$ \\
\midrule 
$\dyn{A}_{r}$ & $\Z_2$ ($r \geq 2$) & $w_0$\\
$\dyn{B}_r$, $\dyn{C}_r$ & $1$ \\ 
$\dyn{D}_r$ & $\begin{cases}
\mathfrak{S}_3 & (r = 4) \\
\Z_2 & (r > 4) 
\end{cases}$ 
	& $w_0$ if $r$ odd\\[1.4em]
$\dyn{E}_6$ & $\Z_2$ & $w_0$\\
$\dyn{E}_7$ & $1$\\
$\dyn{E}_8$ & $1$\\
$\dyn{F}_4$ & $1$\\
$\dyn{G}_2$ & $1$ \\
\bottomrule
\end{tabular}
\caption{Dynkin diagram automorphisms}\label{table_D_diagram_auto}
\end{table}

An element of $W$ is called a \emph{Coxeter element} if it can be written as a product of all simple reflections $s_1,\dots,s_{r}$. Let $\Cox_W$ denote the set of all Coxeter elements in $W$. Using Theorem~\ref{theo:graph_determines_Xw}, we obtain the following result. 
\begin{proposition}\label{thm:dynkin1}
Let $W$ be the Weyl group of a simple Lie group $G$.
Let $w, w' \in \Cox_W$.
The following statements are equivalent:
\begin{enumerate}
\item $X_w \cong X_{w'}$ as varieties.\label{statement_1}
\item $\mathcal G_{w} \cong \mathcal G_{w'}$ as edge-labeled digraphs. \label{statement_2}
\end{enumerate}
If $G$ is of type $\dyn{A}$, $\dyn{D}_r$ \textup{(}$r$ odd\textup{)}, or $\dyn{E}_6$, then the above statements are equivalent to the following:
\begin{enumerate}
	 \setcounter{enumi}{2}
\item $w' = w$ or $w' = w_0 w w_0$. \label{statement_3}
\end{enumerate}
\end{proposition}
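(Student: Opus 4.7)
The equivalence \ref{statement_1}$\Leftrightarrow$\ref{statement_2} is immediate from Theorem~\ref{theo:graph_determines_Xw} applied with $G=G'$, so the only content to prove is the equivalence with \ref{statement_3} for the simply-laced types $\dyn{A}$, $\dyn{D}_r$ ($r$ odd), and $\dyn{E}_6$. I would first record the key feature these types share: Table~\ref{table_D_diagram_auto} tells us that $\Gamma=\{1,\theta\}\cong\Z_2$ and that the nontrivial element $\theta$ is realized by conjugation with $w_0$, so $w_0 s_i w_0 = s_{\theta(i)}$ for every $i\in[r]$.

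For \ref{statement_3}$\Rightarrow$\ref{statement_2} my plan is routine: the case $w'=w$ is trivial, and if $w'=w_0 w w_0$, then expanding a reduced decomposition $w=s_{i_1}\cdots s_{i_r}$ gives $w'=s_{\theta(i_1)}\cdots s_{\theta(i_r)}$, so $\G_{w'}$ is obtained from $\G_w$ simply by relabeling every vertex $i$ as $\theta(i)$; this is an isomorphism of edge-labeled digraphs because $\theta$ preserves Cartan integers.

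The interesting direction is \ref{statement_2}$\Rightarrow$\ref{statement_3}. Since $w$ and $w'$ are Coxeter elements, both $\G_w$ and $\G_{w'}$ have vertex set $[r]$ and underlying graph equal to the full Dynkin diagram; in a simply-laced type every edge carries label $1$, so any edge-labeled digraph isomorphism $f\colon\G_w\to\G_{w'}$ restricts to a Dynkin diagram automorphism and hence lies in $\{1,\theta\}$. The crux is the case $f=\mathrm{id}$, where I would claim that $\G_w=\G_{w'}$ as edge-labeled digraphs on $[r]$ forces $w=w'$; invoking the Word Property cited in Section~\ref{section_toric_Schubert_directed_graphs}, any two reduced decompositions of a given Coxeter element differ only by $2$-moves (braid moves are impossible since no simple reflection repeats), so the Coxeter element is determined by the relative order, in any reduced decomposition, of each pair of simple reflections that are adjacent in the Dynkin diagram---and this is precisely the data recorded by the edge directions of $\G_w$. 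The remaining case $f=\theta$ then reduces to the identity case: by the previous paragraph, $\G_{w_0 w w_0}$ equals $\G_{w'}$ as edge-labeled digraphs on $[r]$, so the identity-case conclusion applied to $w'$ and $w_0 w w_0$ yields $w'=w_0 w w_0$.

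The main obstacle is the identity-case assertion in the third paragraph: one must be certain that the edge-direction data of $\G_w$ is genuinely an invariant of $w$, i.e., is preserved under $2$-moves. This holds precisely because a $2$-move swaps only commuting pairs of simple reflections, and such pairs are non-adjacent in the Dynkin diagram and hence are not joined by any edge in $\G_w$.
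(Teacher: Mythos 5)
Your proposal is correct and matches the paper's (largely implicit) argument: the paper deduces the equivalence \eqref{statement_1}$\Leftrightarrow$\eqref{statement_2} directly from Theorem~\ref{theo:graph_determines_Xw}, and for the equivalence with \eqref{statement_3} it relies on exactly your observations---that for Coxeter elements the underlying graph of $\G_w$ is the full Dynkin diagram, that in types $\dyn{A}$, $\dyn{D}_r$ ($r$ odd), and $\dyn{E}_6$ its automorphism group is $\Z_2$ with the nontrivial element realized by conjugation by $w_0$ (Table~\ref{table_D_diagram_auto}), and that a Coxeter element is determined by the induced orientation of the diagram. The one point you assert rather than prove is the injectivity of $w \mapsto \G_w$ on $\Cox_W$: invariance under $2$-moves gives well-definedness but not injectivity, which needs the standard (one-line) lemma that two words using each letter exactly once and agreeing on the relative order of every non-commuting pair are connected by swaps of adjacent commuting letters, being linear extensions of a common poset.
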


\begin{remark}
The statement~\eqref{statement_3} implies the statement~\eqref{statement_1} in Proposition~\ref{thm:dynkin1} for any (not necessarily toric) Schubert varieties (see~\cite{richmond2021isomorphism}). 
%%%%%
Indeed, we consider the composition $\sigma$ of the two isomorphisms~$\tau,\rho$ of $\GL_n(\C)$ defined by
\[
\tau(A)=\! ^tA^{-1},\quad \rho(A)=w_0Aw_0.
\]
Since both $\tau$ and $\rho$ send the upper triangular Borel subgroup $B$ to the lower triangular Borel subgroup, their composition $\sigma$ preserves $B$ and hence induces a variety automorphism of $\GL_n(\C)/B$. Since $\tau$ fixes permutation matrices, we have $\sigma(BwB)=B\sigma(w)B=Bw_0ww_0B$, which implies $\sigma(X_w)=X_{w_0ww_0}$. 
%%%%%
\end{remark}

Recall that for a Coxeter element $w \in \Cox_W$, the underlying graph of $\G_w$ is the same as the Dynkin diagram of $W$. 
Accordingly, 
by Theorem~\ref{theo:graph_determines_Xw}, the number of isomorphism classes of toric Schubert varieties given by $\Cox_W$ is the same as that of orientations of Dynkin diagram of~$W$.

%%%%%
\begin{proposition}\label{cor_enumeration_isom_Coxeter_Schubert}
For each simple Lie type of $G$, the number of isomorphism classes of Fano or weak Fano toric Schubert varieties in $G/B$ given by Coxeter elements is displayed in Table~\ref{table_cardinality_iso_classes_Coxeter}. 
\begin{table}
\begin{tabular}{c|lll}
\toprule
Type & $\#\{ X_w \mid w \in \Cox_W \}/\sim$ & weak Fano & Fano\\
\midrule 
$\dyn{A}_r$ & $\displaystyle
\begin{cases}
2^{r-2} & \text{ when $r$ is even and $r\geq 2$}, \\
2^{r-2} + 2^{\frac{r-3}{2}} & \text{ when $r$ is odd and $r \geq 3$}
\end{cases}
$
& $\begin{cases}
2^{r-2} \\
2^{r-2} + 2^{\frac{r-3}{2}} 
\end{cases}$
&$\begin{cases}
\frac{r}{2} \\
\frac{r+1}{2}
\end{cases}$
\\[1.4em]
$\dyn{B}_r$ & $\displaystyle 
\begin{cases}
2^{r-1}&\text{ when $r=2 \text{ or }3$,}\\
7 \times 2^{r-4} & \text{ when $r$ is even and $r \geq 4$}, \\
7\times 2^{r-4} + 2^{\frac{r-5}{2}} & \text{ when $r$ is odd and $r \geq 5$}
\end{cases}$ 
&  $\begin{cases}
2^{r-1}\\
{7} \times 2^{r-4}   \\
{7}\times 2^{r-4} + 2^{\frac{r-5}{2}} 
\end{cases}$  & 
$1$ \\[2em]
$\dyn{C}_r$ & $\displaystyle 
\begin{cases}
4&\text{ when $r=3$},\\
7 \times 2^{r-4} & \text{ when $r$ is even and $r \geq 4$}, \\
7\times 2^{r-4} + 2^{\frac{r-5}{2}} & \text{ when $r$ is odd and $r \geq 5$}
\end{cases}$ &  $\displaystyle 
\begin{cases}
3\\
{5} \times 2^{r-4} &   \\
{5}\times 2^{r-4} + 2^{\frac{r-5}{2}} & 
\end{cases}$ & 
{$\begin{cases}
2\\
\frac{r}{2}&\\
\frac{r+1}{2} 
\end{cases}$} 
 \\[2em]
$\dyn{D}_r$ & $\displaystyle\begin{cases}
4 & \text{ if } r = 4, \\
3\cdot 2^{r-3}& \text{ otherwise}.
\end{cases}$
& $\begin{cases}
3 \\
5 \cdot 2^{r-4} 
\end{cases}$
& $\begin{cases}
2 \\
r-1
\end{cases}$
\\[1.2em]
$\dyn{E}_6$ & $20$
& $17$ & $4$
\\
$\dyn{E}_7$ & $2^6$ & $56$ & $7$\\
$\dyn{E}_8$ & $2^7$ & $112$ & $8$\\
$\dyn{F}_4$ & $2^3$ & $6$ & $2$\\
$\dyn{G}_2$ & $2$ & $1$ & $1$ \\
\bottomrule 
\end{tabular}
\caption{The cardinalities of the isomorphism classes of toric Schubert varieties given by Coxeter elements}\label{table_cardinality_iso_classes_Coxeter}
\end{table}
\end{proposition}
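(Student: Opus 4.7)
By Theorem~\ref{theo:graph_determines_Xw}, classifying toric Schubert varieties $X_w$ for Coxeter $w \in W$ up to isomorphism is equivalent to classifying the edge-labeled digraphs $\G_w$ up to isomorphism. Since $w$ is a Coxeter element, the underlying unlabeled graph of $\G_w$ is the Dynkin diagram of $G$ with any multiple edge replaced by a single edge, and $\G_w$ is an orientation of this tree together with labels determined by the Dynkin arrow convention: every single edge has label $1$, while the multi-edge (when present) has label $1$, $2$, or $3$ depending on whether its orientation disagrees or agrees with the Dynkin arrow. Any isomorphism $\G_w\cong\G_{w'}$ must come from an automorphism of the underlying tree that preserves both directions and labels. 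My plan is to identify this automorphism group in each Lie type, apply Burnside's lemma to count orbits, and combine with Theorem~\ref{thm_Fano_weak_Fano} to obtain the Fano and weak Fano counts.

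For the simply-laced types ($\dyn{A}_r$ with $r\geq 2$, $\dyn{D}_r$, $\dyn{E}_r$), all labels in $\G_w$ equal $1$ and the automorphism group of the underlying Dynkin diagram coincides with the Dynkin automorphism group $\Gamma$ of Table~\ref{table_D_diagram_auto}. Applying Burnside to $\Gamma$ acting on the $2^{r-1}$ orientations of the tree yields the claimed formulas: for the $\Z/2$ cases ($\dyn{A}_r$, $\dyn{D}_r$ with $r\geq 5$, $\dyn{E}_6$) I compute the number of reflection-fixed orientations directly; for $\dyn{D}_4$ I track fixed orientations under each element of $\mathfrak{S}_3$; for $\dyn{E}_7$, $\dyn{E}_8$ the group is trivial so the count is $2^{r-1}$. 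For the non-simply-laced types, $\Gamma$ is trivial but the underlying tree may still carry an extra $\Z/2$ reflection that identifies two $\G_w$'s only when the labels remain compatible. For $\dyn{F}_4$ this reflection swaps the central multi-edge with itself while reversing its direction (and hence changing its label), so no identifications occur and the count is $2^3$; $\dyn{G}_2$ is immediate. For $\dyn{B}_r$ and $\dyn{C}_r$ (multi-edge at one end of the path), I would split orientations into those whose multi-edge carries label $\ne 1$ (where the distinctive label pins the reflection and all $2^{r-2}$ orientations give distinct classes) and those with all labels equal to $1$ (where a Burnside analysis on this subset, further split by the parity of $r$ and by the direction of the edge at the opposite end of the path, yields the piecewise formulas in Table~\ref{table_cardinality_iso_classes_Coxeter}).

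For the Fano and weak Fano enumerations I invoke Theorem~\ref{thm_Fano_weak_Fano}, which translates the conditions into the label-weighted indegree at every vertex being at most $1$ (resp.~$2$). For type $\dyn{A}_r$, every orientation is weak Fano (all labels are $1$ and max degree in the Dynkin diagram is $2$), so the weak Fano count equals the overall count; the Fano orientations are exactly those avoiding an internal \emph{sink}, forming the $r$ sequences $L^a R^{r-1-a}$, and the path reflection gives $\lceil r/2\rceil$ or $\lfloor r/2\rfloor$ orbits according to the parity of $r$. For $\dyn{B}_r$ Fano, the label-$2$ multi-edge direction forces vertex $r$ to have indegree $2$, so the multi-edge must carry label $1$; cascading the indegree-$1$ constraint along the path then forces a unique orientation, yielding the count $1$. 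The remaining types are analogous combinatorial exercises within the same framework. I expect the main obstacle to be the parity-dependent Burnside calculations in the $\dyn{B}_r$ and $\dyn{C}_r$ cases, where the multi-edge's label determines whether the path reflection sends a valid $\G_w$ to another valid $\G_{w'}$; tracking this condition across both the overall Coxeter enumeration and the additional Fano/weak Fano constraints requires careful subcase analysis differing between even and odd $r$.
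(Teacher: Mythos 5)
Your proposal is correct and takes essentially the same route as the paper: reduce via Theorem~\ref{theo:graph_determines_Xw} to counting label-compatible orientations of the (collapsed) Dynkin diagram modulo its symmetries, split types $\dyn{B}_r$ and $\dyn{C}_r$ according to whether the multi-edge carries label $1$ or not (exactly the paper's case division, including the further split by the direction at the opposite end of the path), and apply Theorem~\ref{thm_Fano_weak_Fano} vertex-by-vertex for the Fano and weak Fano counts. Your Burnside framing is merely a repackaging of the paper's explicit ``(all orientations plus reflection-fixed orientations)/2'' bookkeeping, so the two arguments coincide in substance.
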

\begin{proof} We provide proof by analyzing directed Dynkin diagrams case-by-case. 
We will use the numbering of the vertices in Table~\ref{table_finite}.
We consider the simply laced cases first and then treat non-simply laced cases. 

\medskip  
\noindent \underline{\underline{Type $\dyn{A}$}}: For type $\dyn{A}$, since there are at most two edges at each vertex, any toric Schubert variety is weak Fano by Theorem~\ref{thm_Fano_weak_Fano}. 

We enumerate the number of isomorphism classes depending on the parity of the rank $r$. When $r$ is even, consider any orientation on the corresponding Dynkin diagram, which is a path graph having $r$ vertices. The involution on the Dynkin diagram does not preserve the orientation. 
Therefore, the involution on the Dynkin diagram provides an involution on the set $\{X_w \mid w \in \Cox_W\}$ of toric Schubert varieties, and this involution has no fixed element. 
Since there are $r-1$ edges, the number of isomorphism classes is $2^{r-1}/2 = 2^{r-2}$. 

When $r$ is odd, there exist orientations which are fixed by the involution on the Dynkin diagram.  
In fact, a directed path graph $\G$ on $[r]$ is fixed by the involution if and only if 
\[
(i,i+1) \in E(\G) \iff (r-i+1, r-i) \in E(\G) \quad \text{ for any } 1 \leq i \leq \frac{r+1}{2}-1. 
\] 
Therefore, the number of orientations fixed by the involution is $2^{\frac{r+1}{2}-1}$. 
This also provides that the number of orientations that are not fixed by the involution is $2^{r-1}-2^{\frac{r+1}{2}-1}$.
Accordingly, the number of isomorphism classes is 
\[
(2^{r-1}-2^{\frac{r+1}{2}-1})/2 + 2^{\frac{r+1}{2}-1}
= 2^{r-2} + 2^{\frac{r-1}{2}-1}.
\]

Now we enumerate the isomorphism classes of Fano toric Schubert varieties in $\{X_w \mid w \in \Cox_W\}$. By Theorem~\ref{thm_Fano_weak_Fano}, we consider orientations, where each vertex has at most one incoming edge. %having at most one outgoing edge. 
Since we are considering a path graph, any sink is a leaf of the graph. Consider the following possibilities. 
\begin{enumerate}
\item The sink set is $\{1,r\}$: there is a unique source and each of vertices $2,\dots,r-1$ can be a source. Considering the involution, there are $\frac{r+1}{2}-1$ orientations when $r$ is odd; $\frac{r}{2} -1$ orientations when $r$ is even. 
\item The sink set is $\{1\}$: the vertex $r$ should be a source, and there is a unique orientation having a source at $r$ and sink at $1$. Moreover, this is isomorphic to the orientation having a source at $1$ and a sink at $r$. 
\end{enumerate}
Therefore, we have  $\frac{r+1}{2}-1+1 = \frac{r+1}{2}$ orientations satisfying the Fano condition (up to isomorphisms) when $r$ is odd; $\frac{r}{2}-1+1 = \frac{r}{2}$ orientations satisfying the Fano condition (up to isomorphisms) when $r$ is even. This proves the claim.

\medskip  
\noindent \underline{\underline{Type $\dyn{D}$}}: For type $\dyn{D}_r$, when $r = 4$, we have the four directed Dynkin diagrams up to isomorphisms. See Table~\ref{table_directed_D4}.
\begin{table}
\begin{tabular}{ccccc}
$\G_w$ &
\begin{tikzpicture}[scale=0.7, baseline=-.5ex]
\tikzset{every node/.style={scale=0.7}}

\node[Dnode] (1) at (0,0) {};
\node[Dnode] (2) at (60:1) {};
\node[Dnode] (3) at (180:1) {};
\node[Dnode] (4) at (300:1) {};

\draw[<-] (1) to (2) ;
\draw[<-] (1) to (3) ;
\draw[<-] (1) to (4) ;
\end{tikzpicture} & 
\begin{tikzpicture}[scale=0.7, baseline=-.5ex]
\tikzset{every node/.style={scale=0.7}}

\node[Dnode] (1) at (0,0) {};
\node[Dnode] (2) at (60:1) {};
\node[Dnode] (3) at (180:1) {};
\node[Dnode] (4) at (300:1) {};

\draw[<-] (2) to (1) ;
\draw[<-] (1) to (3) ;
\draw[<-] (1) to (4) ;
\end{tikzpicture}  & 
\begin{tikzpicture}[scale=0.7, baseline=-.5ex]
\tikzset{every node/.style={scale=0.7}}

\node[Dnode] (1) at (0,0) {};
\node[Dnode] (2) at (60:1) {};
\node[Dnode] (3) at (180:1) {};
\node[Dnode] (4) at (300:1) {};

\draw[<-] (2) to (1) ;
\draw[<-] (3) to (1) ;
\draw[<-] (1) to (4) ;
\end{tikzpicture}  & 
\begin{tikzpicture}[scale=0.7, baseline=-.5ex]
\tikzset{every node/.style={scale=0.7}}

\node[Dnode] (1) at (0,0) {};
\node[Dnode] (2) at (60:1) {};
\node[Dnode] (3) at (180:1) {};
\node[Dnode] (4) at (300:1) {};

\draw[<-] (2) to (1) ;
\draw[<-] (3) to (1) ;
\draw[<-] (4) to (1) ;
\end{tikzpicture} \\[1.5em]
maximum of $\indeg(k)$ & $3$ & $2$ & $1$ & $0$
\end{tabular}
\caption{Directed Dynkin diagrams of type $\dyn{D}_4$}\label{table_directed_D4}
\end{table}
When $r > 4$, the Dynkin subgraph consisting of vertices $1,\dots,r-2$ provides $2^{r-3}$ choices, and the edge connecting nodes $r-2$ and $r-1$; and the edge connecting nodes $r-2$ and $r$ provides $3$ choices. Accordingly, we have $2^{r-3} \times 3$ directed Dynkin diagrams up to isomorphisms.

To enumerate Fano or weak Fano toric Schubert varieties, we consider directions on the Dynkin diagram of type $\dyn{D}_n$. By Theorem~\ref{thm_Fano_weak_Fano}, to enumerate Fano toric Schubert varieties, it is enough to consider orientations, where each vertex has at most one incoming edge. % having at most one outgoing edge.
Accordingly, any sink is a leaf of the graph and there should be at least two sinks. Otherwise, the trivalent vertex has at least two incoming edges. 
Consider the following possibilities.
\begin{enumerate}
\item The sink set is $\{1, r\}$: there is only one orientation satisfying the Fano condition. In this case, the vertex $r-1$ is the unique source. 
\item The sink set is $\{1, r-1\}$: there is only one orientation satisfying the Fano condition. In this case, the vertex $r$ is the unique source.
\item The sink set is $\{r-1, r\}$: there is only one orientation satisfying the Fano condition. In this case, the vertex $1$ is the unique source. 
\item The sink set is $\{1, r-1, r\}$: there are ${r-3}$ orientations satisfying the Fano condition. Indeed, each of the vertices $2,\dots,r-2$ can be a source. 
\end{enumerate}
Since the orientations in (1) and (2) provide isomorphic digraphs, we have $1 + 1 + r-3 = r-1$ orientations satisfying the Fano condition (up to isomorphisms). 

Now we enumerate weak Fano toric Schubert varieties. There is only one trivalent vertex in the graph and because of Theorem~\ref{thm_Fano_weak_Fano}, it is enough to consider the orientations such that the indegree of the trivalent vertex $r-2$ is at most $2$. 
Considering the orientations on the emanating edges of the vertex $r-2$, there are five cases up to isomorphisms. 
\begin{center}
\begin{tabular}{cccccc}
$\G_w$ &
\begin{tikzpicture}[scale=0.7, baseline=-.5ex]
\tikzset{every node/.style={scale=0.7}}

\node[Dnode] (1) at (0,0) {};
\node[Dnode] (2) at (60:1) {};
\node[Dnode] (3) at (-60:1) {};
\node[Dnode] (4) at (180:1) {};

\draw[dashed] (4) to (-2,0) ;
\draw[<-] (1) to (2) ;
\draw[<-] (1) to (3) ;
\draw[->] (1) to (4) ;
\end{tikzpicture} & 
\begin{tikzpicture}[scale=0.7, baseline=-.5ex]
\tikzset{every node/.style={scale=0.7}}

\node[Dnode] (1) at (0,0) {};
\node[Dnode] (2) at (60:1) {};
\node[Dnode] (3) at (-60:1) {};
\node[Dnode] (4) at (180:1) {};

\draw[dashed] (4) to (-2,0) ;
\draw[<-] (1) to (2) ;
\draw[->] (1) to (3) ;
\draw[<-] (1) to (4) ;
\end{tikzpicture}  & 
\begin{tikzpicture}[scale=0.7, baseline=-.5ex]
\tikzset{every node/.style={scale=0.7}}

\node[Dnode] (1) at (0,0) {};
\node[Dnode] (2) at (60:1) {};
\node[Dnode] (3) at (-60:1) {};
\node[Dnode] (4) at (180:1) {};

\draw[dashed] (4) to (-2,0) ;
\draw[->] (1) to (2) ;
\draw[->] (1) to (3) ;
\draw[<-] (1) to (4) ;
\end{tikzpicture} & 
\begin{tikzpicture}[scale=0.7, baseline=-.5ex]
\tikzset{every node/.style={scale=0.7}}

\node[Dnode] (1) at (0,0) {};
\node[Dnode] (2) at (60:1) {};
\node[Dnode] (3) at (-60:1) {};
\node[Dnode] (4) at (180:1) {};

\draw[dashed] (4) to (-2,0) ;
\draw[->] (1) to (2) ;
\draw[<-] (1) to (3) ;
\draw[->] (1) to (4) ;
\end{tikzpicture}&
\begin{tikzpicture}[scale=0.7, baseline=-.5ex]
\tikzset{every node/.style={scale=0.7}}

\node[Dnode] (1) at (0,0) {};
\node[Dnode] (2) at (60:1) {};
\node[Dnode] (3) at (-60:1) {};
\node[Dnode] (4) at (180:1) {};

\draw[dashed] (4) to (-2,0) ;
\draw[->] (1) to (2) ;
\draw[->] (1) to (3) ;
\draw[->] (1) to (4) ;
\end{tikzpicture}  \\[1.5em]
$\indeg(r-2)$ & $2$ & $2$ & $1$ & $1$ & $0$
\end{tabular}
\end{center}
Since there is no condition on the ${r-4}$ edges connecting vertices $1,\dots,r-3$, there are $5 \cdot 2^{r-4}$ orientations satisfying weak Fano condition. 

\medskip  

\noindent \underline{\underline{Type $\dyn{E}$}}: For type $\dyn{E}_6$, the longest element $w_0$ provides a Dynkin diagram involution. Since the Dynkin diagram of $\dyn{E}_6$ can be constructed by adding one more edge to the middle vertex in the Dynkin diagram of type $\dyn{A}_5$, the number of isomorphism classes is the same as twice of that of~$\dyn{A}_5$. Therefore, we obtain $2 \times 10 = 20$. 
For each type of $\dyn{E}_7$ and $\dyn{E}_8$, there does not exist a Dynkin diagram automorphism. Accordingly, there are $2^6$ (resp. $2^7$) isomorphism classes in type $\dyn{E}_7$ (resp.~$\dyn{E}_8$).

To enumerate (weak) Fano toric Schubert varieties in type $\dyn{E}_r$, we consider (weak) Fano in type $\dyn{A}_{r-1}$ first and then see how many choices of orientations there are for the remaining edge.  For instance, for type $\dyn{E}_6$, there are three Fano toric Schubert varieties in type $\dyn{A}_5$, where they are distinguished by the position of the source. Now we look at the orientation on the remaining edge of $\dyn{E}_6$.  If the source is the middle point, that is the vertex~$4$, then both orientations are fine but otherwise the orientation will be unique.  Therefore we obtain $2+1+1$ orientations satisfying Fano condition for Type $\dyn{E}_6$. A similar observation will work for $\dyn{E}_7$ and $\dyn{E}_8$. Indeed, for $r = 7,8$, an orientation satisfying the Fano condition in type $\dyn{A}_{r-1}$ has only one source. By considering the orientation on the remaining edge of $\dyn{E}_r$, if the source is the vertex~$4$, then both orientations are fine but otherwise, the orientation will be unique. Moreover, the Dynkin diagram automorphism on $\dyn{A}_{r-1}$ does not induce that on~$\dyn{E}_r$. Therefore, we have $r-2+2$ orientations satisfying the Fano condition for type $\dyn{E}_r$. Here, $r-2$ orientations are induced from that on~$\dyn{A}_{r-1}$ which do not have the source on the vertex $4$; and two orientations coming from that on $\dyn{A}_{r-1}$ which has the source on the vertex $4$.

Now we consider weak Fano toric Schubert varieties. 
For type $\dyn{E}_6$, we consider orientations in type $\dyn{A}_5$. There are $10$ different orientations in type $\dyn{A}_5$. We have the following three possibilities by looking at the middle vertex (which corresponds to the vertex $4$ in $\dyn{E}_6$. 
\begin{center}
\begin{tabular}{cccc}
$\G_w$ &
\begin{tikzpicture}[scale=0.7, baseline=-.5ex]
\tikzset{every node/.style={scale=0.7}}

\node[Dnode] (1) at (0,0) {};
\node[Dnode] (2) at (90:1) {};
\node[Dnode] (3) at (0:1) {};
\node[Dnode] (4) at (180:1) {};

\node[Dnode] (5) at (-2,0) {};
\node[Dnode] (6) at (2,0) {};

\draw[dashed] (4) to (5)
(3) to (6) 
(1) to (2);
%\draw[->] (1) to (2) ;
\draw[->] (1) to (3) ;
\draw[<-] (1) to (4) ;
\end{tikzpicture} & 
 \begin{tikzpicture}[scale=0.7, baseline=-.5ex]
\tikzset{every node/.style={scale=0.7}}

\node[Dnode] (1) at (0,0) {};
\node[Dnode] (2) at (90:1) {};
\node[Dnode] (3) at (0:1) {};
\node[Dnode] (4) at (180:1) {};

\node[Dnode] (5) at (-2,0) {};
\node[Dnode] (6) at (2,0) {};

\draw[dashed] (4) to (5)
(3) to (6) 
(1) to (2);

%\draw[<-] (1) to (2) ;
\draw[<-] (1) to (3) ;
\draw[<-] (1) to (4) ;
\end{tikzpicture} &
\begin{tikzpicture}[scale=0.7, baseline=-.5ex]
\tikzset{every node/.style={scale=0.7}}

\node[Dnode] (1) at (0,0) {};
\node[Dnode] (2) at (90:1) {};
\node[Dnode] (3) at (0:1) {};
\node[Dnode] (4) at (180:1) {};

\node[Dnode] (5) at (-2,0) {};
\node[Dnode] (6) at (2,0) {};

\draw[dashed] (4) to (5)
(3) to (6) ;

\draw[dashed] (4) to (-2,0) 
(1) to (2);
%\draw[<-] (1) to (2) ;
\draw[->] (1) to (3) ;
\draw[->] (1) to (4) ;
\end{tikzpicture}   \\[1.5em]
%$\outdeg(4)$ & $2$ & $2$ & \\
\begin{tabular}{l}
$\#$ of orientations \\
(up to isomorphisms)
\end{tabular} & $4$ & $3$ & $3$ 
\end{tabular}
\end{center}
If the middle vertex is a sink, then we have only one orientation to be weak Fano but otherwise, both orientations are fine. Accordingly, we have $4 \times 2 + 3 \times 2 + 3 = 17$ orientations satisfying the weak Fano condition on type $\dyn{E}_6$ (up to isomorphisms). 

Now we consider $\dyn{E}_r$ for $r = 7,8$. Considering three edges emanating from the vertex $4$, there are seven orientations such that the vertex $4$ has at most $2$ indegree. Since there are no other conditions on the remaining $r-4$ edges, we have $7\times 2^{r-4}$ orientations satisfying the weak Fano condition on type $\dyn{E}_r$ for $r = 7,8$. 

\bigskip 

To enumerate the number of Fano or weak Fano toric Schubert varieties, we use the following observation. Suppose that the Dynkin diagram has a double or triple edge of the following form:
\[
\begin{tikzpicture}[scale =.5, baseline=-.5ex]
\tikzset{every node/.style={scale=0.7}}

\node[Dnode, label=below:{$i$}] (2) {};
\node[Dnode, label=below:{$j$}] (3) [right=of 2] {};

\draw[double line] (2)--(3);

\end{tikzpicture} \quad 
\begin{tikzpicture}[scale =.5, baseline=-.5ex]
\tikzset{every node/.style={scale=0.7}}

\node[Dnode,  label=below:{$i$}] (2) {};
\node[Dnode, label=below:{$j$}] (3) [right=of 2] {};

\draw[triple line] (2)--(3);
\draw (2)--(3);

\end{tikzpicture}
\]
From the definition of the edge labeling on the graph $\G_{w}$ and the definition of the Cartan integers, $s_j$ appears ahead of $s_i$ in the reduced word of $w$ if and only if the corresponding edge in the graph $\G_w$ has the label $-c_{i,j}$. 

\medskip  
\noindent \underline{\underline{Type $\dyn{B}$}}: 
We have the label $2$ on the edge $(r-1,r)$ on the graph $\G_w$ if $s_{r}$ appears ahead of $s_{r-1}$. In this case, we have $\indeg(r) = 2$:
\[
\begin{tikzpicture}
\node (1) at (1,0) {$r-1$}; 
\node[right = of 1] (2) {$r$};
\draw[->] (1) to node[above, midway] {$2$} (2);
\end{tikzpicture}
\]
Otherwise, we have label $1$ on the edge $(r,r-1)$, $\indeg(r)=0$ and $\indeg(r-1)\geq 1$. When $r=2$, there are two toric Schubert varieties which are weak Fano and one of which is Fano. When $r=3$, there are four toric Schubert varieties which are weak Fano and one of which are Fano. Now we assume $r\geq 4$.

We first enumerate the isomorphism classes of toric Schubert varieties. 
\begin{enumerate}
\item If we have the edge $(r-1,r)$, then there are $2^{r-2}$ different digraphs up to isomorphisms. 
\item If we have the edge $(r,r-1)$, then the vertex $r$ should be a source. We consider the following two cases separately. 
\begin{enumerate}
\item If the first vertex (with label 1 in the Dynkin diagram) is a sink, then $2^{r-3}$ graphs are all different. 
\item If the first vertex (with label 1 in the Dynkin diagram) is a source, then the number of isomorphism classes of the directed graphs is the same as that of the directed graphs of Lie type $\dyn{A}_{r-2}$. 
\end{enumerate}
\end{enumerate}
By adding all possible cases, we obtain that if $r$ is even and $r \geq 4$, then the number of isomorphism classes is  $7 \times 2^{r-4}$; 
If $r$ is odd and $r \geq 5$, then the number of isomorphism classes is $7\times 2^{r-4} + 2^{\frac{r-3}{2} - 1}$.

Moreover, we note that all orientations provide weak Fano toric Schubert varieties. To enumerate the isomorphism classes of Fano toric Schubert varieties, we consider the directed Dynkin diagrams which do not have label $2$, that is, we have the edge $(r,r-1)$ in $\G_w$. Since the graph~$\G_w$ has no sink other than leaves and the vertex $r$ is a source, the vertex $1$ should be a sink, which is the only possible orientation to be Fano. Indeed, the Coxeter element $s_1s_2 \cdots s_r$ provides a Fano toric Schubert variety. 

\medskip  
\noindent \underline{\underline{Type $\dyn{C}$}}: 
We have the label $2$ on the edge $(r,r-1)$ if $s_{r-1}$ appears ahead of $s_r$. 
Otherwise, we have the edge $(r-1,r)$ with label $1$. Therefore, we obtain the same number of the isomorphism classes in a similar way to the case of type $\dyn{B}_r$.  When $r=3$, there are four toric Schubert varieties three of which are weak Fano and two of which are Fano. Now we assume $r\geq 4$.

If the graph $\G_w$ has the edge $(r,r-1)$ with label $2$ and the edge $(r-2,r-1)$, then $r-1$ is a sink with $\indeg(r-1) = 3$: 
\[
\begin{tikzpicture}
\node (1) at (1,0) {$r-2$};
\node[right = of 1] (2)  {$r-1$};
\node[right = of 2] (3) {$r$};
\draw[<-] (2) to (1);
\draw[<-] (2) to node[above, midway] {$2$}  (3) ;
\end{tikzpicture}
\] 
Therefore, any orientation that extends the above orientation does not provide weak Fano. The number of such orientations is $2^{r-3}$. 
Since all the other orientations provide weak Fano,  we obtain the number of isomorphism classes of weak Fano toric Schubert varieties as in Table~\ref{table_cardinality_iso_classes_Coxeter}.

Recall that $X_w$ is Fano if and only if $\indeg(k) \leq 1$ for any vertex $k \in [r]$. Therefore, for $X_w$ to be Fano, there is no edge with label $2$ in the graph $\G_w$ and no sink on the vertices $\{2,\dots,r-1\}$. Accordingly, $r$ should be a sink and there exists only one source. Enumerating such orientations, we obtain $\frac{r+1}{2}$ orientations when $r$ is odd; $\frac{r}{2}$ orientations when $r$ is even in a similar way to the case of type $\dyn{A}_r$.

\medskip 
\noindent \underline{\underline{Type $\dyn{F}_4$}}: There are eight directed Dynkin diagrams and we display the value $\max\{\indeg(k)\}$ below.
\begin{center}
\begin{tabular}{cccc}
\toprule 
$\G_w$ & \begin{tikzpicture}[baseline = -0.5ex]
\foreach \x in {1,...,4}{
\node(\x) at (\x,0) {$\x$};
}
\draw[<-] (1) to (2);
\draw[<-] (2) to (3);
\draw[<-] (3) to (4);
\end{tikzpicture}
& \begin{tikzpicture}[baseline = -0.5ex]
\foreach \x in {1,...,4}{
\node(\x) at (\x,0) {$\x$};
}
\draw[<-] (2) to (1);
\draw[<-] (2) to  (3);
\draw[<-] (3) to (4);
\end{tikzpicture}
& \begin{tikzpicture}[baseline = -0.5ex]
\foreach \x in {1,...,4}{
\node(\x) at (\x,0) {$\x$};
}
\draw[<-] (1) to (2);
\draw[<-] (2) to (3);
\draw[<-] (4) to (3);
\end{tikzpicture} \\[1.2em]
maximum of $\indeg(k)$ & 1 & 2 & 1  \\
\midrule 
$\G_w$ &  \begin{tikzpicture}[baseline = -0.5ex]
\foreach \x in {1,...,4}{
\node(\x) at (\x,0) {$\x$};
}
\draw[<-] (2) to (1);
\draw[<-] (2) to  (3);
\draw[<-] (4) to (3);
\end{tikzpicture} 
& \begin{tikzpicture}[baseline = -0.5ex]
\foreach \x in {1,...,4}{
\node(\x) at (\x,0) {$\x$};
}
\draw[<-] (1) to (2);
\draw[<-] (3) to node[above, midway] {$2$}  (2);
\draw[<-] (3) to (4);
\end{tikzpicture} 
& \begin{tikzpicture}[baseline = -0.5ex]
\foreach \x in {1,...,4}{
\node(\x) at (\x,0) {$\x$};
}
\draw[<-] (2) to (1);
\draw[<-] (3) to node[above, midway] {$2$}  (2);
\draw[<-] (3) to (4);
\end{tikzpicture}
\\[1.2em]
maximum of $\indeg(k)$ & 2 & 3 & 3  \\
\midrule 
$\G_w$ & \begin{tikzpicture}[baseline = -0.5ex]
\foreach \x in {1,...,4}{
\node(\x) at (\x,0) {$\x$};
}
\draw[<-] (1) to (2);
\draw[<-] (3) to node[above, midway] {$2$}  (2);
\draw[<-] (4) to (3);
\end{tikzpicture}
& \begin{tikzpicture}[baseline = -0.5ex]
\foreach \x in {1,...,4}{
\node(\x) at (\x,0) {$\x$};
}
\draw[<-] (2) to (1);
\draw[<-] (3) to node[above, midway] {$2$}  (2);
\draw[<-] (4) to (3);
\end{tikzpicture}
& \\[1.2em]
maximum of $\indeg(k)$ & 2 & 2 &   \\
\bottomrule 
\end{tabular}
\end{center}
Accordingly, there are $6$ weak Fano toric Schubert varieties, and $2$ Fano toric Schubert varieties (up to isomorphisms).

\medskip 
\noindent \underline{\underline{Type $\dyn{G}_2$}}: There are two directed Dynkin diagrams: 
\[
\begin{tikzpicture}[baseline = -0.5ex]
\foreach \x in {1,...,2}{
\node(\x) at (\x,0) {$\x$};
}
\draw[<-] (1) to node[above, midway] {$3$} (2);
\end{tikzpicture} \qquad 
\begin{tikzpicture}[baseline = -0.5ex]
\foreach \x in {1,...,2}{
\node(\x) at (\x,0) {$\x$};
}
\draw[<-] (2) to   (1);
\end{tikzpicture}
\]
One provides a Fano toric Schubert variety and the other is not weak Fano. This completes the proof.
\end{proof}

%%%%%%
\section{Cohomology ring distinguish toric Schubert varieties for simply laced types}\label{section_cohomology_ring_ADE}

In this section, we consider the family of all toric Schubert varieties $X_w$ satisfying that all the edges in $\G_w$ are labeled by $1$, which includes every toric Schubert variety in $G/B$ for a simple algebraic group $G$ of simply laced type. We show that for each toric Schubert variety $X_w$ in the family above, the edge-labeled digraph $\G_w$ is recovered (up to isomorphism) from the cohomology ring $H^\ast(X_w;\Z)$.

For the sake of simplicity, we define the following two sets.
\begin{align*}
    \eset{w}{j}&=\{k\mid (k,j)\in E(\mathcal{G}_w)\},\\
    \peset{w}{j}&=\{k\mid (j,k)\in E(\mathcal{G}_w)\}.
\end{align*}
That is, $\eset{w}{j}$ corresponds to the set of inward edges to $j$, and $\peset{w}{j}$ corresponds to the set of outward edges from ${j}$ in the digraph $\G_w$.

\begin{lemma}\label{lem:cohomology}
Let $W$ be the Weyl group of a Lie group $G$. To $w\in \Cox_{W}$, if all the edges in $\G_w$ are labeled by $1$, then
the cohomology ring of the toric Schubert variety $X_w$ is
\begin{equation*}
H^*(X_w)=\Z[x_{1},\dots,x_{r}]\left/\left(x_{j}^2-x_{j}\sum_{k\in \peset{w}{j}} x_{k}\,\middle|\, j=1,\dots,{r}\right)\right.,
\end{equation*}
where $r$ is the rank of $G$.
\end{lemma}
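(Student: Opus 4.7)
The plan is to apply the standard Danilov--Jurkiewicz presentation of the integral cohomology of a smooth complete toric variety. Recall that for such an $X_\Sigma$ one has
\[
H^*(X_\Sigma;\Z) = \Z[y_\rho : \rho \in \Sigma(1)] / (I_{SR} + J_{\rm lin}),
\]
where $I_{SR}$ is the Stanley--Reisner ideal encoding the primitive collections, and $J_{\rm lin}$ is generated by $\sum_\rho \langle \chi, u_\rho \rangle y_\rho$ for $\chi$ in the dual lattice $M$. I will apply this to the fan described in Theorem~\ref{thm_char_matrix_of_toric_Schubert}. Because $w\in\Cox_W$, we have $m=r$ and $(i_1,\ldots,i_r)$ is a permutation of $(1,\ldots,r)$; denote by $v_k$ and $w_k$ the classes dual to the rays $\mathbf v_{i_k}$ and $\mathbf w_{i_k}$, respectively. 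By~\eqref{equation_PC_of_Bott} the primitive collections are exactly $\{\mathbf v_{i_k},\mathbf w_{i_k}\}$ for $k=1,\ldots,r$, so $I_{SR}=(v_k w_k : k=1,\ldots,r)$.

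Next I would read off the linear relations directly from the matrix~\eqref{equation_ray_vectors}: pairing with the $p$-th standard dual basis vector $\bep_p^*\in M$ contributes $\delta_{pk}$ against $\mathbf v_{i_k}$, and contributes $-1$ if $p=k$, $-c_{i_p,i_k}$ if $p>k$, and $0$ if $p<k$ against $\mathbf w_{i_k}$. Summing gives
\[
v_p - w_p - \sum_{k<p} c_{i_p,i_k}\, w_k = 0,
\]
which lets us solve $v_p = w_p + \sum_{k<p} c_{i_p,i_k}\, w_k$ and eliminate all of the $v_k$'s. Substituting into the Stanley--Reisner relation $v_p w_p = 0$ and rearranging yields, for each $p$,
\[
w_p^2 = w_p \sum_{k<p} (-c_{i_p,i_k})\, w_k.
\]

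Now I would invoke the hypothesis that all edges of $\G_w$ carry label $1$. This forces $-c_{i_p,i_k}\in\{0,1\}$ for $k<p$, with value $1$ precisely when $(i_p,i_k)\in E(\G_w)$, i.e., when $i_k\in\peset{w}{i_p}$. Setting $x_{i_p}:=w_p$ and using that $(i_1,\ldots,i_r)$ is a permutation of $(1,\ldots,r)$, the relation becomes
\[
x_j^2 = x_j \sum_{\ell\in\peset{w}{j}} x_\ell \qquad (j=1,\ldots,r),
\]
which is the presentation asserted in the lemma.

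The routine part is verifying that the linear relations indeed eliminate the $v_k$'s exactly as written. The main bookkeeping obstacle is to cleanly track the two different indexings, namely the positional index $p$ along the reduced expression versus the Dynkin vertex label $i_p$, and to be careful with signs when extracting the Cartan entries from~\eqref{equation_ray_vectors}. The label-$1$ hypothesis enters precisely at the last step, where it ensures that all nonzero coefficients appearing in the quadratic relation are equal to $1$, so that the sum can be expressed as an unweighted sum over the outward edge set $\peset{w}{j}$.
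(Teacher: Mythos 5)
Your proposal is correct and follows essentially the same route as the paper: both apply the Jurkiewicz (Danilov--Jurkiewicz) presentation to the fan of Theorem~\ref{thm_char_matrix_of_toric_Schubert}, obtain the quadratic relations $y_j^2 + y_j\sum_{k<j} c_{i_j,i_k}y_k = 0$, and then relabel $y_j \mapsto x_{i_j}$ using the label-$1$ hypothesis to rewrite the sum over $\peset{w}{j}$. The only difference is that you explicitly derive these relations from the Stanley--Reisner and linear ideals (which the paper leaves to the citation of Jurkiewicz's theorem), and your bookkeeping of the two indexings and signs checks out.
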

\begin{proof}
Applying Theorem~\ref{thm_char_matrix_of_toric_Schubert} to Jurikewicz's theorem (see~\cite{Jurkiewicz}), we have
\begin{equation}\label{equation_Jurikewicz_thm}
H^*(X_w)=\Z[y_1,\dots,y_{r}]\left/\left(y_j^2 + y_j \left(\sum_{k<j}c_{i_j,i_k}y_k \right)\,\middle|\, j=1,\dots,{r}\right)\right.,
\end{equation}
where $c_{i_j, i_k}$ are Cartan integers. 
Since we assume that all the edges in $\G_w$ are labeled by $1$, $c_{i_j, i_k}$ is $0$ or $-1$, and $c_{i_j,i_k}=-1$ if and only if there is an edge between $i_j$ and $i_k$ in the Dynkin diagram of $G$ such that $\|\alpha_{i_k}\| > \|\alpha_{i_j}\|$.
Hence, by changing $y_j\mapsto x_{i_j}$ for each $j=1,\dots,{r}$, we get
\[
H^*(X_w)=\Z[x_{i_1},\dots,x_{i_{r}}]\left/\left(x_{i_j}^2-x_{i_j}\sum_{(i_j,i_k) \in E(\mathcal{G}_w)}x_{i_k}\,\middle|\, j=1,\dots,{r}\right)\right..
\] 
This proves the lemma.
\end{proof}
We demonstrate Lemma~\ref{lem:cohomology} in the following example. 
\begin{example}\label{example_comology_ring}
Suppose that $w= s_3s_1s_4s_5s_2 \in \Cox_{\dyn{A}_5}$. Then the digraph $\G_w$ is given as follows:
\begin{center}
\begin{tikzpicture}[node/.style={circle,draw, fill=white!20, inner sep = 0.25mm}, baseline = -0.5ex]
\foreach \x in {1,...,5}{
\node[node] (\x) at (\x,0) {$\x$};
}
\draw[<-] (1) to (2);
\draw[<-] (3) to (2);
\draw[<-] (3) to (4);
\draw[<-] (4) to (5);
\end{tikzpicture}
\end{center}
Using the Jurikewicz's theorem (see~\eqref{equation_Jurikewicz_thm}) and the computation in Example~\ref{example_char_matrix_s31452}, the cohomology ring $H^*(X_w)$ of the Schubert variety $X_w$ is the truncated polynomial ring $\Z[y_1,\dots,y_5]/\mathcal{I}$, where the ideal $\mathcal{I}$ is generated by 
\[
y_1^2, \quad y_2^2, \quad y_3^2 - y_3(y_1), \quad y_4^2 - y_4(y_3),\quad y_5^2 - y_5(y_1+y_2).
\]
By taking the change of bases $y_j \mapsto x_{i_j}$ for $j=1,\dots,5$, we obtain that the cohomology ring $H^*(X_w)$ is the truncated polynomial right $\Z[x_1,\dots,x_5]/\mathcal{I}'$, where the ideal $\mathcal I'$ is generated by 
\[
x_3^2, \quad x_1^2, \quad x_4^2 - x_4(x_3), \quad x_5^2 - x_5(x_4), \quad x_2^2 - x_2(x_3+x_1).
\] 
\end{example}
We notice that in Example~\ref{example_comology_ring}, $x_i^2 = 0$ in the cohomology ring if and only if $i = 1$ or $3$. On the other hand, the sinks of the digraph ${\G}_w$ are $1$ and $3$. We will see that this observation holds in general. 

An element $z\in H^2(X_w)$ is \emph{primitive} if $z$ cannot be divided by an integer greater than $1$ and is called  \emph{square zero} if $z^2=0$ in $H^\ast(X_w)$.
From the ring presentation in Lemma~\ref{lem:cohomology}, we see that $x_i$ is a square zero primitive element if the vertex $i$ is a sink of ${\G}_w$. There are more square zero primitive elements in $H^2(X_w)$.
For simplicity, we set 
\begin{equation} \label{eq:alpha}
\alpha_j:=\sum_{k\in \peset{w}{j}}x_k \quad\text{for $j=1,\dots,{r}$}.
\end{equation}
Hence if $j$ is a sink, then $\alpha_j=0$; otherwise it is the sum of at most three $x_k$'s.
\begin{lemma}\label{lemm:sve}
Assume that all the edges in $\G_w$ are labeled by $1$. A square zero primitive element in $H^2(X_w)$ is one of the following forms up to sign: 
\[
x_j\quad\text{ and } \quad 2x_{k}-x_j \text{ \textup{(}if  $\peset{w}{k}=\{j\}$\textup{)}},
\]
where $j$ is a sink of ${\G}_w$. 
\end{lemma}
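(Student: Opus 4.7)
The plan is to write $z = \sum_{j=1}^{r} a_j x_j$ with $a_j \in \Z$ and classify those $z$ with $z^2 = 0$ in $H^\ast(X_w;\Z)$. The point of departure is the ring presentation of Lemma~\ref{lem:cohomology}: since $X_w$ is a Bott manifold and the relations read $x_j^2 = x_j\alpha_j$ with $\alpha_j$ linear, iterated use of these relations rewrites any monomial as a $\Z$-linear combination of squarefree monomials, and these squarefree monomials form an additive $\Z$-basis of $H^\ast(X_w;\Z)$. Consequently $z^2 = 0$ if and only if the coefficient in $z^2$ of every squarefree degree-two monomial $x_j x_l$ (with $j \neq l$) vanishes.

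A direct expansion, together with $x_j^2 = \sum_{k \in \peset{w}{j}} x_j x_k$, yields the coefficient of $x_j x_l$ in $z^2$ as
\[
2 a_j a_l + a_j^2\,[(j,l) \in E(\G_w)] + a_l^2\,[(l,j) \in E(\G_w)],
\]
with $[P]$ denoting the Iverson bracket. The system obtained by equating each such coefficient to zero is then analyzed according to the support $S := \{j : a_j \neq 0\}$ of $z$.

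When $|S| = 1$, say $S = \{j\}$, the sole constraint is $x_j\alpha_j = 0$; since squarefree monomials are a basis this forces $\alpha_j = 0$, i.e., $j$ is a sink, and primitivity then gives $z = \pm x_j$. When $|S| = 2$, say $S = \{j, k\}$, the vanishing of the coefficient of $x_j x_l$ for $l \notin S$ forces $\peset{w}{j} \subseteq \{k\}$, and symmetrically $\peset{w}{k} \subseteq \{j\}$. Since at most one of $(j,k)$ and $(k,j)$ lies in $E(\G_w)$, we may swap $j$ and $k$ if necessary to assume $\peset{w}{k} = \{j\}$ and $\peset{w}{j} = \emptyset$; in particular, $j$ is a sink of $\G_w$. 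The coefficient of $x_j x_k$ then becomes $a_k(a_k + 2 a_j)$, so $a_k = -2 a_j$ and primitivity forces $z = \pm(2 x_k - x_j)$, matching the stated second form.

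Finally, the case $|S| \geq 3$ is excluded by a triangle-free argument. For any two distinct $j, l \in S$, if there is no edge between $j$ and $l$ in $\G_w$, the coefficient of $x_j x_l$ simplifies to $2 a_j a_l$, contradicting $j, l \in S$. Hence $S$ must induce a clique in the undirected graph underlying $\G_w$. But that underlying graph is an induced subgraph of the Dynkin diagram (with any multi-edge replaced by a single edge), which is a tree and therefore triangle-free. Thus $|S| \leq 2$, completing the classification. I expect the $|S| = 2$ case analysis to be the most delicate step; the triangle-free input and the Bott-manifold basis fact are standard.
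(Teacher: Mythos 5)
Your proposal is correct, but it takes a genuinely different route from the paper's. The paper proves the lemma in a few lines by invoking the general classification of square zero primitive elements in $H^2$ of a Bott manifold due to Choi--Masuda \cite[Corollary~2.1]{ch-ma12}: every such element is, up to sign, $x_k-\tfrac{1}{2}\alpha_k$ or $2x_k-\alpha_k$ for some $k$ with $\alpha_k^2=0$, after which it only remains to note that $\alpha_k$ is divisible by $2$ exactly when $\alpha_k=0$ (so $k$ is a sink, giving the first form), and that $\alpha_k\neq 0$ with $\alpha_k^2=0$ forces $\alpha_k=x_j$ for a sink $j$, i.e.\ $\peset{w}{k}=\{j\}$ (giving the second form). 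You instead argue from scratch: you expand $z^2$ in the squarefree-monomial $\Z$-basis, read off the coefficient of each $x_jx_l$, and bound the support by a clique argument using that the underlying graph of $\G_w$ is an induced subgraph of a Dynkin diagram, hence a forest and triangle-free. Your coefficient formula is complete, since a square $x_m^2$ with $m\notin\{j,l\}$ rewrites to monomials all containing $x_m$ and so cannot contribute to $x_jx_l$, and the acyclicity of $\G_w$ (edges run from later to earlier letters) guarantees the rewriting yields the basis; the case analysis is sound. The trade-off: the paper's route is shorter and rests on Bott-manifold theory that needs no hypothesis on the labels or the shape of the graph, while yours is self-contained and elementary but exploits triangle-freeness, which is available here yet absent in the general setting covered by the cited result. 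One presentational nit: in your $|S|=2$ case you assume an edge joins $j$ and $k$ before justifying it --- the justification (nonadjacent support vertices make the coefficient of $x_jx_k$ equal to $2a_ja_k\neq 0$) appears only in your $|S|\geq 3$ paragraph, so it would be cleaner to record once, before splitting into cases, that any two vertices in the support of $z$ must be adjacent in the underlying graph.
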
 
\begin{proof} We first notice that $x_j^2=0$ in the cohomology $H^*(X_w)$ if and only if $j$ is a sink of the digraph ${\G}_w$. 
By \cite[Corollary~2.1]{ch-ma12}, a square zero primitive element is of the following form up to sign:
\[
x_k-\frac{1}{2}\alpha_k \quad \text{or}\quad 2x_k-\alpha_k
\]
for some $k$ with $\alpha_k^2=0$, where the former case occurs when $\alpha_k$ is divisible by $2$ and the latter occurs otherwise. In our case, $\alpha_k$ is divided by $2$ only when $\alpha_k=0$. If $\alpha_k\neq 0$, then $\alpha_k^2=0$ only when $\alpha_{k}$ equals $x_j$ for a sink $j$ of $\G_w$, that is, $\peset{w}{k}=\{j\}$ for a sink $j$ of $\G_w$. This completes the lemma.
\end{proof}

Since the mod 2 reduction of the elements in Lemma~\ref{lemm:sve} is $x_j$, we can find all sources of ${\G}_w$ by looking at square zero primitive elements in $H^2(X_w)$. 

Following \cite[Section 6]{ch-ma-ou17}, we call an element $\alpha\in H^2(X_w;\Z_2)$ (possibly $\alpha=0$) an \emph{eigenelement} if there exists $x\in H^2(X_w;\Z_2)$ such that 
\[
x^2=\alpha x,\quad x\neq0,\quad\text{ and }\quad x\neq\alpha.
\]
Moreover, we call such $x$ an \emph{eigenvector} associated to $\alpha$. 
We define $E(\alpha)$ as the set of elements $x$ with $x^2=\alpha x$ ($x$ may be $0$ or $\alpha$) for an eigenelement $\alpha$, which is called the \emph{eigenspace} associated to $\alpha$. 
In fact, $E(\alpha)$ forms a vector space because we are working over $\Z_2$. We denote by $\bar{E}(\alpha)$ the quotient of $E(\alpha)$ by the one-dimensional subspace $\langle \alpha\rangle$ spanned by $\alpha$ and call $\bar{E}(\alpha)$ the \emph{reduced eigenspace} associated with $\alpha$. 
Note that eigenelements, eigenspaces, and reduced eigenspaces are preserved under a cohomology ring isomorphism. 
For an eigenelement $\alpha \in H^2(X_w;\Z_2)$, we define its \emph{multiplicity} to be the dimension of $\bar{E}(\alpha)$.

\begin{lemma}[Lemma 6.2 in \cite{ch-ma-ou17}] \label{lem:eigenelements}
The eigenelements are $\alpha_j$'s \textup{(}regarded as elements over $\Z_2$\textup{)} and the reduced eigenspace $\bar{E}(\alpha)$ is spanned by $x_j$'s for each eigenelement $\alpha$.
\end{lemma}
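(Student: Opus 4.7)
The plan is to work directly with the ring presentation in Lemma~\ref{lem:cohomology}, reduced modulo $2$, and analyze the defining equation $x^{2}=\alpha x$ of an eigenvector. First, each $\alpha_{j}$ is immediately an eigenelement: the relation $x_{j}^{2}=x_{j}\alpha_{j}$ says $x_{j}$ is an eigenvector for $\alpha_{j}$, and because $j\notin\peset{w}{j}$ we have $x_{j}\neq 0$ and $x_{j}\neq\alpha_{j}$, so $x_{j}$ represents a nonzero class in $\bar{E}(\alpha_{j})$.

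For the converse, write an arbitrary eigenvector over $\Z_{2}$ as $x=\sum_{j\in S}x_{j}$ and the corresponding eigenelement as $\alpha=\sum_{k}a_{k}x_{k}$ with $a_{k}\in\Z_{2}$. In characteristic $2$ the squaring map is additive on linear forms, so combined with $x_{j}^{2}=x_{j}\alpha_{j}$ the condition $x^{2}=\alpha x$ rewrites as $\sum_{j\in S}x_{j}(\alpha_{j}+\alpha)=0$ in $H^{4}(X_{w};\Z_{2})$. Since $\{x_{i}x_{k}:i<k\}$ is a $\Z_{2}$-basis of $H^{4}(X_{w};\Z_{2})$ and the only degree-$4$ relations are the $x_{j}^{2}+x_{j}\alpha_{j}$, this lifts to an identity
\[
\sum_{j\in S}x_{j}(\alpha_{j}+\alpha)=\sum_{j=1}^{r}\lambda_{j}\bigl(x_{j}^{2}+x_{j}\alpha_{j}\bigr)
\]
in $\Z_{2}[x_{1},\ldots,x_{r}]$. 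Matching the coefficient of $x_{j}^{2}$ forces $\lambda_{j}=a_{j}$ for $j\in S$ and $\lambda_{j}=0$ otherwise; setting $b_{ij}=1$ iff $(i,j)\in E(\G_{w})$ and matching the coefficients of $x_{j}x_{k}$ with $j<k$ then yields the pairwise constraints
\[
\begin{cases}
b_{jk}(1+a_{j})+b_{kj}(1+a_{k})+a_{j}+a_{k}=0 & (j,k\in S),\\
b_{jk}(1+a_{j})+a_{k}=0 & (j\in S,\ k\notin S),\\
b_{kj}(1+a_{k})+a_{j}=0 & (j\notin S,\ k\in S).
\end{cases}
\]

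The main work is a short case analysis of this linear system, using crucially that $b_{ij}$ and $b_{ji}$ cannot both equal $1$. If $\alpha$ has no support in $S$, then the second constraint specializes to $b_{jk}=a_{k}$ for $j\in S$, $k\notin S$, and the first forces no edges within $S$; consequently $\peset{w}{j}=\mathrm{supp}(\alpha)$ for every $j\in S$, so $\alpha_{j}=\alpha$. If instead some $j_{0}\in S$ has $a_{j_{0}}=1$, then plugging $k=j_{0}$ into the third constraint gives $a_{j}=0$ for every $j\notin S$, so $T\colonequals\mathrm{supp}(\alpha)\subseteq S$; splitting the first constraint according to whether each of $j,k$ lies in $T$ or in $S\setminus T$ then yields that $S\setminus T$ contains no internal edges, that every vertex of $S\setminus T$ has an edge to every vertex of $T$, and (via the second constraint) that no vertex of $S\setminus T$ sends an edge out of $S$, so again $\peset{w}{j}=T$ and $\alpha_{j}=\alpha$ for every $j\in S\setminus T$. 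Because the assumption $x\neq\alpha$ rules out $S=T$, in either case $\alpha=\alpha_{j}$ for some $j\in S$, and rewriting $x=\sum_{j\in S\setminus T}x_{j}+\alpha$ (or $x=\sum_{j\in S}x_{j}$ in the first case) exhibits $x$, modulo $\langle\alpha\rangle$, as a $\Z_{2}$-combination of $x_{j}$'s with $\alpha_{j}=\alpha$. Conversely each such $x_{j}$ automatically lies in $E(\alpha)$ since $x_{j}^{2}=\alpha_{j}x_{j}=\alpha x_{j}$, so $\bar{E}(\alpha)$ is spanned by $\{x_{j}\bmod\langle\alpha\rangle:\alpha_{j}=\alpha\}$, completing the lemma. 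The main obstacle is the bookkeeping in the second case, where one has to simultaneously pin down the edge structure on $S$ and the precise support of $\alpha$.
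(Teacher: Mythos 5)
Your proof is correct. Note that the paper does not actually prove this lemma --- it is quoted verbatim from \cite[Lemma~6.2]{ch-ma-ou17} --- so there is no internal proof to compare against; your argument is a valid self-contained reconstruction in the same spirit as the cited source (a support/case analysis of the equation $x^2=\alpha x$ in the face-ring presentation of $H^*(X_w;\Z_2)$). The delicate steps all check out: the lift of $\sum_{j\in S}x_j(\alpha_j+\alpha)=0$ to a polynomial identity is legitimate because the ideal is generated in polynomial degree two, so its degree-two graded piece is exactly the $\Z_2$-span of the generators $x_j^2+x_j\alpha_j$, and matching the $x_j^2$-coefficients then pins down the $\lambda_j$ as you say; the dichotomy $b_{jk}b_{kj}=0$ holds because between any two vertices of $\G_w$ the edge direction is determined by the positions in the reduced word; in Case~B the constraint $b_{kj}=1$ for all $k\in S\setminus T$, $j\in T$ together with the vanishing of edges inside $S\setminus T$ and out of $S$ gives $\peset{w}{k}=T$ exactly, hence $\alpha_k=\alpha$; and the exclusion of $S=T$ via $x\neq\alpha$ is precisely what is needed so that an eigenelement (which by definition admits an eigenvector $x\neq 0,\alpha$) is realized as some $\alpha_j$. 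For the spanning statement you also correctly use that $E(\alpha)$ is a $\Z_2$-subspace containing $\alpha$ and each $x_j$ with $\alpha_j=\alpha$ (since $x_j^2=\alpha_jx_j=\alpha x_j$), so both inclusions of $\bar{E}(\alpha)=\langle x_j : \alpha_j=\alpha\rangle$ hold. One cosmetic remark: the basis claim for $H^4(X_w;\Z_2)$ is true for Bott manifolds but is not actually needed --- membership in the ideal plus the degree count already yields the lift.
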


We see an example for nonzero eigenelements and their eigenspaces.

\begin{example}\label{example1}
    Let $G$ be of type $\dyn{D}$ and $w=s_8s_5 s_4 s_2s_1s_3 s_6s_7s_9$. Then we have: 
    \begin{center}
    $\G_w=$
        \begin{tikzpicture}
            [node/.style={circle,draw, fill=white!20, inner sep = 0.25mm}, baseline = -0.5ex]
            \node[node] (7) at (0,0) {$7$};
            \node[node] (8) at (60:1) {$8$};
            \node[node] (9) at (-60:1) {$9$};
            \node[node] (6) at (180:1) {$6$};
            \node[node] (5) at (-2,0) {$5$};
            \node[node] (4) at (-3,0) {$4$};
            \node[node] (3) at (-4,0) {$3$};
            \node[node] (2) at (-5,0) {$2$};
            \node[node] (1) at (-6,0) {$1$};
            \draw[<-] (5) to (6);
            \draw[<-] (6) to (7);
            \draw[<-] (8) to (7);
            \draw[<-] (7) to (9);
            \draw[<-] (4) to (3);
            \draw[<-] (2) to (3);
            \draw[<-] (2) to (1);
            \draw[<-] (5) to (4);
        \end{tikzpicture}
    \end{center}
    Then $\alpha_2=\alpha_5=\alpha_8=0$, $\alpha_1=x_2$, $\alpha_3=x_2+x_4$, $\alpha_4=x_5$, $\alpha_6=x_5$, $\alpha_7=x_6+x_8$, and $\alpha_9=x_7$.
    Hence for nonzero eigenelements of $H^2(X_w;\mathbb{Z}_2)$, their  eigenspaces are given as follows:
    \begin{center}
        \begin{tabular}{c||c|c}
            \toprule 
            $\alpha(\neq 0)$ & ${E}(\alpha)$  & $\dim\bar{E}(\alpha)$\\
            \midrule
            $x_2$ & $\langle x_1, x_2\rangle$  & $1$\\
            \hline
            $x_2+x_4$ & $\langle x_3,x_2+x_4\rangle$ & $1$\\
            \hline
            $x_5$ & $\langle x_4,x_5,x_6\rangle$  & $2$\\
            \hline
            $x_6+x_8$ & $\langle x_7,x_6+x_8\rangle$  & $1$\\
            \hline
            $x_7$ & $\langle x_7,x_9\rangle$  & $1$\\
            \bottomrule
        \end{tabular}
    \end{center}
\end{example}

\begin{lemma} \label{lemm:eigenspace}
Assume that all the edges in $\G_w$ are labeled by $1$.  Let $\alpha$ be a non-zero eigenelement of $H^2(X_w)$. Then the following statements hold: 
\begin{enumerate}
	\item $1\leq \dim \bar{E}(\alpha)\leq 3$;
	\item $\dim \bar{E}(\alpha)>1$ if and only if $\alpha=x_j$, $|\eset{w}{j}|>1$, and each $k\in \eset{w}{j}$ satisfies $\peset{w}{k}=\{j\}$.
\end{enumerate}
\end{lemma}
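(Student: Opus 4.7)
The plan is to reduce the problem to explicitly computing which of the generators $x_k$ lie in $E(\alpha)$, since by Lemma~\ref{lem:eigenelements} the reduced eigenspace is spanned by the images of such $x_k$. Fix a nonzero eigenelement $\alpha=\alpha_i$. Using the relation $x_k^2=x_k\alpha_k$ from Lemma~\ref{lem:cohomology}, the condition $x_k^2=\alpha\,x_k$ rewrites, over $\Z_2$, as $x_k(\alpha_k+\alpha_i)=0$ in $H^4(X_w;\Z_2)$. Since $\alpha_k+\alpha_i=\sum_{\ell\in\peset{w}{k}\triangle\peset{w}{i}}x_\ell$, I must decide when $x_k\sum_\ell x_\ell$ vanishes. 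For this I would exploit the fact that squarefree monomials form a $\Z_2$-basis of $H^*(X_w;\Z_2)$---a standard feature of Bott manifolds, whose Poincar\'e polynomial is $(1+t^2)^r$, so that $\dim H^{2s}(X_w;\Z_2)=\binom{r}{s}$ matches the number of squarefree monomials of degree $s$.

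The computation then splits on whether $k\in\peset{w}{i}$. If $k\notin\peset{w}{i}$, every $x_kx_\ell$ in the sum is a distinct basis element, so vanishing forces $\peset{w}{k}=\peset{w}{i}$. If $k\in\peset{w}{i}$, the diagonal term $x_k^2$ reduces via $x_k\alpha_k$, and a short symmetric-difference bookkeeping shows the residue becomes $\sum_{\ell\in\peset{w}{i}\setminus\{k\}}x_kx_\ell$, which vanishes iff $\peset{w}{i}=\{k\}$. I would then case-split on $|\peset{w}{i}|$: when $|\peset{w}{i}|\geq 2$, only the first subcase can apply, and $\peset{w}{k}=\peset{w}{i}$ for $k\neq i$ would exhibit two distinct common neighbors ($k$ and $i$) of each pair of vertices in $\peset{w}{i}$. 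Because every finite Dynkin diagram is a tree, two distinct vertices share at most one common neighbor, forcing $k=i$ and yielding $\dim\bar{E}(\alpha)=1$.

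When $|\peset{w}{i}|=1$, write $\peset{w}{i}=\{j\}$ so that $\alpha=x_j$: the first subcase gives the $k\neq j$ with $\peset{w}{k}=\{j\}$, and the second gives $k=j$ itself (which maps to $0$ in $\bar{E}(\alpha)$). Each such $k$ satisfies $(k,j)\in E(\G_w)$, hence $k\in\eset{w}{j}$, and since every vertex of a Dynkin diagram has degree at most $3$, we obtain $\dim\bar{E}(\alpha)\leq|\eset{w}{j}|\leq 3$. Combined with $\dim\bar{E}(\alpha)\geq 1$ (which follows immediately from the existence of an eigenvector distinct from $0$ and $\alpha$), this proves~(1). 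For~(2), the case analysis forces $\alpha=x_j$ whenever $\dim\bar{E}(\alpha)>1$ and supplies at least two $k\in\eset{w}{j}$ with $\peset{w}{k}=\{j\}$. The main obstacle is the final translation of this count-based characterization into the stated symmetric condition that \emph{every} $k\in\eset{w}{j}$ satisfies $\peset{w}{k}=\{j\}$; I expect this step to rely on the tree structure of the Dynkin diagram together with the ordering of vertices induced by the reduced decomposition of $w$ to argue that once two in-neighbors of $j$ have $\peset{w}{k}=\{j\}$, every in-neighbor of $j$ must satisfy the same property.
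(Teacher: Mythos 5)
Your computational core is correct and, modulo presentation, coincides with the paper's argument: the paper likewise reduces via Lemma~\ref{lem:eigenelements} to deciding which generators $x_k$ lie in $E(\alpha)$, asserts in its Case 1 that $\bar{E}(x_j)=\langle x_k\mid k\in \eset{w}{j} \text{ with } \peset{w}{k}=\{j\}\rangle$, and treats $|\peset{w}{i}|=2,3$ exactly as you do (your squarefree-monomial basis argument and the no-two-common-neighbors tree argument are details the paper leaves implicit, so on these points your write-up is actually more complete). In particular your computation establishes the precise formula $\dim\bar{E}(x_j)=\#\{k\in\eset{w}{j}\mid \peset{w}{k}=\{j\}\}$, which yields statement (1) and the ``if'' direction of (2).

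The genuine gap is the final step you flagged, and it cannot be closed, because the implication you hope for --- that once two in-neighbors of $j$ have out-set $\{j\}$, \emph{every} in-neighbor of $j$ does --- is false. Take $G$ of type $\dyn{D}_5$ and the Coxeter element $w=s_3s_1s_2s_4s_5$, so that $\G_w$ has directed edges $2\to 1$, $2\to 3$, $4\to 3$, $5\to 3$. The relations are $x_1^2=x_3^2=0$, $x_2^2=x_2(x_1+x_3)$, $x_4^2=x_4x_3$, $x_5^2=x_5x_3$, and a direct check against the squarefree basis gives $E(x_3)=\langle x_3,x_4,x_5\rangle$, hence $\dim\bar{E}(x_3)=2>1$; yet $2\in\eset{w}{3}$ has $\peset{w}{2}=\{1,3\}\neq\{3\}$, so the condition in statement (2) fails. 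Thus the ``only if'' direction of (2), as literally stated, is false whenever $j$ is the trivalent vertex with in-degree $3$ and exactly two in-neighbors $k$ satisfying $\peset{w}{k}=\{j\}$. Notably, the paper's own proof concedes exactly this configuration in its case (3)(b) (``$j$ is a trivalent vertex of $\G_w$ and there are two vertices $k\in\eset{w}{j}$ satisfying $\peset{w}{k}=\{j\}$''), so the proof in the paper establishes the count-based characterization rather than the stated one. The obstacle you ran into is therefore a defect of the statement, not of your computation: the correct form of (2) is the one your argument already proves, namely $\dim\bar{E}(\alpha)>1$ if and only if $\alpha=x_j$ and at least two $k\in\eset{w}{j}$ satisfy $\peset{w}{k}=\{j\}$, and you should record that version instead of attempting the translation in your last paragraph.
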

\begin{proof}
By Lemma~\ref{lem:eigenelements} and the definition of $\alpha_j$, any non-zero eigenelement $\alpha$ is the element $\alpha_j=\sum_{k\in \peset{w}{j}} x_k$, where $j$ is not a sink. We prove the lemma according to the cardinality of $\peset{w}{j}$.

\smallskip 

\noindent \underline{Case 1}: Suppose that $x_j$ is an eigenelement. Then there exists $k\in \eset{w}{j}$ such that $\peset{w}{k}=\{j\}$, and 
\[
\bar{E}(x_j)=\langle x_k\mid k\in \eset{w}{j}\text{ with }\peset{w}{k}=\{j\}\rangle.
\] 
Therefore, the following hold:
\begin{enumerate}
    \item $1\leq \dim\bar{E}(x_j)\leq 3$.
    \item $\dim\bar{E}(x_j)=3$ if and only if $j$ is a sink at the trivalent vertex and each $k\in \eset{w}{j}$ is a source only when $k$ is a leaf.
    \item $\dim\bar{E}(x_j)=2$ if and only if $j$ satisfies one of the following:
    \begin{enumerate}
        \item $j$ is a sink and each $k\in \eset{w}{j}$ is a source only when $k$ is a leaf.
        \item $j$ is a trivalent vertex of $\G_w$ and there are two vertices $k\in\eset{w}{j}$ satisfying $\peset{w}{k}=\{j\}$.
    \end{enumerate}
\end{enumerate}

\smallskip 

\noindent \underline{Case 2}: Suppose that $\alpha_j=x_{i}+x_{k}$, where $i,j,k$ are pairwise distinct. If $\eset{w}{j}=\emptyset$, then $j$ is a source; otherwise, $j$ is a trivalent vertex of $\G_w$. In any case, {$E(\alpha_j) = \langle x_j, x_i+x_k\rangle$ and} $\bar{E}(\alpha_j)=\langle x_j\rangle$.

\smallskip 
\noindent \underline{Case 3}: Now we assume that $\alpha_j=x_i+x_k+x_\ell$, where $i,j,k,\ell$ are pairwise distinct. Then $j$ is a trivalent vertex and it is a source. Furthermore, {$E(\alpha_j) = \langle x_j, x_i+x_k+x_{\ell} \rangle$ and } $\bar{E}(\alpha_j)=\langle x_j\rangle$.
\end{proof}

Since the rank of $\overline{E}(\alpha)$ is the multiplicity of the eigenelement $\alpha$, the multiplicity of the zero eigenelement equals the number of sources in the graph.

\begin{theorem} \label{theo:recover}
The edge-labeled digraph $\mathcal{G}_w$ can be recovered {\rm(}up to isomorphism{\rm)} from the cohomology ring $H^*(X_w;\Z)$ if 
all the labels in $\G_w$ are $1$. %\label{thm_1.3_2}
\end{theorem}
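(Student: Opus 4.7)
The plan is to reconstruct $\G_w$ from $H^\ast(X_w;\Z)$ by combining the square-zero primitive analysis of Lemma~\ref{lemm:sve} with the eigenspace analysis of Lemmas~\ref{lem:eigenelements} and~\ref{lemm:eigenspace}, working mod $2$ throughout so that sign ambiguities disappear and linear algebra over $\Z_2$ becomes available.

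First, I would pass to the canonically determined reduction $H^\ast(X_w;\Z_2)$. By Lemma~\ref{lemm:sve}, the mod $2$ image of every square-zero primitive element of $H^2(X_w;\Z)$ equals $x_j$ for some sink $j$ of $\G_w$, and every such $x_j$ arises in this way. Hence the collection of these mod $2$ images is exactly $\{x_j\mid j\text{ is a sink}\}\subset H^2(X_w;\Z_2)$, which pins down the sinks together with their generators.

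I would then recover the remaining vertices inductively by \emph{depth}, defined as the length of a longest directed path from the vertex to a sink. Suppose that at stage $d$ the classes $x_k$ for all vertices of depth $<d$ have been identified. By Lemma~\ref{lem:eigenelements}, every nonzero eigenelement of $H^2(X_w;\Z_2)$ has the form $\alpha_j=\sum_{k\in\peset{w}{j}}x_k$, and by Lemma~\ref{lemm:eigenspace} its reduced eigenspace $\bar E(\alpha)$ is spanned by the classes $x_j$ for vertices $j$ with $\alpha_j=\alpha$. For each eigenelement $\alpha$ whose support is contained in the already identified classes and meets depth $d-1$, the dimension of $\bar E(\alpha)$ counts the depth-$d$ vertices whose out-neighborhood equals $\mathrm{supp}(\alpha)$, and a basis of $\bar E(\alpha)$ supplies representative classes for these new vertices along with the edges leaving them.

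Iterating exhausts every vertex of $\G_w$, and the accumulated out-neighborhoods form the full directed edge set. Two vertices sharing the same $\alpha_j$ automatically share the same out-neighborhood, so swapping them is a digraph automorphism; thus the freedom in choosing bases of the $\bar E(\alpha)$'s exactly matches the isomorphism ambiguity permitted by the statement. The main obstacle I expect is the bookkeeping in the inductive step: one must verify that each relevant eigenelement genuinely has support in the already-identified classes, that $\bar E(\alpha)$ is not polluted by contributions from vertices of strictly greater depth that happen to satisfy $\alpha_{j'}=\alpha$, and that contributions from lower-depth vertices can be separated off cleanly. Lemma~\ref{lemm:eigenspace} is tailored precisely to these points in the all-labels-$1$ setting, so the heart of the argument lies in checking that it assembles into a consistent and well-defined inductive scheme.
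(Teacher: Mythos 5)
Your strategy shares the paper's starting ingredients (recover the sinks from the mod-$2$ reductions of square-zero primitive elements via Lemma~\ref{lemm:sve}, then grow the digraph using the eigenelements $\alpha_j$ and reduced eigenspaces of Lemmas~\ref{lem:eigenelements} and~\ref{lemm:eigenspace}), but your inductive step has a genuine well-definedness gap: you never say how to choose the representative classes ``$x_k$'' for newly found vertices, and later stages depend on that choice. The reduced eigenspace $\bar{E}(\alpha)$ only hands you cosets modulo $\langle\alpha\rangle$ (and, when $\dim\bar{E}(\alpha)>1$, an arbitrary basis of a span), not the distinguished generators. Concretely, take type $\dyn{A}_3$ and $w=s_1s_2s_3$, so that $\G_w$ is the path $3\to 2\to 1$ and $H^*(X_w;\Z_2)=\Z_2[x_1,x_2,x_3]/(x_1^2,\,x_2^2-x_1x_2,\,x_3^2-x_2x_3)$. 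Here $E(x_1)=\{0,x_1,x_2,x_1+x_2\}$, and both $x_2$ and $x_1+x_2$ are legitimate eigenvectors for the eigenelement $x_1$; nothing in the ring distinguishes them at stage $1$. If you pick $y=x_1+x_2$ as the class of the depth-one vertex, then at stage $2$ the eigenelement $\alpha_3=x_2=y+x_1$ has support $\{1,2\}$ with respect to your identified classes (it is contained in their span and meets depth $1$, so it passes your tests), and your procedure outputs the triangle with edges $2\to 1$, $3\to 2$, $3\to 1$ instead of the path --- a non-isomorphic, indeed impossible, digraph. So ``support with respect to the already-identified classes'' is not an invariant of the ring, and Lemma~\ref{lemm:eigenspace}, which only bounds $\dim\bar{E}(\alpha)$ and characterizes when it exceeds $1$, does not repair this; the ``consistent and well-defined inductive scheme'' you defer to is exactly the missing content of the proof.

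The paper circumvents the problem by never selecting representatives: it assigns to each vertex $j$ a whole subspace $V_j$ ($V_j=\langle x_j\rangle$ for a sink, $V_j=E(\alpha_j)$ otherwise) and reconstructs edges from basis-free data --- one-dimensional intersections $\dim(V_a\cap V_b)=1$ in Step~2, and in Step~3 the condition $\alpha_j\in V_p\oplus V_q$ with $V_p,V_q$ chosen \emph{as minimal as possible}, whose uniqueness is the precise substitute for your choice of supports (the paper's example $w=s_7s_8s_4s_5s_6s_2s_1s_3$ shows that four pairs $(V_p,V_q)$ can satisfy $\alpha_j\in V_p\oplus V_q$, so minimality is genuinely needed), with Steps~4 and~5 handling the trivalent vertex in types $\dyn{D}$ and $\dyn{E}$, which your sketch also leaves untreated. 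One small point in your favor: your worry about $\bar{E}(\alpha)$ being polluted by higher-depth vertices is vacuous, since $\alpha_j=\alpha_{j'}$ forces $\peset{w}{j}=\peset{w}{j'}$ (the $x_k$ being linearly independent) and hence equal depths. But without a canonical disambiguation rule of the paper's kind, your induction does not assemble into a well-defined reconstruction, so the proof is incomplete at its central step.
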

Recall that for a toric Schubert variety $X_w$ in $G/B$ for a simple Lie group of type $\dyn{A}$, $\dyn{D}$, or $\dyn{E}$, every edge in $\G_w$ is labeled by~$1$. Hence we get the following result.
\begin{corollary}\label{thm:dynkin2}
Let $W$ and $W'$ be the Weyl groups of simple Lie groups $G$ and $G'$ of type $\dyn{A}$, $\dyn{D}$, or $\dyn{E}$. 
Let $w$ and $w'$ are elements in $W$ and $W'$, respectively.
The following statements are equivalent:
\begin{enumerate}
\item $H^{\ast}(X_w;\Z) \cong H^{\ast}(X_{w'};\Z)$ as graded rings.
\item $\G_{w} \cong \G_{w'}$ as digraphs.
\end{enumerate}
\end{corollary}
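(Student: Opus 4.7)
My plan is to reconstruct $\G_w$ from the $\Z_2$-cohomology ring $H^*(X_w;\Z_2)=H^*(X_w;\Z)\otimes\Z_2$, which is canonically determined by $H^*(X_w;\Z)$. The key invariants are the set of eigenelements of $H^2(X_w;\Z_2)$ and their reduced eigenspaces, which by Lemma~\ref{lem:eigenelements} encode exactly the data $\{(j,\alpha_j):j=1,\dots,r\}$ up to grouping. Since every edge in $\G_w$ has label $1$, recovering $\G_w$ amounts to recovering the map $j\mapsto \peset{w}{j}$ up to relabeling of vertices.

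The reconstruction proceeds in three steps. First, set $r:=\dim_{\Z_2}H^2(X_w;\Z_2)$, which equals the number of vertices. Second, enumerate all eigenelements $\alpha\in H^2(X_w;\Z_2)$ and compute each $\bar E(\alpha)$; by Lemma~\ref{lem:eigenelements} these subspaces $\bar E(\alpha)=\langle x_j:\alpha_j=\alpha\rangle$ furnish a direct sum decomposition of $H^2(X_w;\Z_2)$. Choosing any basis of $H^2(X_w;\Z_2)$ adapted to this decomposition produces ``vertex classes'' $\{y_1,\dots,y_r\}$, each tagged with a distinguished eigenelement $\widetilde{\alpha}_j$. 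Third, expand each $\widetilde{\alpha}_j$ as a linear combination of the $y_k$'s to read off the out-neighborhood of vertex $j$, and build the digraph accordingly; sinks are recognized as those $j$ with $\widetilde{\alpha}_j=0$, in keeping with $\bar E(0)=\langle x_j:j\text{ is a sink}\rangle$.

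The main obstacle I expect is verifying that the reconstructed digraph is isomorphic to $\G_w$ regardless of the choice of adapted basis. The difficulty is twofold: within an eigenspace $\bar E(\alpha)$ of dimension at least $2$, the chosen basis $y_j$ may differ from the true $x_j$; and mod $2$, square-zero primitives such as $2x_k-x_j$ from Lemma~\ref{lemm:sve} become indistinguishable from the $x_j$'s. To resolve this, I will invoke Lemma~\ref{lemm:eigenspace}: eigenelements with $\dim\bar E(\alpha)>1$ are themselves of the form $x_j$ for distinguished vertices $j$, which rigidifies a large portion of the reconstruction. For the remaining one-dimensional eigenspaces, I will show that any two compatible basis choices produce digraphs related by a permutation of vertices within each eigenspace, so they are isomorphic as edge-labeled digraphs. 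Combining these identifications yields a digraph isomorphic to $\G_w$, completing the proof.
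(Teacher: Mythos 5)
Your overall strategy---recovering $\G_w$ from the eigenelement structure of $H^2(X_w;\Z_2)$---is the same as the paper's, but your execution has a genuine gap at the adapted-basis step. The eigenspaces $E(\alpha)$ and the set of eigenelements are canonical invariants of the ring, but the individual classes $x_k$ inside an eigenspace are not: any $x\in E(\alpha)$ with the same image in $\bar{E}(\alpha)$ is an equally valid basis choice, and the ambiguity is \emph{translation} by the eigenelement (more generally by other eigenvectors), not a permutation of vertices. Consequently, reading off out-neighborhoods by expanding the eigenelements $\widetilde{\alpha}_j$ in your basis $\{y_k\}$ is not well defined. Concretely, in Example~\ref{example1} one has $\alpha_4=\alpha_6=x_5$, $\alpha_5=0$, and $E(x_5)=\langle x_4,x_5,x_6\rangle$; the class $y_4:=x_4+x_5$ satisfies $(x_4+x_5)^2=x_4x_5=x_5(x_4+x_5)$, so $y_4\in E(x_5)$ and it has the same image as $x_4$ in $\bar{E}(x_5)$, making it a legitimate adapted choice. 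But then $\alpha_3=x_2+x_4=y_2+y_4+y_5$ (with $y_2=x_2$, $y_5=x_5$ forced), so your Step 3 assigns vertex $3$ out-edges to $2$, $4$, \emph{and} $5$, creating a cycle on $\{3,4,5\}$; the reconstructed digraph is not isomorphic to the tree $\G_w$. So your proposed resolution---that any two compatible basis choices yield digraphs related by a permutation of vertices within each eigenspace---is false as stated, and Lemma~\ref{lemm:eigenspace} does not rigidify the one-dimensional eigenspaces where the failure occurs.

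This is exactly the difficulty the paper's proof of Theorem~\ref{theo:recover} is engineered to avoid: it never chooses a basis, but instead works with the canonical subspaces $V_j$ (equal to $\langle x_j\rangle$ for sinks, which are pinned down by the \emph{integral} square-zero primitive classes of Lemma~\ref{lemm:sve}, and to $E(\alpha_j)$ otherwise), reconstructs edges from one-dimensional intersections $V_a\cap V_b$, and glues components by choosing $V_p,V_q$ (or $V_p,V_q,V_s$) \emph{as minimal as possible} with $\alpha_j\in V_p\oplus V_q$ (Steps 2--5, with separate handling of the trivalent vertex in types $\dyn{D}$ and $\dyn{E}$). That minimality condition is precisely what excludes spurious expansions such as $x_4=y_4+x_5$ above; the paper even includes an example ($w=s_7s_8s_4s_5s_6s_2s_1s_3$) illustrating why minimality is needed. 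To repair your argument you would have to impose an analogous normalization on the lifts---e.g., require every eigenelement to have support of minimal size in the chosen basis, and prove such a choice exists and is unique up to digraph automorphism---which in effect reproduces the paper's Steps 2--5. Finally, note that you only address $(1)\Rightarrow(2)$; the converse direction, though easy, should be stated: in types $\dyn{A}$, $\dyn{D}$, $\dyn{E}$ all labels equal $1$, so a digraph isomorphism is an edge-labeled isomorphism and Theorem~\ref{theo:graph_determines_Xw} gives $X_w\cong X_{w'}$, hence isomorphic cohomology rings.
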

\begin{remark}
	Richmond and Slofstra~\cite{richmond2021isomorphism} studied a relation between isomorphism classes of (not necessarily toric) Schubert varieties and their cohomology rings. Indeed, they proved that two Schubert varieties are isomorphic (as algebraic varieties) if and only if there is a graded cohomology ring isomorphism preserving the Schubert bases. Here, for a toric Schubert variety of dimension~$m$, the cohomology classes $\{x_{i_1} \cdots x_{i_k} \mid 1 \leq i_1 < \cdots < i_k \leq m\}$ form the Schubert bases in terms of the cohomology ring presentation in Lemma~\ref{lem:cohomology}. 
\end{remark}

\begin{proof}[Proof of Theorem~\ref{theo:recover}]
We notice that the direction $(2) \implies (1)$ directly comes from Theorem~\ref{theo:graph_determines_Xw}. Hence it is enough to consider the direction $(1) \implies (2)$. Note that a toric Schubert variety $X_w$ is a product of toric Schubert varieties arising from Coxeter elements and a digraph $\G_w$ consists of the digraphs coming from Coxeter elements. Therefore, it suffices to prove the direction $(1) \implies (2)$ for Coxeter elements $w,w'\in\Cox_W$.
	
We recover the graph ${\G}_{w}$ from the cohomology ring $H^\ast(X_w)$ for a Coxeter element $w\in\Cox_W$. In Step~1, we define the $\Z_2$-vector spaces $V_1,\dots,V_r$ using eigenelements. 
We draw a graph whose vertices are labelled by $V_1,\dots,V_r$ from Step~2 to Step~5. In Step~2, we find a connected component starting from a one-dimensional space $V_j$. In Step~3, we draw a graph by combining the connected components obtained from Step~2. If there is no trivalent vertex in $\G_w$, %For type $\dyn{A}$, 
we can recover the graph $\G_w$ at Step 3, but we may need Steps 4 and 5 for types $\dyn{D}$ and $\dyn{E}$. More precisely, we need to proceed with Step~4 if the trivalent vertex could not be recovered. We proceed with Step~5 if the graph obtained from Step~4 is disconnected.

\medskip
{\bf Step 1.} Note that there are exactly $r$ eigenelements considering multiplicities. For each vertex~$j$, we define a $\Z_2$-vector space $V_j$ by
\[
V_j=\begin{cases}
    \langle x_j\rangle &\text{ if }\alpha_j=0,\\
    E(\alpha_j)&\text{ otherwise.}
\end{cases}
\]
Note that $E(\alpha_{j})=E(\alpha_{j'})$ for different $j$ and $j'$ if the multiplicity of a nonzero eigenelement $\alpha_j$ is greater than $1$.

\medskip
{\bf Step 2.}
For each $j$ with $\alpha_j=0$, find all the spaces $V_a$'s satisfying $\dim (V_j\cap V_a) = 1$ and then draw a directed edge from $V_a$ to $V_j$. 
For a source $V_a$, we find all the spaces $V_b$'s satisfying $\dim (V_a\cap V_b) = 1$ and then draw a directed edge from $V_b$ to $V_a$. We repeat this process until there is no space $V_k$ satisfying that the dimension of the intersection of $V_k$ with a source of some connected component containing $V_j$ with $\alpha_j=0$ is one-dimensional. Then the resulting connected component is a path or a tree having one trivalent vertex.

If $\G_w$ has no trivalent vertex, then there are three possible forms by Lemma~\ref{lemm:eigenspace}.
\begin{equation} \label{eq:space-path1}
	\begin{tikzcd}[row sep = 0.5em ]
		\langle x_j\rangle  
		& E(\alpha_{j+1}) \lar \arrow[d, equal] 
		& \cdots \lar 
		& E(\alpha_{j+p}) \lar \arrow[d, equal]\\
		& \langle x_j, x_{j+1}\rangle 
		&& \langle x_{j+p-1},x_{j+p}\rangle 
	\end{tikzcd}
\end{equation}

\begin{equation} \label{eq:space-path2}
	\begin{tikzcd}[row sep = 0.5em] 
E(\alpha_{j-q}) \arrow[d, equal] \rar &  \cdots \rar 
& \rar E(\alpha_{j-1}) \arrow[d, equal]
&  \langle x_j\rangle \\
\langle x_{j-q},x_{j-q+1}\rangle 
&& \langle x_{j-1}, x_{j}\rangle
\end{tikzcd}
\end{equation}

\begin{equation} \label{eq:space-path3} 
E( \alpha_{j-q}) \rightarrow  E(\alpha_{j-q+1})\rightarrow \dots\rightarrow E(\alpha_{j-1})\rightarrow \langle x_j\rangle\leftarrow E(\alpha_{j+1})\leftarrow \dots\leftarrow E(\alpha_{j+p-1})\leftarrow  E(\alpha_{j+p}),
\end{equation}
where 
\begin{align*}
&E(\alpha_{j-1})=E(\alpha_{j+1})={E}(x_j)=\langle x_{j-1},x_j,x_{j+1}\rangle, \label{eq:both}\\
&{E}(\alpha_{j+t-1})=\langle x_{j+t-1},x_{j+t}\rangle\quad\text{ for $2\le t\le p$},\\%\label{eq:left}\\
&{E}(\alpha_{j-s+1})=\langle x_{j-s},x_{j-s+1} \rangle\quad\text{ for $2\le s\le q$}.%\label{eq:right}
\end{align*}
(Note that the multiplicity of the eigenelement $x_j$ is two and the last two identities above make sense when $p\ge 2$, $q\ge 2$.)

In types $\dyn{D}$ and $\dyn{E}$, we have four more forms {in each case}. For type $\dyn{D}$, we have the following four forms.

\begin{equation}\label{eq:space-tri1}
\begin{tikzpicture}[baseline=(current  bounding  box.east),
scale=.4,
terminal/.style={
    % The shape:
    ellipse,
    % The size:
    minimum width=.8cm,
    minimum height=.5cm,
    % Font
    font=\itshape,
},
]
\matrix[row sep=.1cm,column sep=.4cm] {%
    % First row:
     & &  & &   & & & \node [terminal](p1) {$E(\alpha_{r-1})$};\\
    %Second row
     \node [terminal](p6) {$E(\alpha_{r-q})$};& & \node [terminal](p5) {$\cdots$}; & &\node [terminal](p4) {$E(\alpha_{r-3})$}; &&\node [terminal](p3) {$\langle x_{r-2}\rangle$};&  \\
    % Third row:
     & &  & & & & &\node [terminal](p2) {$E(\alpha_r)$};\\
};
\draw   (p1) edge [->,shorten <=2pt, shorten >=2pt] (p3)
        (p2) edge [->,shorten <=2pt, shorten >=2pt] (p3);
\draw   (p3) edge [<-, shorten <=2pt, shorten >=2pt] (p4)
        (p4) edge [<-, shorten <=2pt, shorten >=2pt] (p5)
        (p5) edge [<-, shorten <=2pt, shorten >=2pt] (p6); 
\end{tikzpicture}
\end{equation}
where the multiplicity of the eigenelement $x_{r-2}$ is three, $q\geq 3$, and  $E(\alpha_{r-3})=E(\alpha_{r-1})=E(\alpha_r)=E(x_{r-2})=\langle x_{r-3},x_{r-2},x_{r-1},x_r\rangle$.
\begin{equation}\label{eq:space-tri2}
\begin{tikzpicture}[baseline=(current  bounding  box.east),
scale=.4,
terminal/.style={
    % The shape:
    ellipse,
    % The size:
    minimum width=.8cm,
    minimum height=.5cm,
    % Font
    font=\itshape,
},
]
\matrix[row sep=.1cm,column sep=.4cm] {%
    % First row:
     & &  & &   & & & \node [terminal](p1) {$E(\alpha_{r-1})$};\\
    %Second row
     \node [terminal](p6) {$E(\alpha_{r-q})$};& & \node [terminal](p5) {$\cdots$}; & &\node [terminal](p4) {$E(\alpha_{r-3})$}; &&\node [terminal](p3) {$E(\alpha_{r-2})$};&  \\
    % Third row:
     & &  & & & & &\node [terminal](p2) {$\langle x_r\rangle$};\\
};
\draw   (p1) edge [->, shorten <=2pt, shorten >=2pt] (p3)
        (p2) edge [<-, shorten <=2pt, shorten >=2pt] (p3);
\draw   (p3) edge [<-, shorten <=2pt, shorten >=2pt] (p4)
        (p4) edge [<-, shorten <=2pt, shorten >=2pt] (p5)
        (p5) edge [<-, shorten <=2pt, shorten >=2pt] (p6); 
\end{tikzpicture}
\end{equation}
where $q\geq 3$, $E(\alpha_{r-2})=E(x_r)=\langle x_{r-2},x_r\rangle$ and $E(\alpha_{r-3})=E(\alpha_{r-1})=E(x_{r-2})=\langle x_{r-3},x_{r-2},x_{r-1}\rangle$.

\begin{equation}\label{eq:space-tri3}
\begin{tikzpicture}[baseline=(current  bounding  box.east),
scale=.4,
terminal/.style={
    % The shape:
    ellipse,
    % The size:
    minimum width=.8cm,
    minimum height=.5cm,
    % Font
    font=\itshape,
},
]
\matrix[row sep=.1cm,column sep=.4cm] {%
    % First row:
     & &  & &   & & & \node [terminal](p1) {$\langle x_{r-1}\rangle$};\\
    %Second row
     \node [terminal](p6) {$E(\alpha_{r-q})$};& & \node [terminal](p5) {$\cdots$}; & &\node [terminal](p4) {$E(\alpha_{r-3})$}; &&\node [terminal](p3) {$E(\alpha_{r-2})$};&  \\
    % Third row:
     & &  & & & & &\node [terminal](p2) {$E(\alpha_r)$};\\
};
\draw   (p1) edge [<-, shorten <=2pt, shorten >=2pt] (p3)
        (p2) edge [->, shorten <=2pt, shorten >=2pt] (p3);
\draw   (p3) edge [<-, shorten <=2pt, shorten >=2pt] (p4)
        (p4) edge [<-, shorten <=2pt, shorten >=2pt] (p5)
        (p5) edge [<-, shorten <=2pt, shorten >=2pt] (p6); 
\end{tikzpicture}
\end{equation}
where $q\geq 3$, $E(\alpha_{r-2})=E(x_{r-1})=\langle x_{r-2},x_{r-1}\rangle$ and $E(\alpha_{r-3})=E(\alpha_{r})=E(x_{r-2})=\langle x_{r-3},x_{r-2},x_{r}\rangle$.
\begin{equation}\label{eq:space-tri4}
\begin{tikzpicture}[baseline=(current  bounding  box.east),
scale=.4,
terminal/.style={
    % The shape:
    ellipse,
    % The size:
    minimum width=.8cm,
    minimum height=.5cm,
    % Font
    font=\itshape,
},
]
\matrix[row sep=.1cm,column sep=.4cm] {%
    % First row:
      & &   & & & \node [terminal](p1) {$E(\alpha_{r-1})$};\\
    %Second row
      \node [terminal](p5) {$\cdots$}; & &\node [terminal](p4) {$A$}; &&\node [terminal](p3) {$E(\alpha_{r-2})$};&  \\
    % Third row:
      & & & & &\node [terminal](p2) {$E(\alpha_r)$};\\
};
\draw   (p1) edge [->, shorten <=2pt, shorten >=2pt] (p3)
        (p2) edge [->, shorten <=2pt, shorten >=2pt] (p3);
\draw   (p3) edge [->, shorten <=2pt, shorten >=2pt] (p4); 
\end{tikzpicture}
\end{equation}
where $A$ is $\langle x_{r-3}\rangle$ or $E(\alpha_{r-3})$, $E(\alpha_{r-2})=E(x_{r-3})=\langle x_{r-3},x_{r-2}\rangle$, and $E(\alpha_{r-1})=E(\alpha_{r})=E(x_{r-2})=\langle x_{r-2},x_{r-1},x_r\rangle$. Note that $A\cap E(\alpha_{r-2})=\langle x_{r-3}\rangle$ in any case.

In type $\dyn{E}$, we have four forms containing a trivalent vertex similarly to type $\dyn{D}$.

\begin{equation}\label{eq:space-e4}
\begin{tikzpicture}[baseline=(current  bounding  box.east),
scale=.4,
terminal/.style={
    % The shape:
    ellipse,
    % The size:
    minimum width=.8cm,
    minimum height=.5cm,
    % Font
    font=\itshape,
},
]
\matrix[row sep=.5cm,column sep=.4cm] {%
    % First row:
     &&& & \node[terminal](p2) {$E(\alpha_2)$}; & &   & & & &&\\
    %Second row
     \node [terminal](p1) {$E(\alpha_1)$};&&\node [terminal](p3) {$E(\alpha_3)$};& & \node [terminal](p4) {$\langle x_4\rangle$}; & &\node [terminal](p5) {$E(\alpha_5)$}; &&\node [terminal](pd) {$\cdots$};&  & \node [terminal](p6) {$E(\alpha_q)$};&\\
};
\draw   (p2) edge [->, shorten <=2pt, shorten >=2pt] (p4)
        (p1) edge [->, shorten <=2pt, shorten >=2pt] (p3)
        (p3) edge [->, shorten <=2pt, shorten >=2pt] (p4);
\draw   (p4) edge [<-, shorten <=2pt, shorten >=2pt] (p5)
        (p5) edge [<-, shorten <=2pt, shorten >=2pt] (pd)
        (pd) edge [<-, shorten <=2pt, shorten >=2pt] (p6);  
\end{tikzpicture}
\end{equation}
where $q\geq 5$.
\begin{equation}\label{eq:space-e5}
\begin{tikzpicture}[baseline=(current  bounding  box.east),
scale=.4,
terminal/.style={
    % The shape:
    ellipse,
    % The size:
    minimum width=.8cm,
    minimum height=.5cm,
    % Font
    font=\itshape,
},
]
\matrix[row sep=.5cm,column sep=.4cm] {%
    % First row:
     &&& & \node[terminal](p2) {$\langle x_2\rangle$}; & &   & & & &&\\
    %Second row
     \node [terminal](p1) {$E(\alpha_1)$};&&\node [terminal](p3) {$E(\alpha_3)$};& & \node [terminal](p4) {$E(\alpha_4)$}; & &\node [terminal](p5) {$E(\alpha_5)$}; &&\node [terminal](pd) {$\cdots$};&  & \node [terminal](p6) {$E(\alpha_q)$};&\\
};
\draw   (p2) edge [<-, shorten <=2pt, shorten >=2pt] (p4)
        (p1) edge [->, shorten <=2pt, shorten >=2pt] (p3)
        (p3) edge [->, shorten <=2pt, shorten >=2pt] (p4);
\draw   (p4) edge [<-, shorten <=2pt, shorten >=2pt] (p5)
        (p5) edge [<-, shorten <=2pt, shorten >=2pt] (pd)
        (pd) edge [<-, shorten <=2pt, shorten >=2pt] (p6);  
\end{tikzpicture}
\end{equation}
where $q\geq 5$.
\begin{equation}\label{eq:space-e6}
\begin{tikzpicture}[baseline=(current  bounding  box.east),
scale=.4,
terminal/.style={
    % The shape:
    ellipse,
    % The size:
    minimum width=.8cm,
    minimum height=.5cm,
    % Font
    font=\itshape,
},
]
\matrix[row sep=.5cm,column sep=.4cm] {%
    % First row:
     &&& & \node[terminal](p2) {$E(\alpha_2)$}; & &   & & & &&\\
    %Second row
     \node [terminal](p1) {$E(\alpha_1)$};&&\node [terminal](p3) {$\langle x_3\rangle$};& & \node [terminal](p4) {$E(\alpha_4)$}; & &\node [terminal](p5) {$E(\alpha_5)$}; &&\node [terminal](pd) {$\cdots$};&  & \node [terminal](p6) {$E(\alpha_q)$};&\\
};
\draw   (p2) edge [->, shorten <=2pt, shorten >=2pt] (p4)
        (p1) edge [->, shorten <=2pt, shorten >=2pt] (p3)
        (p3) edge [<-, shorten <=2pt, shorten >=2pt] (p4);
\draw   (p4) edge [<-, shorten <=2pt, shorten >=2pt] (p5)
        (p5) edge [<-, shorten <=2pt, shorten >=2pt] (pd)
        (pd) edge [<-, shorten <=2pt, shorten >=2pt] (p6);  
\end{tikzpicture}
\end{equation}
where $q\geq 5$.
\begin{equation}\label{eq:space-e7}
\begin{tikzpicture}[baseline=(current  bounding  box.east),
scale=.4,
terminal/.style={
    % The shape:
    ellipse,
    % The size:
    minimum width=.8cm,
    minimum height=.5cm,
    % Font
    font=\itshape,
},
]
\matrix[row sep=.5cm,column sep=.4cm] {%
    % First row:
     &&& & \node[terminal](p2) {$E(\alpha_2)$}; & &   & & & &&\\
    %Second row
     \node [terminal](p1) {$E(\alpha_p)$};&&\node [terminal](p3) {$\cdots$};& & \node [terminal](p4) {$E(\alpha_4)$}; & &\node [terminal](p5) {$\langle x_5\rangle$}; &&\node [terminal](pd) {$\cdots$};&  & \node [terminal](p6) {$E(\alpha_q)$};&\\
};
\draw   (p2) edge [->, shorten <=2pt, shorten >=2pt] (p4)
        (p1) edge [->, shorten <=2pt, shorten >=2pt] (p3)
        (p3) edge [->, shorten <=2pt, shorten >=2pt] (p4);
\draw   (p4) edge [->, shorten <=2pt, shorten >=2pt] (p5)
        (p5) edge [<-, shorten <=2pt, shorten >=2pt] (pd)
        (pd) edge [<-, shorten <=2pt, shorten >=2pt] (p6);  
\end{tikzpicture}
\end{equation}
where $q\geq 6$ and $p$ is $1$ or $3$. If the graph obtained from this step is a connected graph on the vertices $V_1,\dots,V_r$, then it is the desired graph. Otherwise, we need to proceed to Step~3.

\medskip

{\bf Step 3.} Let $\mathcal{R}$ be the set of subspaces $V_j$ not considered in Step 2, and let $\mathcal{L}$ be the set of leaves of connected components obtained from Step 2 as an undirected graph.

For each $V_j=E(\alpha_j)\in \mathcal{R}$, there are three possibilities.
\begin{enumerate}
    \item[(1)] There exist $V_p,V_q\in \mathcal{L}$ such that $\alpha_j\in V_p\oplus V_q$.
    \item[(2)] There exist $V_p,V_q,V_s\in \mathcal{L}$ such that $\alpha_j\in V_p\oplus V_q\oplus V_s$.
    \item[(3)] $V_j$ does not satisfy either of the above two.
\end{enumerate}
Let $\mathcal{R}_1$ (resp. $\mathcal{R}_2$ and $\mathcal{R}_3$) be the set of spaces $V_j$ satisfying (1) (resp. (2) and (3)) in the above. Then $\mathcal{R}$ is the disjoint union of $\mathcal{R}_1$, $\mathcal{R}_2$, and $\mathcal{R}_3$.

Now we use the spaces in $\mathcal{R}_1\cup \mathcal{R}_2$ to combine connected components obtained from Step 2. 

\begin{enumerate}[(i)]
\item  For $V_j=E(\alpha_j)\in \mathcal{R}_1$, we choose $V_p,V_q\in \mathcal{L}$ as minimal as possible such that $\alpha_j\in V_p\oplus V_q$. Then the choice of $V_p$ and $V_q$ are unique. Then we draw directed edges $V_j \to V_p$ and $V_j \to V_q$.

\item  For $V_j=E(\alpha_j)\in\mathcal{R}_2$, we choose $V_p,V_q,V_s\in\mathcal{L}$ as minimal as possible such that $\alpha_j\in V_p\oplus V_q\oplus V_s$. The we draw directed edges $V_j \to V_p$, $V_j \to V_q$, and $V_j \to V_s$.
\end{enumerate}

If the graph obtained from this step is a connected graph on the vertices $V_1,\dots,V_r$, then it is the desired graph. Otherwise, we need to proceed to Step~4. In fact, this happens only for type~$\dyn{D}$ or $\dyn{E}$, and the trivalent vertex is not recovered yet.

\medskip

{\bf Step 4.} Let $\mathcal{V}$ be the set of $V_j$'s contained in some connected component of the graph obtained from Step~3. Then there exists a unique pair of spaces $V_a\in \mathcal{R}_3$ and $V_b\in \mathcal{V}$ such that $V_a\cap V_b$ is one-dimensional. We draw a directed edge from $V_a$ to $V_b$. Note that $V_b$ becomes a trivalent vertex. For $V_a$, we find a remaining space $V_j$ in $\mathcal{R}_3$ such that $V_a\cap V_j$ is one-dimensional, and then draw a directed edge from $V_j$ to $V_a$. We repeat this process until there is no space s$V_k\in\mathcal{R}_3$ satisfying that the dimension of the intersection of $V_k$ with a space in the component containing $V_a$ is one-dimensional.

\medskip

If the graph obtained from this step is a connected graph on the vertices $V_1,\dots,V_r$, then it is the desired graph. Otherwise, we need to proceed to Step~5.

{\bf Step 5.} Let $\tilde{\G}$ be the graph obtained from Step~4. Then $\tilde{\G}$ has two connected components and there exists a unique element $V_k\in \mathcal{R}_3$ such that $V_k$ is not contained in $\tilde{\G}$. We choose $V_p$ and $V_q$ from $\tilde{\G}$ as minimal as possible such that $\alpha_k\in V_p\oplus V_q$. We draw directed edges from $V_k$ to $V_p$ and from $V_k$ to $V_q$.
\end{proof}

\begin{remark}
We cannot extend Theorem~\ref{theo:recover} to other Lie types by simply following the steps considered in its proof. For a simple Lie group $G$ of type~$\dyn{B}_2$, the cohomology ring $H^\ast(X_{s_2s_1})$ is isomorphic to $\Z[x,y]/\langle x^2,y^2\rangle$. Hence, there are two primitive square zero elements $x$ and $y$. This produces a graph consists of two vertices with no edges by following the steps in the proof of Theorem~\ref{theo:recover}. However, the digraph $\G_{s_2s_1}$ consists of two vertices and a directed edge with label~$2$.
\end{remark}

We demonstrate the steps in the proof of Theorem~\ref{thm:dynkin2} in the following examples. 
\begin{example}
    Let $G$ be of type $\dyn{D}$ and $w=s_8s_5 s_4 s_2s_1s_3 s_6s_7s_9$ as in Example~\ref{example1}. We recover $\G_w$ from $H^\ast(X_w)$ as follows.

    {\bf Step 1.} From Example~\ref{example1}, the spaces $V_j$'s are defined as follows.
    \begin{align*}
        &V_1=E(x_2)=\langle x_1,x_2\rangle,\,V_2=\langle x_2\rangle,\, V_3=E(x_2+x_4)=\langle x_2+x_4,x_3\rangle,\\ &V_4=E(x_5)=\langle x_4,x_5\rangle,\,V_5=\langle x_5\rangle,\,V_6=E(x_5)=\langle x_5,x_6\rangle, \\
       & V_7=E(x_6+x_8)=\langle x_6+x_8,x_7\rangle,\,V_8=\langle x_8\rangle,\,V_9=E(x_7)=\langle x_7,x_9\rangle
    \end{align*}

    {\bf Step 2.} We obtain the following connected components.
    \begin{align*}
        &V_1 \to V_2,\\
        &V_4 \to  V_5 \leftarrow V_6,\\
        &V_8
    \end{align*}

    {\bf Step 3.} We have $\mathcal{R}=\{V_3,V_7,V_9\}$ and $\mathcal{L}=\{V_1, V_2, V_4,V_6,V_8\}$. Since $x_2+x_4\in V_2\oplus V_4$ and $x_6+x_8\in V_6\oplus V_8$, we have $\mathcal{R}_1=\{V_3,V_7\}$, $\mathcal{R}_2=\emptyset$, and $\mathcal{R}_3=\{V_9\}$. Combining the connected components obtained from Step 2, we get the following graph.
    \begin{equation*}%\label{eq:space-tri2}
\begin{tikzpicture}[baseline=(current  bounding  box.east),
scale=.4,
terminal/.style={
    % The shape:
    ellipse,
    % The size:
    minimum width=.8cm,
    minimum height=.5cm,
    % Font
    font=\itshape,
},
]
\matrix[row sep=.1cm,column sep=.4cm] {%
    % First row:
     & &  & &  && && && & & & \node [terminal](p8) {$V_8$};\\
    %Second row
     \node [terminal](p1) {$V_1$};& & \node [terminal](p2) {$V_2$}; & &\node [terminal](p3) {$V_3$}; &&\node [terminal](p4) {$V_4$};&  &\node [terminal](p5) {$V_5$};& &\node [terminal](p6) {$V_6$};& &\node [terminal](p7) {$V_7$};&\\
    % Third row:
    % & & && && && & & & & &\node [terminal](p9) {$\langle x_r\rangle$};\\
};
\draw   (p1) edge [->, shorten <=2pt, shorten >=2pt] (p2)
        (p2) edge [<-, shorten <=2pt, shorten >=2pt] (p3);
\draw   (p3) edge [->, shorten <=2pt, shorten >=2pt] (p4)
        (p4) edge [->, shorten <=2pt, shorten >=2pt] (p5)
        (p5) edge [->, shorten <=2pt, shorten >=2pt] (p6)
        (p6) edge [<-, shorten <=2pt, shorten >=2pt] (p7)
        (p8) edge [<-, shorten <=2pt, shorten >=2pt] (p7); 
\end{tikzpicture}
\end{equation*}

{\bf Step 4.} Since $V_7\cap V_9=\langle x_7\rangle$, we obtain the following graph, which is isomorphic to the graph~$\G_w$.
\begin{equation*}%\label{eq:space-tri2}
\begin{tikzpicture}[baseline=(current  bounding  box.east),
scale=.4,
terminal/.style={
    % The shape:
    ellipse,
    % The size:
    minimum width=.8cm,
    minimum height=.5cm,
    % Font
    font=\itshape,
},
]
\matrix[row sep=.1cm,column sep=.4cm] {%
    % First row:
     & &  & &  && && && & & & \node [terminal](p8) {$V_8$};\\
    %Second row
     \node [terminal](p1) {$V_1$};& & \node [terminal](p2) {$V_2$}; & &\node [terminal](p3) {$V_3$}; &&\node [terminal](p4) {$V_4$};&  &\node [terminal](p5) {$V_5$};& &\node [terminal](p6) {$V_6$};& &\node [terminal](p7) {$V_7$};&\\
    % Third row:
     & & && && && & & & & &\node [terminal](p9) {$V_9$};\\
};
\draw   (p1) edge [->, shorten <=2pt, shorten >=2pt] (p2)
        (p2) edge [<-, shorten <=2pt, shorten >=2pt] (p3);
\draw   (p3) edge [->, shorten <=2pt, shorten >=2pt] (p4)
        (p4) edge [->, shorten <=2pt, shorten >=2pt] (p5)
        (p5) edge [->, shorten <=2pt, shorten >=2pt] (p6)
        (p6) edge [<-, shorten <=2pt, shorten >=2pt] (p7)
        (p8) edge [<-, shorten <=2pt, shorten >=2pt] (p7)
        (p9) edge [->, shorten <=2pt, shorten >=2pt] (p7); 
\end{tikzpicture}
\end{equation*}
\end{example}

The following example shows why we need to take $V_p,V_q\in\mathcal{L}$ as minimal as possible in Step 3.

\begin{example}
Let $G$ be of type $\dyn{A}$ and $w=s_7s_8s_4s_5s_6s_2s_1s_3$. Then we have:
    \begin{center}
    $\G_w=$
        \begin{tikzpicture}
            [node/.style={circle,draw, fill=white!20, inner sep = 0.25mm}, baseline = -0.5ex]
            \node[node] (7) at (0,0) {$7$};
            \node[node] (8) at (1,0) {$8$};
            \node[node] (6) at (180:1) {$6$};
            \node[node] (5) at (-2,0) {$5$};
            \node[node] (4) at (-3,0) {$4$};
            \node[node] (3) at (-4,0) {$3$};
            \node[node] (2) at (-5,0) {$2$};
            \node[node] (1) at (-6,0) {$1$};
            \draw[<-] (5) to (6);
            \draw[->] (6) to (7);
            \draw[->] (8) to (7);
            \draw[<-] (4) to (3);
            \draw[<-] (2) to (3);
            \draw[<-] (2) to (1);
            \draw[->] (5) to (4);
        \end{tikzpicture}
    \end{center}
Note that $\alpha_2=\alpha_4=\alpha_7=0$ and $\alpha_1=x_2,\, \alpha_3=x_2+x_4,\, \alpha_5=x_4,\,\alpha_6=x_5+x_7,\,\alpha_8=x_7$. Now we recover $\G_w$ from $H^\ast(X_w)$.

{\bf Step 1.} The spaces $V_j$'s are defined as follows:
\begin{align*}
    &V_1=\langle x_1,x_2\rangle,\,V_2=\langle x_2\rangle,\,V_3=\langle x_3,x_2+x_4\rangle,\\
    &V_4=\langle x_4\rangle,\,V_5=\langle x_4,x_5\rangle,\,V_6=\langle x_6,x_5+x_7\rangle,\\
    &V_7=\langle x_7\rangle,\,V_8=\langle x_7,x_8\rangle
\end{align*}

{\bf Step 2.} We obtain the following connected components.
\begin{align*}
    &V_1 \rightarrow V_2,\\
    &V_4 \leftarrow V_5,\\
    &V_7 \leftarrow V_8.
\end{align*}

{\bf Step 3.} We have $\mathcal{R}=\{V_3,V_6\}$ and $\mathcal{L}=\{V_1,V_2,V_4,V_5,V_7,V_8\}$. Since $V_2\subset V_1$, $V_4\subset V_5$ and $V_7\subset V_8$, for each $V_j\in\mathcal{R}$ there are four possibilities of pairs $V_p$ and $V_q$ satisfying $\alpha_j\in V_p\oplus V_q$. If we choose $V_p$ and $V_q$ as minimal as possible, then $\alpha_3\in V_2\oplus V_4$ and $\alpha_6\in V_5\oplus V_7$, and we can recover the graph $\G_w$ as follows:
\[
V_1 \rightarrow V_2 \leftarrow V_3 \rightarrow V_4 \leftarrow V_5 \leftarrow  V_6 \rightarrow V_7 \leftarrow V_8
\]
\end{example}

In the following example, we need to proceed to Step~5 to recover $\G_w$.
\begin{example}
Let $G$ be of type $\dyn{E}_7$ and $w=s_7s_2s_1s_3s_4s_5s_6$. Then we have:
\begin{center}
    $\G_w=$
        \begin{tikzpicture}
            [node/.style={circle,draw, fill=white!20, inner sep = 0.25mm}, baseline = -0.5ex]
            \node[node] (7) at (0,0) {$7$};
            \node[node] (6) at (180:1) {$6$};
            \node[node] (5) at (-2,0) {$5$};
            \node[node] (4) at (-3,0) {$4$};
            \node[node] (3) at (-4,0) {$3$};
            \node[node] (2) at (-3,1) {$2$};
            \node[node] (1) at (-5,0) {$1$};
            \draw[<-] (5) to (6);
            \draw[->] (6) to (7);
            \draw[<-] (4) to (5);
            \draw[<-] (3) to (4);
            \draw[<-] (2) to (4);
            \draw[<-] (1) to (3);
        \end{tikzpicture}
    \end{center}
    Then $\alpha_1=\alpha_2=\alpha_7=0$ and $\alpha_3=x_1,\,\alpha_4=x_2+x_3,\,\alpha_5=x_4,\,\alpha_6=x_5+x_7$.

    {\bf Step 1.} The spaces $V_j$'s are defined as follows:
    \begin{align*}
        &V_1=\langle x_1\rangle,\, V_2=\langle x_2\rangle,\,V_3=\langle x_1,x_3\rangle,\,V_4=\langle x_2+x_3,x_4\rangle,\\
        &V_5=\langle x_4,x_5\rangle,\,V_6=\langle x_5+x_7,x_6\rangle,\,V_7=\langle x_7\rangle
    \end{align*}
    
    {\bf Step 2.} We obtain the following connected components.
    \[
    V_1 \leftarrow V_3,\quad V_2,\quad V_7
    \]

    {\bf Step 3.} We have $\mathcal{R}=\{V_4,V_5,V_6\}$ and $\mathcal{L}=\{V_1,V_2,V_3,V_7\}$. Since $\alpha_4\in V_2\oplus V_3$, we get $\mathcal{R}_1=\{V_4\}$, $\mathcal{R}_2=\emptyset$, and $\mathcal{R}_3=\{V_5,V_6\}$. In this step, we get the following graph.
    \begin{equation*}
\begin{tikzpicture}[baseline=(current  bounding  box.east),
scale=.4,
terminal/.style={
    % The shape:
    ellipse,
    % The size:
    minimum width=.8cm,
    minimum height=.5cm,
    % Font
    font=\itshape,
},
]
\matrix[row sep=.5cm,column sep=.4cm] {%
    % First row:
     &&& & \node[terminal](p2) {$V_2$}; & &   & & & &&\\
    %Second row
     \node [terminal](p1) {$V_1$};&&\node [terminal](p3) {$V_3$};& & \node [terminal](p4) {$V_4$}; & &\node [terminal](p5) {$~$}; &&\node [terminal](p6) {$~$};&  & \node [terminal](p7) {$V_7$};&\\
};
\draw   (p2) edge [<-, shorten <=2pt, shorten >=2pt] (p4)
        (p1) edge [<-, shorten <=2pt, shorten >=2pt] (p3)
        (p3) edge [<-, shorten <=2pt, shorten >=2pt] (p4); 
\end{tikzpicture}
\end{equation*}

{\bf Step 4} Since $V_4\cap V_5=\langle x_4\rangle$, we have $V_a=V_5$ and $V_b=V_4$. In this step, we get the following graph.
\begin{equation*}
\begin{tikzpicture}[baseline=(current  bounding  box.east),
scale=.4,
terminal/.style={
    % The shape:
    ellipse,
    % The size:
    minimum width=.8cm,
    minimum height=.5cm,
    % Font
    font=\itshape,
},
]
\matrix[row sep=.5cm,column sep=.4cm] {%
    % First row:
     &&& & \node[terminal](p2) {$V_2$}; & &   & & & &&\\
    %Second row
     \node [terminal](p1) {$V_1$};&&\node [terminal](p3) {$V_3$};& & \node [terminal](p4) {$V_4$}; & &\node [terminal](p5) {$V_5$}; &&\node [terminal](p6) {$~$};&  & \node [terminal](p7) {$V_7$};&\\
};
\draw   (p2) edge [<-, shorten <=2pt, shorten >=2pt] (p4)
        (p1) edge [<-, shorten <=2pt, shorten >=2pt] (p3)
        (p3) edge [<-, shorten <=2pt, shorten >=2pt] (p4)
        (p4) edge [<-, shorten <=2pt, shorten >=2pt] (p5); 
\end{tikzpicture}
\end{equation*}

{\bf Step 5.} Note that $\alpha_6\in V_5\oplus V_7$, and finally we can recover the graph $\G_w$ as follows.
\begin{equation*}
\begin{tikzpicture}[baseline=(current  bounding  box.east),
scale=.4,
terminal/.style={
    % The shape:
    ellipse,
    % The size:
    minimum width=.8cm,
    minimum height=.5cm,
    % Font
    font=\itshape,
},
]
\matrix[row sep=.5cm,column sep=.4cm] {%
    % First row:
     &&& & \node[terminal](p2) {$V_2$}; & &   & & & &&\\
    %Second row
     \node [terminal](p1) {$V_1$};&&\node [terminal](p3) {$V_3$};& & \node [terminal](p4) {$V_4$}; & &\node [terminal](p5) {$V_5$}; &&\node [terminal](p6) {$V_6$};&  & \node [terminal](p7) {$V_7$};&\\
};
\draw   (p2) edge [<-, shorten <=2pt, shorten >=2pt] (p4)
        (p1) edge [<-, shorten <=2pt, shorten >=2pt] (p3)
        (p3) edge [<-, shorten <=2pt, shorten >=2pt] (p4)
        (p4) edge [<-, shorten <=2pt, shorten >=2pt] (p5)
        (p5) edge [<-, shorten <=2pt, shorten >=2pt] (p6)
        (p7) edge [<-, shorten <=2pt, shorten >=2pt] (p6); 
\end{tikzpicture}
\end{equation*}
\end{example}

%\bibliographystyle{plain}
%\bibliography{ref}

\end{document}